\documentclass[a4paper]{amsart}

\usepackage[T1]{fontenc}
\usepackage{amsmath, amssymb, amsthm, mathrsfs, mathtools, graphicx}
\usepackage{varioref}
\usepackage[backref=page]{hyperref}
\usepackage{enumitem, xspace, ifthen, comment}
\usepackage[all]{xy}
\usepackage[nameinlink, capitalize]{cleveref}
\usepackage[dvipsnames]{xcolor}
\usepackage{tikz}


\setcounter{tocdepth}{1}

\hyphenation{Gro-then-dieck}
\hyphenation{mani-fold}

\setlist[enumerate]{
  label=(\thethm.\arabic*),
  before={\setcounter{enumi}{\value{equation}}},
  after={\setcounter{equation}{\value{enumi}}},
  itemsep=1ex
}

\setlist[itemize]{
  leftmargin=*,
  topsep=1ex,
  itemsep=1ex,
  label=$\circ$
}



\newtheorem*{thm-plain}{Theorem}
\newtheorem{thm}{Theorem}[section]
\newtheorem{lem}[thm]{Lemma}
\newtheorem{prp}[thm]{Proposition}
\newtheorem{cor}[thm]{Corollary}

\numberwithin{equation}{thm}

\newtheorem{bigthm}{Theorem}
\newtheorem{bigcor}[bigthm]{Corollary}

\theoremstyle{definition}
\newtheorem{dfn}[thm]{Definition}

\newtheorem*{dfn-plain}{Definition}

\theoremstyle{remark}
\newtheorem{clm}[thm]{Claim}

\newtheorem{awlog}[thm]{Additional Assumption}

\newtheorem{setup}[thm]{Setup}
\newtheorem{rem}[thm]{Remark}

\newtheorem{exm}[thm]{Example}
\newtheorem*{rem-plain}{Remark}



\newcommand{\inv}{^{-1}}
\newcommand{\from}{\colon}

\newcommand{\imp}{\Rightarrow}
\newcommand{\lto}{\longrightarrow}

\newcommand{\x}{\times}
\newcommand{\inj}{\hookrightarrow}

\newcommand{\bij}{\xrightarrow{\,\smash{\raisebox{-.5ex}{\ensuremath{\scriptstyle\sim}}}\,}}
\newcommand{\isom}{\cong}
\newcommand{\defn}{\coloneqq}

\newcommand{\tensor}{\otimes}

\newcommand{\wt}{\widetilde}
\newcommand{\wb}{\overline}
\newcommand{\wh}{\widehat}

\renewcommand{\d}{\mathrm d}
\newcommand{\del}{\partial}

\newcommand{\ddc}{dd^c}

\newcommand{\dual}{^{\smash{\scalebox{.7}[1.4]{\rotatebox{90}{\textup\guilsinglleft}}}}}
\newcommand{\ddual}{^{\smash{\scalebox{.7}[1.4]{\rotatebox{90}{\textup\guilsinglleft} \hspace{-.5em} \rotatebox{90}{\textup\guilsinglleft}}}}}
\newcommand{\acts}{\ \rotatebox[origin=c]{-90}{\ensuremath{\circlearrowleft}}\ }

\newcommand{\factor}[2]{\left. \raise 2pt\hbox{$#1$} \right/\hskip -2pt \raise -2pt\hbox{$#2$}}

\newdir{ ir}{{}*!/-5pt/@^{(}} 
\newdir{ il}{{}*!/-5pt/@_{(}} 

\DeclareMathOperator{\img}{im}

\DeclareMathOperator{\rk}{rk}

\DeclareMathOperator{\Aut}{Aut}


\newcommand{\set}[1]{\left\{ #1 \right\}}

\def\rd#1.{\lfloor{#1}\rfloor}
\def\rp#1.{\lceil{#1}\rceil}
\def\tw#1.{\langle{#1}\rangle}

\newcommand{\la}{\langle}
\newcommand{\ra}{\rangle}


\renewcommand{\O}[1]{\mathscr{O}_{#1}}
\newcommand{\Omegap}[2]{\Omega_{#1}^{#2}}
\newcommand{\Omegar}[2]{\Omega_{#1}^{[#2]}}
\newcommand{\T}[1]{\mathscr{T}_{#1}}
\newcommand{\can}[1]{\omega_{#1}}

\newcommand{\Reg}[1]{{#1}_{\mathrm{reg}}}
\newcommand{\Sing}[1]{{#1}_{\mathrm{sg}}}

\newcommand{\codim}[2]{\mathrm{codim}_{#1}(#2)}

\newcommand{\cc}[2]{\mathrm{c}_{#1}(#2)}

\def\Hnought#1.#2.{\mathit{\Gamma} \!\left( #1, #2 \right)}
\def\HH#1.#2.#3.{\mathrm{H}^{#1} \!\left( #2, #3 \right)}
\def\hh#1.#2.#3.{h^{#1} \!\left( #2, #3 \right)}
\def\RR#1.#2.#3.{R^{#1} #2_* #3}
\def\HHc#1.#2.#3.{\mathrm{H}_{\mathrm{c}}^{#1} \!\left( #2, #3 \right)}
\def\Hh#1.#2.#3.{\mathrm{H}_{#1} \!\left( #2, #3 \right)}
\def\Hom#1.#2.{\mathrm{Hom} \!\left( #1, #2 \right)}
\def\End#1.{\mathrm{End} \!\left( #1 \right)}
\def\sHom#1.#2.{\mathscr{H}\!om \!\left( #1, #2 \right)}
\def\Ext#1.#2.#3.{\mathrm{Ext}^{#1} \!\left( #2, #3 \right)}
\def\sExt#1.#2.#3.{\mathscr{E}\!xt^{#1} \!\left( #2, #3 \right)}
\def\Link#1.#2.{\mathrm{Link} \!\left( #1, #2 \right)}

\newcommand{\piet}[1]{\hat\pi_1(#1)}

\newcommand{\GL}[2]{\mathrm{GL}(#1, #2)}

\newcommand{\kahler}{K{\"{a}}hler\xspace}

\newcommand{\lt}{locally trivial\xspace}
\newcommand{\gkp}{maximally quasi-\'etale\xspace}
\newcommand{\qe}{quasi-\'etale\xspace}
\newcommand{\maxi}{\mathrm{max}}

\DeclareMathOperator{\Autn}{Aut^\circ}

\DeclareMathOperator{\Exc}{Exc}

\DeclareMathOperator{\Alb}{Alb}
\DeclareMathOperator{\alb}{alb}

\DeclareMathOperator{\Ric}{Ric}
\DeclareMathOperator{\tr}{tr}


\newcommand{\eps}{\varepsilon}
\renewcommand{\theta}{\vartheta}
\renewcommand{\phi}{\varphi}
\newcommand{\vp}{\varphi}
\newcommand{\om}{\omega}
\newcommand{\omte}{\omega_{t, \eps}}


\newcommand{\N}{\ensuremath{\mathbb N}}
\newcommand{\Z}{\ensuremath{\mathbb Z}}
\newcommand{\Q}{\ensuremath{\mathbb Q}}
\newcommand{\R}{\ensuremath{\mathbb R}}
\newcommand{\C}{\ensuremath{\mathbb C}}

\newcommand{\bS}{\ensuremath{\mathbb S}}



\newcommand{\sA}{\mathscr A}  
 \newcommand{\sE}{\mathscr E} \newcommand{\sF}{\mathscr F}
\newcommand{\sG}{\mathscr G}

\newcommand{\sS}{\mathscr S} \newcommand{\sT}{\mathscr T}





\definecolor{forrest}{RGB}{81,133,49}
\definecolor{mydarkblue}{RGB}{10,92,153}





\title[K{\"{A}}hler spaces with zero first Chern class]{K{\"{A}}hler spaces with zero first Chern class: Bochner principle, Albanese map and fundamental groups}


\author[Claudon]{Beno\^it Claudon}
\address{Univ Rennes, CNRS, IRMAR --- UMR 6625, F--35000 Rennes, France et Institut Universitaire de France}
\email{\href{mailto:benoit.claudon@univ-rennes1.fr}{benoit.claudon@univ-rennes1.fr}}
\urladdr{\href{https://perso.univ-rennes1.fr/benoit.claudon/}{perso.univ-rennes1.fr/benoit.claudon/}}

\author[Graf]{Patrick Graf}
\address{Lehrstuhl f\"ur Mathematik I, Universit\"at Bayreuth, 95440 Bayreuth, Germany}
\email{\href{mailto:patrick.graf@uni-bayreuth.de}{patrick.graf@uni-bayreuth.de}}
\urladdr{\href{http://www.pgraf.uni-bayreuth.de/en/}{www.graficland.uni-bayreuth.de}}

\author[Guenancia]{Henri Guenancia}
\address{Institut de Math\'ematiques de Toulouse, Universit\'e Paul Sabatier, 31062 Toulouse Cedex 9, France}
\email{\href{mailto:henri.guenancia@math.cnrs.fr}{henri.guenancia@math.cnrs.fr}}
\urladdr{\href{https://hguenancia.perso.math.cnrs.fr/}{hguenancia.perso.math.cnrs.fr/}}

\author[Naumann]{Philipp Naumann}
\address{Lehrstuhl f\"ur Mathematik VIII, Universit\"at Bayreuth, 95440 Bayreuth, Germany}
\email{\href{mailto:philipp.naumann@uni-bayreuth.de}{philipp.naumann@uni-bayreuth.de}}
\urladdr{\href{https://www.mathe8.uni-bayreuth.de/de/team/naumann/index.php}{www.pnaumann.uni-bayreuth.de}}

\date{\today}
\thanks{H.G.~was partially supported by the ANR project GRACK.
B.C.~was partially supported by the ANR projects Foliage ANR--16--CE40--0008 and Hodgefun ANR--16--CE40--0011.}
\keywords{\kahler spaces, klt singularities, vanishing first Chern class, holonomy, Albanese map, fundamental groups, Kodaira problem, decomposition theorem}
\subjclass[2010]{32J27, 14E30, 14J32}

\hypersetup{
  pdfauthor={Benoit Claudon, Patrick Graf, Henri Guenancia and Philipp Naumann},
  pdftitle={Kahler spaces with zero first Chern class},
  pdfkeywords={\kahler spaces, klt singularities, vanishing first Chern class, holonomy, Albanese map, fundamental groups, Kodaira problem, decomposition theorem},
  pdfstartview={Fit},
  pdfpagelayout={OneColumn},
  pdfpagemode={UseNone},
  linktoc=all,
  breaklinks,
  linkcolor=[RGB]{0 0 96},
  citecolor=[RGB]{96 0 0},
  urlcolor=[RGB]{0 96 0},
  colorlinks}

\begin{document}

\begin{abstract}
Let $X$ be a compact \kahler space with klt singularities and vanishing first Chern class.
We prove the Bochner principle for holomorphic tensors on the smooth locus of $X$: any such tensor is parallel with respect to the singular Ricci-flat metrics.
As a consequence, after a finite quasi-\'etale cover $X$ splits off a complex torus of the maximum possible dimension.
We then proceed to decompose the tangent sheaf of $X$ according to its holonomy representation.
In particular, we classify those $X$ which have strongly stable tangent sheaf: up to quasi-\'etale covers, these are either irreducible Calabi--Yau or irreducible holomorphic symplectic. As an application of these results, we show that if $X$ has dimension four, then it satisfies Campana's Abelianity Conjecture.
\end{abstract}

\maketitle

\begingroup
\hypersetup{linkcolor=black}
\tableofcontents
\endgroup

\section{Introduction}

Let $X$ be a compact \kahler manifold such that $\cc1X = 0 \in \HH2.X.\R.$.
The celebrated Beauville--Bogomolov Decomposition Theorem states that $X$ admits an \'etale cover that splits as a product of a complex torus, Calabi--Yau manifolds and irreducible holomorphic symplectic manifolds, \emph{cf.}~\cite{Bea83}.
The proof relies in a crucial way on Yau's solution of the Calabi conjecture~\cite{Yau78} that enables one to equip $X$ with a \kahler, Ricci-flat metric $\om$.
From there, one can use the powerful tools of differential geometry (de Rham's splitting theorem, Cheeger--Gromoll core theorem, Berger--Simons holonomy classification and Bieberbach theorem) to exhibit the sought cover.

Over the last few decades, the tremendous advances of the (algebraic) Minimal Model Program in birational geometry have highlighted the importance to understand and classify varieties with mild singularities. By variety, we mean here either a complex projective variety or a compact \kahler space.

It is in this context that a lot of attention has been drawn in recent years towards generalizing the Beauville--Bogomolov Decomposition Theorem to compact \kahler spaces $X$ with klt singularities and trivial first Chern class. A first important step in that direction was made by~\cite{EGZ} who generalized Yau's theorem and constructed singular \kahler--Einstein metrics $\omega$ on such varieties $X$. Unfortunately, these metrics viewed on $\Reg X$ are geodesically incomplete, preventing most of the classical results in differential geometry from applying. However, breakthroughs relying on the algebraic MMP (\emph{e.g.}~\cite{GKKP11, Xu14, GKP16}) and the theory of algebraic foliations~\cite{Dru16} allowed for a better understanding of projective varieties with klt singularities and trivial first Chern class. Down the line, this enabled~\cite{GGK} to compute the holonomy of the singular Ricci-flat metrics $\omega$ and shortly after, H\"oring and Peternell gave a proof of the decomposition theorem in the projective setting, based on an algebraic integrability result for vector bundles~\cite{HP}.

Unfortunately, these spectacular results leave the \kahler case of the decomposition theorem largely open, which may sound odd in light of the fact that the proof in the smooth case relies entirely on transcendental methods!
In this paper, we prove the \kahler version of several building blocks that underly the proof of the decomposition theorem in the projective case.

\subsection*{Bochner principle and Albanese map}

As a first step towards the above goal, we prove the so-called \emph{Bochner principle}.

\begin{bigthm}[Bochner principle, \cref{bochner}] \label{intro:bochner}
Let $(X, \omega_X)$ be a normal compact \kahler space with klt singularities such that $\cc1X = 0 \in \HH2.X.\R.$ and let $\omega \in \{ \omega_X \}$ be the singular Ricci-flat metric.
Let $p, q \ge 0$ be non-negative integers.
Then any holomorphic tensor
\[ \tau \in \HH0.\Reg X.\T X^{\otimes p} \otimes \Omega_X^{\otimes q}. \]
is parallel with respect to $\omega$ on $\Reg X$.
\end{bigthm}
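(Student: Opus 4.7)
The plan is to carry out the classical Weitzenb\"ock--Bochner technique on the smooth open set $\Reg X$ and then pass to a global conclusion despite the incompleteness of the singular Ricci-flat metric near $\Sing X$. On a compact smooth \kahler manifold with $\Ric(\om) = 0$, a direct application of Weitzenb\"ock yields $\Delta_\om |\tau|^2 = |\nabla \tau|^2$ for any holomorphic tensor $\tau$ of type $(p,q)$---the curvature term on the right-hand side being a contraction of $\Ric(\om)$, hence zero---from which $\nabla \tau = 0$ follows by integration over the compact manifold. In our setting the very same Weitzenb\"ock identity holds pointwise on $\Reg X$, so that $u := |\tau|^2_\om$ is smooth and $\om$-subharmonic on $\Reg X$; the entire difficulty is to establish the integrated conclusion $\int_{\Reg X} |\nabla \tau|^2_\om \, \om^n = 0$ despite the fact that $\Reg X$ is not compact for $\om$.

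To achieve this, I would approximate $\om$ by smooth \kahler metrics on a resolution of $X$. Fix a log resolution $\pi \from \wt X \to X$ with exceptional set $E$ and solve an auxiliary family of Monge--Amp\`ere equations of Eyssidieux--Guedj--Zeriahi type, producing smooth \kahler metrics $\omte$ on $\wt X$ that converge to $\pi^* \om$ in the $C^\infty_{\mathrm{loc}}$ topology on $\wt X \setminus E$, with Ricci curvature tending to zero uniformly and with uniform $L^\infty$-bounds on the potentials. For each $(t,\eps)$, the smooth Weitzenb\"ock formula applied to the pullback $\pi^* \tau$ and the metric $\omte$ produces an approximate Bochner identity with an error term controlled by $\Ric(\omte)$. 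Multiplying by a cutoff $\chi_\delta$ that equals $1$ outside a small tubular neighbourhood of $E$, vanishes in a smaller one, and whose differential has small $L^2$-norm---which is possible since $E$ has real codimension at least two---then integrating by parts over $\wt X$, and letting first $\delta \to 0$ and then $(t,\eps) \to 0$, one obtains $\int_{\Reg X} |\nabla \tau|^2_\om \, \om^n = 0$ as required, whence $\nabla \tau = 0$.

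The main technical obstacle lies in obtaining uniform control of $|\tau|^2_{\omte}$ and of its gradient in a neighbourhood of $E$, as required to make the cutoff limit rigorous. Because $X$ has klt singularities, the tensor $\tau$ extends reflexively across $\Sing X$, and its pullback to $\wt X$ acquires only mild poles along $E$ whose order is governed by the discrepancies of $\pi$. On the other hand, the $L^\infty$-estimate for the Monge--Amp\`ere potentials forces $\omte$ to degenerate along $E$ in a compensating manner. Matching these two effects so as to obtain uniform $L^1$- or $L^2$-bounds on $|\tau|^2_{\omte}$ and on $d|\tau|^2_{\omte}$ near $E$ requires a careful local analysis combining the \cite{EGZ} estimates with the structure of klt singularities; this is the heart of the argument, and once it is in place Fatou's lemma together with the $C^\infty_{\mathrm{loc}}$ convergence of $\omte$ to $\pi^*\om$ on $\wt X \setminus E$ yield the claim.
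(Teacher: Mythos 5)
Your outline follows the same broad strategy as the paper (approximate K\"ahler--Einstein metrics on a resolution, an integrated Bochner identity, Fatou's lemma in the limit), but it has a genuine gap exactly at the point you yourself flag as ``the heart of the argument'', and the way you propose to close it would not work. First, a factual problem: the Ricci curvature of the approximating metrics cannot ``tend to zero uniformly''. Since $K_Y=\pi^*K_X+\sum_j a_jF_j$ and $\cc1X=0$, the Ricci class of any K\"ahler metric on the resolution is $-\sum_j a_j \cc1{F_j}\neq 0$ in general; in the paper one solves $\Ric\omte=-\sum_j a_j\theta_{j,\eps}$, and as $\eps\to0$ this converges to the current $-\sum_j a_j[F_j]$ concentrated on the exceptional divisor. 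Estimating the resulting curvature contribution (via the splitting $\theta_{j,\eps}=\beta_{j,\eps}+\gamma_{j,\eps}$ and the integral bounds of \cite[Claim~9.5]{GGK}) is an essential, nontrivial part of the proof that your sketch does not address.

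Second, and more seriously, the quantities you propose to integrate are not controlled. The tensor $\tau$ pulls back with genuine poles along $E$, so $|\tau|^2_{\omte}$ and $|\nabla\tau|^2_{\omte}$ are unbounded there and the uniform $L^1$/$L^2$ bounds near $E$ that your cutoff argument requires simply fail in general; moreover the error term $\int\nabla\chi_\delta\cdot\nabla|\tau|^2$ from integration by parts needs exactly the missing gradient bound. The paper's mechanism for circumventing this is what is absent from your proposal: one regards $\pi^*\tau$ as a \emph{global holomorphic section} $\sigma$ of the twisted bundle $\sE_Y\otimes\O Y(kF)$ on the compact manifold $Y$, equips it with the smooth metric $h_{\omte}\otimes h_{F,\eps}^{\otimes k}$, and applies the Poincar\'e--Lelong/Bochner identity not to $|\sigma|^2$ but to the \emph{normalized} quantity $\log(|\sigma|^2+1)$. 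All curvature terms then appear divided by $|\sigma|^2+1$ and become uniformly integrable, and the identity can be integrated over the compact $Y$ by Stokes with no cutoff at all. Without this renormalization (or a genuine substitute for it), the limiting argument you describe does not go through.
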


Quite generally, a sacrilegious bottleneck of the theory in its current state is that if $X$ is klt, we have no way of comparing the fundamental group of the smooth locus $\pi_1(\Reg X)$ to the whole $\pi_1(X)$.
This is important because differential-geometric methods only control the former group.
The issue is that the algebraic results of~\cite{GKP16} about \emph{\gkp covers} are not yet available for complex spaces.
See \cref{rem:existence GKP cover} for an in-depth discussion.

In fact, a large part of the present work is devoted to finding ways to bypass the above-mentioned lamentable limitation of the literature.
This can already be seen in \cref{intro:bochner}: even though its statement is a straightforward generalization of~\cite[Theorem~A]{GGK}, the proof is quite different because it needs to avoid the use of \gkp covers.
We refer to \cref{comparison} for a more thorough comparison of the two results.

Our next theorem is about the Albanese map of \kahler spaces $X$ as above: $\alb_X$ is surjective and after a finite \'etale base change, it becomes globally trivial (\cref{alb splits}).
This generalizes~\cite[Theorem~8.3]{Kaw85} from the projective case, but the proof relies on the Bochner principle.
An important consequence is the existence of so-called \emph{torus covers}.

\begin{bigthm}[Torus covers, \cref{torus cover}] \label{intro:tc}
Let $X$ be a normal compact \kahler space with klt singularities such that $\cc1X = 0 \in \HH2.X.\R.$.
Then there exist normal compact \kahler spaces $T$ and $Z$ with canonical singularities together with a \qe cover $\gamma \from T \x Z \lto X$ such that:
\begin{itemize}
\item $T$ is a complex torus of dimension $\wt q(X)$.
\item The canonical sheaf of $Z$ is trivial, $\can Z \isom \O Z$.
\item The augmented irregularity of $Z$ vanishes, $\wt q(Z) = 0$.
\end{itemize}
\end{bigthm}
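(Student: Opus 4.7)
The plan is to first pass to a \qe cover on which the canonical sheaf is trivial and the irregularity is maximal, and then invoke the Albanese splitting theorem (\cref{alb splits}). Since $X$ is klt with $\cc1X = 0$, the canonical sheaf $\can X$ is torsion, so the cyclic index-one cover yields a \qe map $X_1 \to X$ with $\can{X_1} \isom \O{X_1}$. On the other hand, \cref{intro:bochner} applied to holomorphic $1$-forms shows that on any \qe cover $W \to X$ the space $\HH0.\Reg W.\Omega_W.$ consists of parallel sections, and hence has dimension at most $\dim X$. Therefore $\wt q(X) < \infty$, and we may choose a \qe cover $X_2 \to X_1$ with $q(X_2) = \wt q(X_1) = \wt q(X)$. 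Since $X_2 \to X_1$ is \qe, reflexivity of the canonical sheaf preserves $\can{X_2} \isom \O{X_2}$.

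The space $X_2$ remains compact \kahler klt with $\cc1{X_2} = 0$, so \cref{alb splits} applied to it produces a finite \'etale cover $\nu \from Y \to X_2$ and an isomorphism $Y \isom T \x Z$, where $T = \Alb(Y)$ is a complex torus and $Z$ is the corresponding fiber. The composition $\gamma \from T \x Z = Y \lto X_2 \lto X_1 \lto X$ is then \qe. For the finite \'etale $\nu$, pullback of holomorphic $1$-forms is injective, so $q(Y) \geq q(X_2) = \wt q(X)$; conversely $q(Y) \leq \wt q(Y) = \wt q(X)$ since $Y \to X$ is \qe. Hence $\dim T = q(Y) = \wt q(X)$. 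As $\can{X_2}$ and $\can T$ are trivial, so is $\can Y$, which gives $\can Z \isom \O Z$ via the product decomposition, and also $q(Z) = q(Y) - \dim T = 0$. To see $\wt q(Z) = 0$: any \qe cover $Z' \to Z$ with $q(Z') > 0$ would yield a \qe cover $T \x Z' \to X$ of irregularity $\dim T + q(Z') > \wt q(X)$, contradicting the definition of $\wt q$. Finally, normality, the \kahler property, and klt (hence canonical, since $\can Z$ is now trivial Cartier) all pass from $Y = T \x Z$ to $Z$.

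The main obstacle I expect is in the first step: showing that the supremum $\wt q(X)$ is actually \emph{attained} by some \qe cover. One must combine the Bochner dimension bound with a tower argument over strictly increasing irregularities and ensure termination within the \kahler category, since the \gkp covers of~\cite{GKP16} discussed in the introduction are not yet available in the \kahler setting.
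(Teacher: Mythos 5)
Your argument is correct and follows essentially the same route as the paper's proof of \cref{torus cover}: pass to the index-one cover, then to a quasi-\'etale cover realizing the augmented irregularity, then apply \cref{alb splits}; the only cosmetic difference is that you bound $\wt q(X) \le \dim X$ via parallel reflexive $1$-forms from the Bochner principle, whereas the paper reads the same bound off the surjectivity of the Albanese map established in \cref{alb splits}. The ``attainment'' obstacle you flag at the end is vacuous: once $\wt q(X) \le \dim X$ is known, the irregularities of quasi-\'etale covers form a nonempty set of non-negative integers bounded above, so the maximum is attained by some single cover and no tower or termination argument is required.
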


\noindent
For the definition of the augmented irregularity $\wt q(X)$, see \cref{dfn irreg}.
Also, we remark that the torus cover is essentially unique by \cref{uniqueness torus cover}.

Direct applications of \cref{intro:tc} include an alternative proof of the Abundance Conjecture in the setting of compact \kahler spaces with \emph{canonical} singularities and trivial first Chern class (\cref{abundance c1=0}) and a characterization of torus quotients in terms of the augmented irregularity (\cref{tq irreg}).
Concerning the further study of klt \kahler spaces with trivial first Chern class, \cref{intro:tc} enables us to reduce most questions to the case of canonical singularities, trivial canonical bundle and, most importantly, vanishing augmented irregularity.

\subsection*{Holonomy and the flat factor}

The natural next step is to understand the holonomy of the Ricci-flat metrics on $\Reg X$.
This leads to a decomposition of the tangent sheaf of $X$ that already reflects the conjectural Beauville--Bogomolov decomposition on an infinitesimal level.
The missing last steps are the algebraic integrability of the summands and a splitting theorem for such foliations.

\begin{bigthm}[Holonomy covers, p.~\pageref{proof theorem holonomy}] \label{intro:hol}
Let $(X, \omega_X)$ be as in \cref{intro:bochner}.
Then after replacing $X$ by a finite \qe cover, there exists a direct sum decomposition of the tangent sheaf of $X$,
\[ \T X = \sF \oplus \bigoplus_{k \in K} \sE_k, \]
where the reflexive sheaves $\sF$ and $\sE_k$ satisfy the following:
\begin{itemize}
\item The sheaves $\sF$ and $\sE_k$ are foliations with trivial determinant.
\item The sheaf $\sF\big|_{\Reg X}$ is flat. More precisely, it is given by a special unitary representation of $\pi_1(\Reg X)$.
\item Each factor $\sE_k\big|_{\Reg X}$ is parallel and has full holonomy group either $\mathrm{SU}(n_k)$ or $\mathrm{Sp}(n_k/2)$, with respect to the pullback of the singular Ricci-flat metric $\omega$.
Here, $n_k=\mathrm{rk}(\sE_k)$.
Moreover, $\sE_k$ is strongly stable with respect to any \kahler class.
\item After passing to a torus cover $\gamma \from T \x Z \to X$ as in \cref{intro:tc}, the tangent sheaf of $T$ becomes a direct summand of $\gamma^{[*]} \sF$.
If $X$ is locally algebraic, then we have equality $\mathrm{pr}_T^* \T T = \gamma^{[*]} \sF$.
\end{itemize}
\end{bigthm}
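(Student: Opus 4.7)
The plan is to convert the differential-geometric holonomy decomposition of the singular Ricci-flat metric $\omega$ on $\Reg X$ into a sheaf-theoretic decomposition of $\T X$, using \cref{intro:bochner} as the bridge between parallel and holomorphic tensors. Fix a basepoint $x_0 \in \Reg X$ and let $H^0 \subset \mathrm{SU}(T_{x_0}X)$ denote the restricted holonomy group of $\omega$ (Ricci-flatness forces the inclusion into $\mathrm{SU}$). Decompose
\[ T_{x_0} X = F_0 \oplus \bigoplus_{k \in K} E_{k, 0} \]
into irreducible $H^0$-modules, with $F_0$ gathering all trivial summands. Parallel transport of the corresponding orthogonal projections produces parallel endomorphisms of $T \Reg X$; they are parallel with respect to the Chern connection (which agrees with the Levi-Civita connection in the K\"ahler setting), hence $\bar\partial$-closed, hence holomorphic. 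Reflexive extension across the codimension $\geq 2$ singular locus then yields a direct sum decomposition $\T X = \sF \oplus \bigoplus_{k \in K} \sE_k$ of reflexive coherent sheaves on $X$.

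To make this construction valid on $X$ itself, one first has to pass from the full holonomy to its identity component. I would take the finite \'etale cover of $\Reg X$ corresponding to the monodromy homomorphism $\pi_1(\Reg X) \surj H/H^0$ and normalize $X$ in the corresponding function field extension. The result is a finite \qe cover $\wt X \to X$, which remains klt with $\cc1{\wt X} = 0$, and on which the holonomy of the pulled-back Ricci-flat metric equals $H^0$. This step is one of the workarounds forced upon us by the absence of \gkp covers in the analytic category, and replacing $X$ by $\wt X$ aligns us with the statement of the theorem.

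The claimed properties of the summands follow from combining standard holonomy arguments with \cref{intro:bochner}. Parallel holomorphic distributions on a K\"ahler manifold are involutive (the bracket of parallel local sections is parallel, by torsion-freeness of the Levi-Civita connection), so $\sF$ and $\sE_k$ are foliations; the inclusion $H^0 \subset \mathrm{SU}$ yields trivial determinants after reflexive extension; the flat factor $\sF|_{\Reg X}$ has trivial holonomy by construction, so it arises from a representation of $\pi_1(\Reg X)$ into a special unitary group; and Berger's classification of irreducible Ricci-flat K\"ahler holonomies---combined with the fact that the only Ricci-flat K\"ahler symmetric spaces are flat tori, already absorbed into $\sF$---restricts the holonomy on each $\sE_k|_{\Reg X}$ to $\mathrm{SU}(n_k)$ or $\mathrm{Sp}(n_k/2)$. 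Strong stability of $\sE_k$ follows from \cref{intro:bochner} applied to tensor powers: any destabilizing reflexive subsheaf of a tensor power of $\sE_k$ restricts to a holomorphic, hence parallel, subsheaf on $\Reg X$, and irreducibility of the $H^0$-action forces it to be zero or everything. For the final item, let $\gamma \from T \x Z \to X$ be the torus cover of \cref{intro:tc}: the globally parallel vector fields on $T$ pull back to parallel holomorphic sections of $\gamma^{[*]} \T X$, and since each $\sE_k$ has no non-zero parallel sections, these sections sit in $\gamma^{[*]} \sF$, giving $\mathrm{pr}_T^* \T T$ as a direct summand. In the locally algebraic case, the algebraic \gkp theory applied to the projective klt variety $Z$ (whose augmented irregularity vanishes) shows, after a further \qe cover, that $\T Z$ has no flat summand, forcing the equality $\mathrm{pr}_T^* \T T = \gamma^{[*]} \sF$.

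The principal obstacle is precisely the ``sacrilegious bottleneck'' flagged in the introduction: the lack of \gkp covers in the analytic setting. Concretely, each time one naturally wishes to pass to a cover trivializing the algebraic fundamental group of $X$ in codimension one, one must instead construct the required cover by hand from differential-geometric data and verify that it is \qe in the K\"ahler-analytic sense. The most delicate points, in my view, are first to produce the cover realizing $H/H^0$ as a genuine \qe cover of $X$ (rather than only of $\Reg X$) with the pulled-back Ricci-flat metric still being the canonical one, and second, in the locally algebraic case, to reconcile the differential-geometric flat factor $\sF|_{\Reg X}$---which a priori only sees $\pi_1(\Reg X)$---with the algebraic structure of $\sF$ on $X$, in order to conclude the equality on the torus cover.
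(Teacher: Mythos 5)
Your overall strategy --- decompose $T_{x_0}X$ under the holonomy, use the Bochner principle to turn the parallel projections into holomorphic endomorphisms, extend reflexively across $\Sing X$, then read off the properties of each summand --- is the same as the paper's. But there is a genuine gap at the single most delicate point: you assert, without argument, that $H/H^0$ is \emph{finite}, both when you ``take the finite \'etale cover of $\Reg X$ corresponding to $\pi_1(\Reg X) \twoheadrightarrow H/H^0$'' and when you invoke Berger's classification to pin down the holonomy of $\sE_k\big|_{\Reg X}$. Berger--Simons classifies only the \emph{restricted} holonomy $G_k^\circ$, whereas the theorem asserts that the \emph{full} holonomy is $\mathrm{SU}(n_k)$ or $\mathrm{Sp}(n_k/2)$; the obstruction is exactly the quotient $G_k/G_k^\circ$, which is a priori a finitely generated subgroup of $\mathrm{U}(1)$ and hence possibly infinite. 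Since $\Reg X$ is non-compact and the Ricci-flat metric is incomplete, no classical compactness or Cheeger--Gromoll-type argument applies. The paper's route is: first pass to the \emph{weak} holonomy cover of \cite[Proposition~7.3]{GGK}, which does not presuppose finiteness and only arranges that the $G$-decomposition refines the $G^\circ$-decomposition; then observe that $G_k/G_k^\circ \subset \mathrm{U}(1)$ is abelian; then realize $G_k$ inside the holonomy of the factor $Z$ of a torus cover with $\wt q(Z)=0$ and apply the vanishing \cref{lem:virab} (which rests on Deligne's mixed Hodge theory for $\HH1.\Reg Z.\C.$) to the surjection $\pi_1(\Reg Z) \twoheadrightarrow \factor{G_k}{G_k^\circ}$ to conclude finiteness; only then does a further finite \qe cover kill $G_k/G_k^\circ$. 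Your proposal needs this entire chain, and note that it makes the holonomy statement logically dependent on \cref{intro:tc}, which your write-up treats as an input only for the last bullet point.

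Two smaller points. For strong stability with respect to an \emph{arbitrary} K\"ahler class $\beta$, the Bochner principle for bundles only makes $\omega_\alpha$-slope-zero saturated subsheaves parallel; a $\beta$-destabilizing subsheaf need not have $\alpha$-slope zero, and the paper needs an additional polystability argument to reduce the general class $\beta$ to the class $\alpha$ of the given metric. And in the locally algebraic case, $Z$ is a compact K\"ahler space with locally algebraic singularities, not a projective variety; what is actually used is that local algebraicity yields finiteness of the local \'etale fundamental groups of the links, hence the existence of a \gkp cover, hence \cref{prop:holonomy flat factor}.
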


As far as the flat factor $\sF$ is concerned, \cref{intro:hol} only provides partial information.
We do however have a complete understanding of $X$ in the case where the other summands $\sE_k$ vanish.
In particular, in this case we know that $\mathrm{pr}_T^* \T T = \gamma^{[*]} \sF$ without making any algebraicity assumptions:

\begin{bigthm}[Characterization of torus quotients, \cref{local flat tangent} and \cref{flat tangent implies torus}] \label{intro:torusquotient}
Let $X$ be a normal complex space with klt singularities.
If $\T{\Reg X}$ is flat, then $X$ has only finite quotient singularities.

\noindent 
Moreover, if $X$ is compact and \kahler, then it is a quotient of a complex torus by a finite group acting freely in codimension one.
\end{bigthm}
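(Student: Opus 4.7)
The statement splits into a local claim (quotient singularities) and a global claim (torus quotient), which I treat in turn.

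For the local claim, the question is local near a singular point $x_0 \in X$; I shrink $X$ to a small contractible Stein neighborhood of $x_0$. The crucial input is \emph{Braun's theorem}: the local fundamental group of a klt singularity is finite. Passing to the finite \qe cover $Y \to X$ corresponding to $\pi_1(\Reg X)$, the space $Y$ is again klt and $\pi_1(\Reg Y) = 1$. Flatness of $\T{\Reg Y}$ with now trivial monodromy yields a global frame of parallel holomorphic vector fields $V_1, \dots, V_n$ on $\Reg Y$, automatically satisfying $[V_i, V_j] = 0$ by torsion-freeness of the connection that declares the $V_i$ parallel. By normality, the $V_i$ and their dual $1$-forms $\omega_i$ extend to reflexive sections of $\T Y$ and $\Omegar{Y}{1}$; the $\omega_i$ are closed on $\Reg Y$ (since $[V_i,V_j]=0$), hence admit holomorphic primitives $f_i$ on $\Reg Y$ by simple connectedness, and these extend to $Y$ by Hartogs' theorem.

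The crux is to show that $F \defn (f_1, \dots, f_n) \colon Y \to \C^n$, which is a local biholomorphism on $\Reg Y$ (as $dF$ trivializes $\Omegar{Y}{1}$ there), is in fact a local biholomorphism at every point of $Y$. I would do this by integrating the commuting $V_i$ to a holomorphic action of a neighborhood of $0 \in \C^n$ on $Y$, locally free and transitive on $\Reg Y$. Since $\Sing Y$ is invariant under this action, any $y \in \Sing Y$ would lie on an orbit of dimension strictly less than $n$, equivalently with positive-dimensional stabilizer; Bochner linearization at $y$ would then contradict local freeness on the dense open $\Reg Y$. Hence $F$ is a submersion at every point of $Y$, $Y$ is smooth, and $X = Y/G$ has quotient singularities. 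This last step is the main technical obstacle.

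For the global claim, assume additionally that $X$ is compact \kahler. By the local claim $X$ has quotient singularities, and flatness of $\T{\Reg X}$ yields $c_1(X) = 0 \in \HH2.X.\R.$. The Torus Cover Theorem \cref{intro:tc} now produces a \qe cover $\gamma \colon T \x Z \to X$ with $T$ a complex torus, $\can Z \isom \O Z$ and $\wt q(Z) = 0$. Pulling back along $\gamma$ and restricting to a slice, $\T{\Reg Z}$ is flat; by the local claim $Z$ too has quotient singularities.

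It remains to show $\dim Z = 0$. Otherwise $Z$ is a positive-dimensional compact \kahler orbifold with flat tangent on $\Reg Z$, and a Bieberbach-type argument --- realize the orbifold universal cover of $Z$ as $\C^n$ via the developing map of the flat affine structure, then extract the finite-index translation subgroup of the deck group --- produces a finite \qe cover of $Z$ by a complex torus $T_Z$ of positive dimension. This forces $\wt q(Z) \geq \dim T_Z > 0$, contradicting $\wt q(Z)=0$. Therefore $Z$ is a point and $X = T/G$ for some finite $G \subset \Aut(T)$ acting freely in codimension one.
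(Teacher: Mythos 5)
Your first step is where the argument breaks down. You invoke Braun's (equivalently, Xu's) theorem that the local fundamental group of a klt singularity is finite; but that result is only known for \emph{algebraic} klt singularities, whereas here $(X,x)$ is an arbitrary analytic klt germ, which is not known to be locally algebraic. Working around precisely this point is the entire content of \cref{sec TX flat}: the paper only controls $\piet{\mathrm{Link}(X,X_\lambda)}$ for the \emph{maximal-dimensional} strata of a Whitney stratification (\cref{lem:finite pi_1 max stratum}), because there the normal slices have isolated klt singularities, which are algebraic by Artin's theorem, so that Xu's finiteness applies; one then obtains a \gkp cover only generically along $\Sing X$ (\cref{gkp max strata}) and must run an induction on $\dim \Sing X$ to reach \cref{local flat tangent,global flat tangent}. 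See \cref{rem:link max strata,rem:existence GKP cover}. Your proposal assumes away the main difficulty of the analytic setting.

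Second, even granting a finite quasi-\'etale cover $Y$ with $\pi_1(\Reg Y)=1$ and hence a commuting parallel frame $V_1,\dots,V_n$ of $\T{\Reg Y}$, your argument that $Y$ is smooth is not a proof: this is exactly the Lipman--Zariski problem for the klt germ (the tangent sheaf is free; is the germ smooth?). The reflexive extensions of the $V_i$ may a priori all fix a singular point $y$ (think of Euler-type vector fields on a cone), so the local $(\C^n,0)$-action need not be transitive near $y$; a positive-dimensional stabilizer at a singular point does not contradict local freeness on the dense open set $\Reg Y$, and Bochner linearization concerns compact group actions, not local flows of $(\C^n,+)$. The paper instead quotes the resolution of the Lipman--Zariski conjecture for log canonical spaces (\cite{GK13}, \cite{DruelZL}) in \cref{gkp smooth}; you need that input or a genuine substitute, and you rightly flag this step as the obstacle. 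Your global step is the classical Bieberbach argument and is fine in spirit, but the paper applies it more directly to the global smooth quasi-\'etale cover furnished by \cref{global flat tangent}, which also spares you from justifying $\cc1X=0\in\HH2.X.\R.$ on the singular space before invoking \cref{intro:tc}.
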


\cref{intro:torusquotient} would follow directly from \cref{intro:hol} as soon as the \gkp covers of~\cite{GKP16} are available.
In our non-algebraic setting, we cannot rely on that result, but capitalizing on the classical fact that \emph{isolated} singularities are algebraic, we are able to prove a ``generic'' version of~\cite{GKP16}, \emph{cf.}~\cref{gkp max strata}.
Coupling this with the resolution of the (log canonical) Lipman--Zariski conjecture due to Kov\'acs and the second author~\cite{GK13} and Druel~\cite{DruelZL} independently, this weaker statement turns out to be sufficient to obtain \cref{intro:torusquotient}.

\subsection*{Varieties with strongly stable tangent sheaf}

Theorems~\labelcref{intro:bochner}--\labelcref{intro:torusquotient}, together with some standard representation theory, imply \cref{intro:structure} below.
This result characterizes the conjectural building blocks of compact \kahler spaces with klt singularities and trivial first Chern class via stability properties of their tangent sheaf.
We refer to \cref{section strongly stable variety} for the relevant definitions.

\begin{bigcor}[Spaces with strongly stable tangent sheaf, \cref{TX ss structure}] \label{intro:structure}
Let $X$ be as in \cref{intro:tc}.
If $\T X$ is strongly stable, then $X$ admits a \qe cover that is either an irreducible Calabi--Yau variety or an irreducible holomorphic symplectic variety.
\end{bigcor}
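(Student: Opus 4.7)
The plan is to combine \cref{intro:tc}, \cref{intro:hol}, and \cref{intro:torusquotient}, exploiting that strong stability of $\T X$ propagates through \qe covers and forces every sheaf-theoretic decomposition of the tangent bundle to collapse to a single summand. First I would apply \cref{intro:tc} to obtain a \qe cover $\gamma \from T \x Z \to X$ with $T$ a complex torus of dimension $\wt q(X)$ and $Z$ a compact \kahler space with canonical singularities satisfying $\can Z \isom \O Z$ and $\wt q(Z) = 0$. The pulled-back tangent sheaf decomposes as $\mathrm{pr}_T^* \T T \oplus \mathrm{pr}_Z^{[*]} \T Z$; by strong stability this pullback remains stable, hence indecomposable, with respect to a suitable \kahler class on $T \x Z$, so one of the two summands must vanish. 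Since $\mathrm{pr}_T^* \T T$ is a trivial vector bundle of rank $\dim T$, it cannot be the surviving factor when $\dim T \ge 2$; outside the trivial case $\dim X = 1$, this forces $T$ to be a point. Replacing $X$ by $Z$, I henceforth assume that $X$ has canonical singularities, $\can X \isom \O X$, and $\wt q(X) = 0$.

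Next, I would apply \cref{intro:hol} to pass to a further \qe cover (still denoted $X$) on which
\[
\T X = \sF \oplus \bigoplus_{k \in K} \sE_k.
\]
The same stability argument forces exactly one of these summands to equal $\T X$. If this summand were the flat factor $\sF$, then $\T{\Reg X}$ would be flat, and \cref{intro:torusquotient} would exhibit $X$ as a quotient of a complex torus by a finite group acting freely in codimension one; this contradicts $\wt q(X) = 0$. Hence $\T X = \sE_k$ for a unique $k$, and the last bullet of \cref{intro:hol} identifies the full holonomy of the singular Ricci-flat metric on $\Reg X$ as either $\mathrm{SU}(n)$ or $\mathrm{Sp}(n/2)$. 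Unpacking the definitions collected in \cref{section strongly stable variety}, these two cases correspond precisely to $X$ being an irreducible Calabi--Yau variety or an irreducible holomorphic symplectic variety, respectively.

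The main obstacle I anticipate is the careful bookkeeping of stability through the successive \qe covers: I need to verify that strong stability of $\T X$ genuinely implies a form of stability of its pullback under $\gamma$ and under the further cover produced by \cref{intro:hol}, and that this is strong enough to rule out non-trivial direct sum decompositions at every step. Once the formalism of \cref{section strongly stable variety} is set up so that these properties transfer automatically to pullbacks and descend to direct summands in a polystable decomposition, \cref{intro:structure} reduces to the formal combination of the three main theorems sketched above, together with the elementary observation that a trivial vector bundle of rank $\ge 2$ is never stable.
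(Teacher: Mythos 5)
Your argument is correct and follows essentially the same route as the paper's proof of \cref{TX ss structure}: pass to a cover, invoke the holonomy decomposition $\T X = \sF \oplus \bigoplus_k \sE_k$, use (strong) stability to collapse to a single summand, rule out the flat factor via \cref{flat tangent implies torus}, and conclude from the $\mathrm{SU}(n)$ or $\mathrm{Sp}(n/2)$ holonomy via the Bochner principle and representation theory. The only small variation is that you run the torus-cover reduction explicitly (ruling out $T$ by observing that a trivial bundle of rank $\ge 2$ is never stable), whereas the paper uses \cref{abundance} to normalize the canonical sheaf and then rules out the flat factor directly from strong stability together with $\dim X \ge 2$; the two reductions are interchangeable.
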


\cref{intro:structure} is the \kahler version of~\cite[Theorem~E]{GGK}.
Its proof bypasses the use of Druel's splitting result for flat summands~\cite{Dru16} as well as the existence of \gkp covers.

\subsection*{Fundamental groups}

According to Campana's Abelianity Conjecture~\cite[Conjecture~7.3]{Campana04}, the fundamental group of the regular locus $\Reg X$ of a compact \kahler space with klt singularities such that $\cc1X = 0$ should be virtually abelian, and finite if the augmented irregularity of $X$ vanishes.
Unfortunately, this result seems out of reach for the moment, even in the projective case.
It is yet crucial to control $\pi_1(\Reg X)$, for instance by relating it to $\pi_1(X)$, in order to trivialize flat subsheaves of the tangent sheaf or to clear the difference between restricted holonomy groups (which are classified by Berger--Simons) and holonomy groups (which govern the geometry of $X$ via the Bochner principle).

The fundamental group of the whole space $\pi_1(X)$ is in general easier to understand but it might be much smaller that $\pi_1(\Reg X)$, as the standard \cref{kummer} shows. Our results can be combined to techniques borrowed from \emph{e.g.} \cite[\S~\!13]{GGK} to derive finiteness properties of $\pi_1(X)$ in any dimension, \emph{cf.} Theorem~\ref{pi1 c1=0}. 
%

If we restrict our attention to low dimensions, we arrive at much stronger statements.

\begin{bigthm}[Fundamental groups in dimension four, \cref{pi1 dim four}] \label{intro:pi1 II}
Let $X$ be as in \cref{intro:tc} and of dimension $\le 4$.
Then:
\begin{itemize}
\item $\pi_1(X)$ is virtually abelian.
\item If $\wt q(X) = 0$, then $\pi_1(X)$ is finite.
\end{itemize}
\end{bigthm}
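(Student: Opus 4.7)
My plan is to reduce the first bullet to the second via the torus cover of \cref{intro:tc}. The cover $\gamma \from T \x Z \to X$ is finite and \qe, and since its branch locus has complex codimension $\ge 2$ in the normal space $X$, the image $\gamma_* \pi_1(T \x Z)$ is a finite-index subgroup of $\pi_1(X)$. Thus $\pi_1(T \x Z) = \Z^{2 \wt q(X)} \x \pi_1(Z)$ sits as a finite-index subgroup of $\pi_1(X)$; since $\wt q(Z) = 0$ and $\dim Z \le 4$, the second bullet applied to $Z$ would give $\pi_1(Z)$ finite, hence $\pi_1(X)$ virtually abelian. When moreover $\wt q(X) = 0$ the torus $T$ is trivial and this shows $\pi_1(X)$ itself is finite.

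Thus the crux is the second bullet: assume $\wt q(X) = 0$ and $\dim X \le 4$, and show that $\pi_1(X)$ is finite. Applying \cref{intro:hol} and passing to the \qe cover it provides (which does not affect our goal, the kernel of $\pi_1(X') \to \pi_1(X)$ being finite), we may decompose $\T X = \sF \oplus \bigoplus_{k \in K} \sE_k$, with $\sF$ flat and each $\sE_k$ of full holonomy $\mathrm{SU}(n_k)$ or $\mathrm{Sp}(n_k/2)$, $n_k \ge 2$. Since the augmented irregularity is preserved under \qe covers we still have $\wt q(X) = 0$; combined with \cref{intro:torusquotient}, this rules out $\sF = \T X$ (otherwise $X$ would be a nontrivial torus quotient), so $\bigoplus_k \sE_k$ is nonzero.

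The dimension constraint $\dim X \le 4$ now leaves only a short list of possibilities for $(\rk \sF, n_1, \ldots, n_{|K|})$. If $|K| = 1$ and $\sE_1 = \T X$, then the tangent sheaf is strongly stable, and \cref{intro:structure} gives a further \qe cover of $X$ that is an irreducible Calabi--Yau or irreducible holomorphic symplectic variety of dimension $\le 4$; for such varieties (K3 surfaces in dimension $2$, Calabi--Yau threefolds, and four-dimensional ICY/IHS with canonical singularities) finiteness of $\pi_1$ is known or can be extracted from the projective case using the tools of this paper. The remaining cases feature several summands or a nontrivial flat part, and here the main obstacle is the absence, in the \kahler setting, of a general analog of Druel's algebraic integrability theorem~\cite{Dru16}, which in the projective setting would immediately yield a product decomposition $X \sim \prod X_k$ and hence $\pi_1(X) = \prod \pi_1(X_k)$. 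The workaround is to exploit the low rank of each summand: each $\sE_k$ has rank at most $3$, so \cref{intro:structure} applied to its leaf-space (once constructed) forces an ICY/IHS factor of dimension $\le 3$, while the flat part $\sF$ (of rank $\le 3$) is controlled via the last bullet of \cref{intro:hol} and \cref{intro:torusquotient}. Adapting the skeleton of \cite[\S\,13]{GGK} and substituting the \kahler building blocks established in Theorems~\labelcref{intro:bochner}--\labelcref{intro:structure}, one concludes finiteness of $\pi_1(X)$ in each of the remaining patterns.
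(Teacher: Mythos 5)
Your reduction of the first bullet to the second via the torus cover is exactly the paper's first step (the paper quotes \cite[Proposition~1.3]{Campana91} for the finite-index statement and \cite[Corollary~1.8]{AlgApprox} for the cases $\dim \le 3$). But your treatment of the crucial case --- $\dim X = 4$, $\wt q(X) = 0$, $\can X \cong \O X$ --- has a genuine gap, and it sits precisely where you flag it. You propose to run a case analysis on the holonomy decomposition $\T X = \sF \oplus \bigoplus_k \sE_k$ and to handle the mixed cases by applying \cref{intro:structure} ``to the leaf-space (once constructed)'' of each $\sE_k$ and by ``adapting the skeleton of \cite[\S~\!13]{GGK}.'' Constructing those leaf spaces \emph{is} the missing algebraic integrability/splitting theorem of Druel that you correctly identify as unavailable in the \kahler setting; without it there is no product decomposition, no factors whose $\pi_1$ you could multiply, and no way to ``control'' a nontrivial flat summand $\sF$ of rank $1 \le \rk\sF \le 3$ (\cref{intro:hol} only identifies $\T T$ as a \emph{direct summand} of $\gamma^{[*]}\sF$ in general, with equality requiring local algebraicity). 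So the ``remaining patterns'' are not actually disposed of. Even in the single-factor case your appeal to ``known'' finiteness for four-dimensional ICY/IHS varieties is circular in this context, since that finiteness is itself one of the things being proved here.

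The paper avoids the decomposition entirely at this point and uses an Euler-characteristic argument instead: \cref{chi ne 0 pi1} (a consequence of the Bochner principle, following Campana) says that $\chi(X, \O X) \ne 0$ forces $\pi_1(X)$ to be finite of order at most $2^{n-1}/|\chi(X,\O X)|$. For a fourfold with $\wt q(X) = 0$ and $\can X \cong \O X$ one has $\hh1.X.\O X. = 0$ by \cref{rem KS}, hence $\hh3.X.\O X. = 0$ by Serre duality, so
\[ \chi(X, \O X) = 1 + \hh2.X.\O X. + 1 \ge 2, \]
and finiteness follows at once. This is why the theorem is unconditional in dimension four (the parity of the dimension is what makes $\chi \ne 0$ automatic), whereas your route would require splitting theorems that the paper explicitly does not have. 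If you want to salvage your approach, you would need to replace the leaf-space step by something like the $\chi \ne 0$ criterion anyway --- at which point the decomposition becomes unnecessary.
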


\subsection*{Comparison with other works}

All our results have previously been established in \cite{GGK} under the assumption that $X$ is {\it projective}. In {\it loc. cit.} the projectivity assumption is used in a crucial way to obtain the existence of maximally quasi-\'etale covers \cite{GKP16} and to rely on Druel's splitting result \cite{Dru16}. These results are currently unavailable in the analytic setting and, as a result, our proofs of \emph{e.g.}~\cref{intro:bochner} and \cref{intro:hol}
follow a quite different path and are in the end somewhat more natural, if not simpler. 

A few months after the present article had been uploaded to the arXiv, B.~Bakker, C.~Lehn and the third-named author posted the preprint~\cite{BGL} where they prove the Decomposition Theorem in full generality, \emph{i.e.}~for any compact \kahler space with klt singularities and trivial first Chern class.
Of course, the present paper was largely motivated by the Decomposition Theorem in the \kahler setting, but we would like to point out that several of the arguments in~\cite{BGL} crucially rely on the existence of holonomy and torus covers (Theorems B and C of the present article).
Furthermore, some of the results proven here are logically independent of the splitting theorem of~\cite{BGL} (\emph{e.g.}~Theorem A or Theorem D).
For these reasons, we think that the two papers nicely complement each other.


\subsection*{Acknowledgements}

H.G.~and B.C.~would like to thank St\'ephane Druel and Matei Toma for several enlightening discussions.
P.G.~and P.N.~would like to thank Mihai P\u aun and Thomas Peternell for sharing their insight with them. The authors would like to thank an anonymous referee for his/her careful reading and for valuable comments.

\section{Preliminaries} \label{sec:basic material}

In this section we gather some basic material to be used in the rest of the article.

\subsection{Global conventions}

Unless otherwise stated, complex spaces are assumed to be countable at infinity, separated, reduced and connected.
Algebraic varieties and schemes are always assumed to be defined over the complex numbers.

\subsection{General definitions}

As a courtesy to the reader, we recall the following standard definitions.

\begin{dfn}[Irregularity] \label{dfn irreg}
The \emph{irregularity} of a compact complex space $X$ is $q(X) \defn \hh1.Y.\O Y.$, where $Y \to X$ is any resolution of singularities.
The \emph{augmented irregularity} of $X$ is
\[ \wt q(X) \defn \max \Big\{ q \big( \wt X \big) \;\Big|\; \wt X \to X \text{ quasi-\'etale cover} \Big\} \in \N_0 \cup \{ \infty \}. \]
\end{dfn}

\begin{rem} \label{rem KS}
If $X$ has rational (\emph{e.g.}~klt) singularities, one has $q(X) = \hh1.X.\O X.$.
If additionally $X$ is \kahler, then it follows from~\cite[Corollary~1.8]{KS} that
\[ q(X) = \hh0.Y.\Omegap Y1. = \hh0.X.\Omegar X1.. \]
\end{rem}

\begin{dfn}[Flat sheaves] \label{flat sheaf}
Let $X$ be an irreducible and reduced complex space and let $\wt X \to X$ be its universal cover.
We say that a rank $r$ vector bundle $E \to X$ is \emph{flat} if there exists a linear representation $\rho \from \pi_1(X) \to \GL r\C$ such that $E$ is isomorphic to the bundle $\factor{\wt X \times \C^r}{\pi_1(X)} \to X$, where $\pi_1(X)$ acts diagonally.
\end{dfn}

\begin{dfn} \label{dfn loc alg}
Let $X$ be a complex space.
We say that $X$ is \emph{locally algebraic}, or that $X$ has \emph{algebraic singularities}, if there exists a euclidean open cover $\{ U_i \}_{i \in I}$ of $X$ such that for every $i \in I$, there is a quasi-projective scheme $Y_i$, an open subset $V_i \subset Y_i^{\mathrm{an}}$ and a biholomorphic map $\phi_i \from U_i \bij V_i$.
\end{dfn}

\begin{exm}
Here are some (non-)examples of algebraic singularities.
\begin{enumerate}
\item Every complex manifold, and more generally every complex space with quotient singularities, is locally algebraic~\cite{Cartan57}.
\item\label{example isolated singularities} Every complex space with only isolated singularities is locally algebraic by~\cite[Theorem~3.8]{Artin69} or~\cite[corollaire~1, \S 3, chapitre~II]{Tou68}.
\item If the complex space $X$ admits a \lt algebraic approximation, then it is locally algebraic (see \cite[\S2.4]{AlgApprox} for the notion involved).
\item Take a non-isotrivial family of elliptic curves over the unit disc $\Delta$.
Restrict it to the punctured disc $\Delta^*$ and pull it back along the universal cover $\Delta \to \Delta^*$.
Now take a cone over this family fibrewise.
We obtain a log canonical threefold with one-dimensional singular locus which is \emph{not} locally algebraic, because the $j$-function associated to the exceptional divisor is not algebraic.
\end{enumerate}
\end{exm}

\subsection{Coverings of complex spaces}

We consistently use the following notation.

\begin{dfn}[Covering maps] \label{def:cover}
A \emph{cover} or \emph{covering map} is a finite, surjective morphism $\gamma \from Y \to X$ of normal, connected complex spaces.
The covering map $\gamma$ is called \emph{Galois} if there exists a finite group $G \subset \Aut(Y)$ such that $Y \to X$ is isomorphic to the quotient map $Y \lto \factor YG$.
\end{dfn}

\begin{dfn}[Quasi-\'etale maps] \label{def:quasietale}
A morphism $\gamma \from Y \to X$ between normal complex spaces is called \emph{quasi-\'etale} if $\gamma$ is of relative dimension zero and \'etale in codimension one.
In other words, $\gamma$ is quasi-\'etale if $\dim Y = \dim X$ and if there exists a closed subset $Z \subset Y$ of codimension $\codim{Y}{Z} \ge 2$ such that $\gamma\big|_{Y \setminus Z} \from Y \setminus Z \to X$ is \'etale.
\end{dfn}

By purity of branch locus and the extension theorem of~\cite[Theorem~3.4]{DG94}, we get an equivalence of categories between the quasi-\'etale covers of $X$ and the \'etale covers of $\Reg X$.
We emphasize that with our definitions, an \'etale or quasi-\'etale \emph{cover} is automatically finite.

We will use several times the fact that taking Galois closure also works in the analytic context.
Compare~\cite[Lemma~7.4]{TorusQuotients}.

\begin{lem}[Galois closure] \label{lem:galois closure}
Let $\gamma \from Y \to X$ be a covering map between normal complex spaces.
Then there exists a Galois cover $g \from \wt Y \to Y$ (with $\wt Y$ normal) such that the composed map $f = \gamma \circ g \from \wt Y \to X$ is also Galois and we have an equality of branch loci $\mathrm{Br}(f) = \mathrm{Br}(\gamma)$.
In particular, if $\gamma$ is quasi-\'etale, then so is $f$.
\end{lem}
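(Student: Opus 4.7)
The plan is to construct the Galois closure over the étale locus of $\gamma$ using covering-space theory and then extend it across the branch locus via the analytic extension theorem. Set $B \defn \mathrm{Br}(\gamma) \subset X$, $U \defn X \setminus B$, and $V \defn \gamma\inv(U) \subset Y$. Then $\gamma\big|_V \from V \to U$ is a finite étale map of degree $d \defn \deg(\gamma)$ between connected normal complex spaces, hence a topological covering of degree $d$.

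The cover $\gamma\big|_V$ corresponds to a subgroup $H \le G \defn \pi_1(U)$ of index $d$. Let $N \defn \bigcap_{\sigma \in G} \sigma H \sigma\inv$ be its normal core, which is normal in $G$ of index bounded by $d!$. The subgroup $N$ defines a connected finite Galois étale cover $f_0 \from \wt V \to U$ with Galois group $G/N$, and the inclusion $N \subset H$ produces a factorization $\wt V \xrightarrow{g_0} V \xrightarrow{\gamma|_V} U$. Since $N$ is also normal in $H$, the map $g_0$ is itself Galois étale of group $H/N$.

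Next I would extend these étale covers across $B$ and $\gamma\inv(B)$. By~\cite[Theorem~3.4]{DG94}, the finite étale cover $f_0 \from \wt V \to U$ extends uniquely to a finite surjective map $f \from \wt Y \to X$ with $\wt Y$ normal, and analogously $g_0$ extends to $g \from \wt Y \to Y$. Connectedness of $\wt V$ forces $\wt Y$ to be connected. By uniqueness of the extensions, the identity $f_0 = (\gamma|_V) \circ g_0$ gives $f = \gamma \circ g$, and the actions of $G/N$ on $\wt V$ and of $H/N$ on $\wt V \to V$ extend to $\wt Y$, so both $f$ and $g$ are Galois. Since $f$ is étale over $U$ by construction, $\mathrm{Br}(f) \subset \mathrm{Br}(\gamma)$.

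It remains to verify the reverse inclusion $\mathrm{Br}(\gamma) \subset \mathrm{Br}(f)$. Suppose $\gamma$ is ramified at $y \in \gamma\inv(p)$ with $p \in B$, and pick $\wt y \in g\inv(y)$. If $f$ were étale at $\wt y$, then $\frm_p \O{\wt Y, \wt y} = \frm_{\wt y}$; combined with the inclusions $\frm_p \O{\wt Y, \wt y} \subset \frm_y \O{\wt Y, \wt y} \subset \frm_{\wt y}$, this would force $\frm_y \O{\wt Y, \wt y} = \frm_{\wt y}$. Hence $g$ would be étale, in particular faithfully flat, at $\wt y$, and faithful flatness would then yield $\frm_p \O{Y, y} = \frm_y$, contradicting the ramification of $\gamma$ at $y$. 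I expect the main obstacle to lie not in any individual step but in the bookkeeping---ensuring that the Galois structure and the factorization from the étale locus transport compatibly across the branch loci, which hinges on the uniqueness clause in~\cite[Theorem~3.4]{DG94} together with the normality of $\wt Y$.
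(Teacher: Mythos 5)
Your proof follows the same route as the paper's: pass to the étale locus, take the normal core of the corresponding finite-index subgroup of $\pi_1$, and extend the resulting Galois covers across the branch locus via the uniqueness in~\cite[Theorem~3.4]{DG94}. The only place you go beyond the paper (which declares the branch-locus equality ``obvious'') is the reverse inclusion $\mathrm{Br}(\gamma)\subset\mathrm{Br}(f)$; your argument there is sound, though the step from $\frm_y\,\O{\wt Y,\wt y}=\frm_{\wt y}$ to ``$g$ is étale (hence faithfully flat) at $\wt y$'' implicitly uses that a finite dominant map between equidimensional normal complex spaces that is unramified at a point is in fact a local biholomorphism there, which deserves a word of justification.
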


\begin{proof}
The morphism $\gamma$ is \'etale over a Zariski open set $X^\circ\subset X$ and then it corresponds to a finite index subgroup $H\subset \pi_1(X^\circ)$. This subgroup has a finite number of conjugates in $\pi_1(X^\circ)$ and the intersection of these conjugates is a normal finite index subgroup $H^\circ$ in $\pi_1(X^\circ)$. This subgroup $H^\circ$ gives rise to a Galois cover
\[ \wt Y^\circ\lto Y^\circ:=\gamma^{-1}(X^\circ)\stackrel{\gamma}{\lto} X^\circ. \]
The finite morphisms $\wt Y^\circ\lto X^\circ$ and $\wt Y^\circ\lto Y^\circ$  can be extended over the whole of $X$ and $Y$ to finite morphisms $f:\wt Y\lto X$ and $g:\wt Y\lto Y$~\cite[Theorem~3.4]{DG94}. The factorization $f=\gamma\circ g$ exists (by construction) over $X^\circ$ and it extends naturally. The equality of the branch loci is obvious.
\end{proof}

As a consequence, quotient singularities can be characterized in terms of smoothness of quasi-\'etale covers.

\begin{lem}[Quotient singularities] \label{lem:quotient sing}
A germ of normal singularity $(X,x)$ is a quotient singularity if and only if it admits a smooth quasi-\'etale cover.
\end{lem}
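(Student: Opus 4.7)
The plan is to handle the two directions separately: the forward direction will follow from Chevalley--Shephard--Todd, while the reverse direction will rely on taking a Galois closure and invoking purity of the branch locus.

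For the forward implication, suppose $(X,x) \isom (\C^n, 0)/G$ with $G \subset \GL{n}{\C}$ a finite linear group. Let $N \lhd G$ be the normal subgroup generated by pseudo-reflections. By Chevalley--Shephard--Todd the quotient $\wt Y := \C^n/N$ is again smooth, and the residual action of $G/N$ on $\wt Y$ is free in codimension one. The quotient map $\wt Y \to \wt Y/(G/N) \isom (X,x)$ is then a smooth quasi-\'etale cover.

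For the converse, let $\gamma \from Y \to X$ be a smooth quasi-\'etale cover of a representative of the germ. I would first apply \cref{lem:galois closure} to obtain a Galois quasi-\'etale cover $f = \gamma \circ g \from \wt Y \to X$ with Galois group $G$. Then $g \from \wt Y \to Y$ is itself quasi-\'etale: it is \'etale over $Y^\circ := \gamma\inv(X^\circ)$, whose complement has codimension at least two in $Y$. The crux of the argument is that, since $Y$ is smooth, Zariski--Nagata purity of the branch locus forces $g$ to be \'etale everywhere; consequently $\wt Y$ is itself smooth.

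With $\wt Y$ smooth and $X \isom \wt Y/G$, it remains to work at the level of germs. I would pick $\wt y \in f\inv(x)$ and let $H := G_{\wt y} \subset G$ denote its stabilizer. Cartan's linearization lemma identifies the action of $H$ on the germ $(\wt Y, \wt y) \isom (\C^n, 0)$ with a linear action of $H$ on $\C^n$. Choosing a sufficiently small $H$-invariant neighborhood of $\wt y$ disjoint from its $G$-translates outside $H \cdot \wt y$ then yields an isomorphism of germs $(X, x) \isom (\C^n, 0)/H$, so $(X,x)$ is a quotient singularity. The only delicate input is the availability of purity of the branch locus in the analytic category; once that is granted, the remainder of the argument is essentially formal bookkeeping between germs and Galois covers.
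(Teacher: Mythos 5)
Your proof is correct and follows essentially the same route as the paper: the forward direction is the standard Chevalley--Shephard--Todd reduction to a group without pseudo-reflections, and the converse uses Galois closure (\cref{lem:galois closure}), purity of the branch locus on the smooth space $Y$ to conclude $\wt Y$ is smooth, and then Cartan's linearization. The only difference is cosmetic: you spell out the Cartan step at the level of germs, which the paper delegates to a citation.
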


\begin{proof}
If $(X, x)$ is isomorphic to $\left( \factor{\C^n}G, 0 \right)$, we may assume that $G$ contains no quasi-reflections~\cite{ShephardTodd54,Chevalley55} and then the quotient map $\mathbb C^n \to \factor{\C^n}G$ is quasi-\'etale.

Conversely, if $\gamma \from Y \to X$ is quasi-\'etale with $Y$ smooth, then we can take a Galois closure as in \cref{lem:galois closure}, \emph{i.e.}~a map $g \from \wt Y \to Y$ with $f = \gamma \circ g \from \wt Y \to X$ Galois.
Since $f = \gamma \circ g$ is quasi-\'etale, the map $g$ does not branch over any divisor in $Y$ and since $Y$ is smooth we deduce that the map $g$ is \'etale by purity of the branch locus.
This implies that $\wt Y$ is smooth as well and hence $(X, x)$ is a quotient singularity~\cite{Cartan57}\footnote{The reference~\cite{Cartan57} is also available at \url{http://www.numdam.org/article/SHC_1953-1954__6__A12_0.pdf}.}.
\end{proof}

\subsection{Slope stability}

Let $X$ be a compact, normal \kahler space of dimension $n$ and let $\alpha\in \HH2.X.\R.$ be a \kahler class. We fix a resolution $\pi:\wh X\to X$ which is isomorphic over $X_{\rm reg}$. Finally, let $\sE$ be a torsion-free sheaf of rank $r$ on $X$, and let $\wh \sE :=\pi^*\sE/{\rm tor}$.

The slope of $\sE$ with respect to $\alpha$, denoted by $\mu_{\alpha}(\sE)$ is defined by $\mu_{\alpha}(\sE):= \frac 1r c_1(\wh \sE)\cdot (\pi^*\alpha)^{n-1}$. It is easy to check that the slope is independent of the chosen resolution. 

\begin{dfn}[Stability] \label{defn stability}
Let $\sE$ be a reflexive sheaf on $X$. We say that 
\begin{enumerate}
\item $\sE$ is \emph {stable} with respect to $\alpha$ if for any non-zero subsheaf $\sF\subset \sE$ of rank less than $r$, we have $\mu_\alpha(\sF)<\mu_\alpha(\sE)$. 
\item  $\sE$ is \emph {polystable} with respect to $\alpha$ if it is a direct sum of stable sheaves with identical slope.
\item $\sE$ is \emph {strongly stable} with respect to $\alpha$ if for any quasi-\'etale cover $f:Y\to X$, the sheaf $f^{[*]}\sE:=(f^*\sE)^{**}$ is stable with respect to $f^*\alpha$. 

\end{enumerate}
\end{dfn}

\subsection{Stratifications of complex spaces}

Here we just set up some notation to be used when dealing with stratifications of singular spaces.
We first recall that a \emph{Whitney stratification} of a closed subspace $X$ of a smooth real manifold $M$ is a locally finite partition $\sS$ of $X$ into smooth, connected, locally closed subsets $(X_\lambda)_{\lambda\in \Lambda}$:
\[ (\sS): \quad X = \bigsqcup_{\lambda\in\Lambda} X_\lambda \]
satisfying certain incidence conditions.
The pair $(X,\sS)$ is called a \emph{stratified space} and the $X_\lambda$ are called the \emph{strata} of $(X, \sS)$.
We refer to~\cite[Part~I, \S~1.2 and~1.3]{GMbook} or to~\cite{Mather70} for more background on this classical topic, in particular the existence of Whitney stratifications.

We will mostly be interested in the situation where $X$ is an irreducible and reduced complex space.
In this case, the strata are asked to be smooth and connected locally closed \emph{analytic} subsets.
In particular, by dimension we will always mean the dimension as a complex space.

A stratification can alternatively be encapsulated in a filtration of $X$ by closed analytic subsets:
\[ S_k \defn \underset{\{\lambda\,\mid\, \dim(X_\lambda)\le k\}}{\bigsqcup} X_\lambda. \]
So defined, $S_k$ has dimension at most $k$ and if $n \defn \dim X$ we have:
\[ S_0 \subset S_1 \subset \cdots \subset S_n = X. \]
We will be mainly interested in what we call \emph{maximal strata}.

\begin{dfn}[Maximal strata] \label{def:max strata}
Let $\sS:=(X_\lambda)_{\lambda\in \Lambda}$ be a Whitney stratification of $X$ and $(S_k)_{0\le k\le n}$ be the corresponding filtration.
We let $d < n$ be the maximum index such that $S_d \subsetneq X$.
The strata indexed by
\[ \Lambda_\maxi := \left\{\lambda\in\Lambda\mid \dim(X_\lambda)=d\right\} \]
are called the \emph{maximal (dimensional) strata}.
\end{dfn}

This definition is meaningful in particular when the stratification $\sS$ satisfies $S_d = \Sing X$.

\begin{dfn}[Refining stratifications] \label{refine}
Let $\sS$ and $\sS'$ be two Whitney stratifications of $X$.
We say that $\sS'$ is a \emph{refinement} of $\sS$ if the strata of $\sS$ are unions of strata of $\sS'$.
In this case we write $\sS' < \sS$.
\end{dfn}

Since strata are irreducible by definition, in the above situation each $\sS$-stratum contains exactly one $\sS'$-stratum as an open subset.

\begin{dfn}[Stratified maps]
A continous map $f \from X \to Y$ is called \emph{stratified} (with respect to Whitney stratifications $\sS_X$ and $\sS_Y$ of $X$ and $Y$, respectively) if for each $\sS_Y$-stratum $A \subset Y$, the preimage $f\inv(A)$ is a union of $\sS_X$-strata and $f$~takes each of these strata smoothly and submersively to $A$.
\end{dfn}

\begin{thm}[Stratifications of maps, \emph{cf.}~\protect{\cite[Part~I, \S1.7]{GMbook}}] \label{strat map}
Let $f \from X \to Y$ be a holomorphic map, $X$ and $Y$ being endowed with Whitney stratifications as above.
Then there exist refinements $\sS_X' < \sS_X$ and $\sS_Y' < \sS_Y$ such that $f \from (X, \sS_X') \to (Y, \sS_Y')$ is a stratified map. \qed
\end{thm}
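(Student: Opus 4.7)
The plan is to combine two classical ingredients of analytic stratification theory: the existence of Whitney refinements adapted to any locally finite family of locally closed analytic subsets, and generic smoothness for holomorphic maps.

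First I would form the coarse partition of $X$ whose pieces are the intersections $Z_{\mu,\lambda} \defn X_\mu \cap f\inv(Y_\lambda)$, where $X_\mu$ and $Y_\lambda$ run over the strata of $\sS_X$ and $\sS_Y$ respectively. Since each $Y_\lambda$ is locally closed analytic in $Y$ and $f$ is holomorphic, each $f\inv(Y_\lambda)$, and hence each $Z_{\mu,\lambda}$, is locally closed analytic in $X$; the collection $\{Z_{\mu,\lambda}\}$ is locally finite. Applying the existence theorem for Whitney stratifications adapted to a locally finite family of locally closed analytic subsets (as recorded in~\cite[Part~I, \S 1.2]{GMbook}) I obtain a Whitney stratification $\sS_X'' < \sS_X$ in which every $Z_{\mu,\lambda}$ is a union of strata. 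By construction, the preimage under $f$ of any $\sS_Y$-stratum is already a union of $\sS_X''$-strata.

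Next I would secure the submersion condition by means of generic smoothness. For each $\sS_X''$-stratum $Z$, mapping into the unique $\sS_Y$-stratum $Y_\lambda$ containing $f(Z)$, the rank of $d(f|_Z)$ attains its generic value on a Zariski-open dense subset of $Z$; the locus where the rank drops is a closed analytic subset. Removing these bad loci and treating them as new lower-dimensional pieces, then reapplying the Whitney existence theorem on $X$, I obtain a refinement in which every stratum has \emph{constant} rank under $f$. Simultaneously I would refine $\sS_Y$ so that the locally submanifold images of the new $X$-strata form unions of $Y$-strata: collecting the (locally finitely many) local images and invoking the Whitney existence theorem on $Y$ yields a Whitney refinement $\sS_Y' < \sS_Y$. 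Iterating the procedure finitely many times — each iteration strictly decreasing a dimension-based complexity of the bad locus — produces the desired $\sS_X'$ and $\sS_Y'$.

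The main obstacle is verifying that the Whitney regularity conditions survive each refinement step and that the whole bookkeeping terminates. Both facts are standard in the Thom--Mather theory of stratified spaces: Whitney conditions are inherited by refinements coming from the intersection with analytic subsets, pulled back along smooth submersions, and a double induction on $(\dim Y, \dim X)$ controls termination. In the holomorphic setting the argument is essentially identical to the real analytic one detailed in~\cite[Part~I, \S 1.7]{GMbook}, with complex dimensions replacing real ones throughout.
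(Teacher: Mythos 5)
The paper does not actually prove this statement: it is quoted from \cite[Part~I, \S~1.7]{GMbook} and stated without proof, so there is no in-house argument to compare yours with. Your sketch reproduces the standard Hironaka--Mather construction behind the cited result (refine the source so that preimages of target strata are unions of strata, stratify by the rank of $f$ on each piece, push the images down to refine the target, and recurse), and in outline that is the right proof.

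There is, however, one genuine gap, and it sits exactly at the step ``collecting the (locally finitely many) local images and invoking the Whitney existence theorem on $Y$''. For that step you need the images $f\big(\overline{X'_\mu}\big)$ to be closed analytic subsets of $Y$ and the resulting family to be locally finite; both require $f$ to be \emph{proper} (Remmert's proper mapping theorem), and neither holds for a general holomorphic map. Indeed, without properness the statement itself fails: let $f \from \C \to T$ be a dense one-parameter subgroup of a complex torus $T$ of dimension at least two. The source is smooth and connected, any Whitney stratification of $T$ has an open dense stratum $U$ of dimension $\ge 2$ which meets the dense image, and no stratum of the open dense set $f\inv(U) \subset \C$ can map submersively onto $U$ for dimension reasons. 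The statement in \cite{GMbook} carries the properness hypothesis, and in the paper the theorem is only ever applied to finite covers, which are proper; so your argument goes through once you add that hypothesis and invoke Remmert at the image step. Two smaller points: constant rank of $f$ on a stratum is not yet submersivity onto the target stratum --- you need the rank to equal the dimension of the target stratum, which is only achieved after the target has been refined along the images and the recursion rerun; and since refining $\sS_Y$ forces a further refinement of $\sS_X$ by preimages, you should make the termination argument explicit (the dimension of the locus where the conditions fail strictly drops at each round, in both $X$ and $Y$).
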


We finally recall that the transverse structure of a given stratum is topologically locally trivial in a rather strong sense.
In the following statement, a \emph{normal (or transverse) slice} $N_\lambda$ to a stratum $X_\lambda$ is a smooth submanifold of the ambient space $M$ which is transverse to each stratum of $X$, intersects $X_\lambda$ in a single point and satisfies $\dim(N_\lambda) = \codim{M}{X_\lambda}$. We denote by $\bS^k$ the euclidean unit sphere in $\mathbb R^{k+1}$.

\begin{thm}[\emph{cf.} Part~I, \S~\!\!1.4 in~\cite{GMbook}] \label{thm:neighborhood stratum}
Let $X_\lambda$ be any stratum of $(X,\sS)$.
Then there exists a stratified set $L\subset \bS^{k}$ for some integer $k$ such that any point $x_0\in X_\lambda$ has a neighborhood $U$ in $X$ endowed with a homeomorphism
\[ h \from U \longrightarrow (U\cap X_\lambda)\times c(L) \]
where $c(L)$ is the cone over $L$. The space $L$ is called the \emph{link} of the stratum $X_\lambda$ and is denoted $\mathrm{Link}(X,X_\lambda)$.

The space $c(L)$ is endowed with a natural stratification whose strata are the vertex of the cone and subspaces of the form $A \x ]0, 1]$, with $A$ a stratum of $L$.
The homeomorphism $h$ sends the strata of $U$ into those of $(U \cap X_\lambda) \x c(L)$.
In particular, if $N$ is any transverse slice to $X_\lambda$ at $x_0$ then $(U \cap N) \setminus \set{x_0}$ has the homotopy type of the link $\mathrm{Link}(X, X_\lambda)$. \qed
\end{thm}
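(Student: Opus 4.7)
The statement is classical and I would reduce it to Mather's topological triviality theorem for Whitney stratifications. The plan is to construct a tubular neighborhood of the stratum $X_\lambda$ inside the ambient smooth manifold $M$ in which all strata of $X$ sit compatibly, and then to apply the first isotopy lemma of Thom--Mather to trivialize it.

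Concretely, the first step is to equip $(X,\sS)$ with a system of \emph{control data} in the sense of Mather: for every stratum $X_\mu$ a tubular neighborhood $T_\mu\subset M$, a smooth retraction $\pi_\mu\colon T_\mu\to X_\mu$ and a tubular function $\rho_\mu\colon T_\mu\to[0,\infty)$ with $\rho_\mu^{-1}(0)=X_\mu$, subject to the usual commutation relations $\pi_\mu\circ\pi_\nu=\pi_\mu$ and $\rho_\mu\circ\pi_\nu=\rho_\mu$ whenever $X_\mu<X_\nu$. The existence of such data is guaranteed by the Whitney conditions (a) and (b), and is exactly the input needed to make the retractions well-behaved transversely.

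Next, fix $x_0\in X_\lambda$ and a transverse slice $N=N_\lambda$ to $X_\lambda$ at $x_0$ with $\dim N=\codim{M}{X_\lambda}$. For a sufficiently small $\eps>0$ I would define the link as
\[ L \defn X\cap N\cap\rho_\lambda^{-1}(\eps), \]
a compact set stratified by intersection with the strata of $X$, and embeddable in a small sphere $\bS^k\subset N$. The radial structure provided by $\rho_\lambda$ gives a canonical stratified homeomorphism between $X\cap N\cap\rho_\lambda^{-1}([0,\eps])$ and the cone $c(L)$, with the vertex corresponding to $x_0$ and each stratum $A$ of $L$ producing the stratum $A\times(0,1]$. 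This handles the transverse direction.

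To globalize along the stratum, I would integrate a controlled lift of a trivializing vector field on $X_\lambda$ near $x_0$: pulling back a smooth trivialization chart $U\cap X_\lambda\cong (U\cap X_\lambda)$ via $\pi_\lambda$ and using the Thom--Mather first isotopy lemma on the proper stratified submersion $(\pi_\lambda,\rho_\lambda)$ restricted to a small tube, one obtains a stratified homeomorphism $h\colon U\to (U\cap X_\lambda)\times c(L)$ sending strata to strata. The hard part of the argument, and the one where Whitney (b) is indispensable, is precisely this last step: ensuring that the flow of the controlled lift preserves \emph{all} strata simultaneously and hence produces a truly stratified trivialization, not merely a topological one. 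Once this is in place the homotopy statement for $(U\cap N)\setminus\{x_0\}$ follows from the cone structure, since the cone on $L$ minus the vertex deformation retracts onto $L$.
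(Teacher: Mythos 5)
Your proposal is correct and follows exactly the standard Thom--Mather argument (control data, link as a level set of the tubular function in a transverse slice, first isotopy lemma for the controlled trivialization); this is precisely the proof contained in the reference the paper cites, as the paper itself gives no proof beyond pointing to \cite{GMbook} and \cite{Mather70}. No gaps worth flagging.
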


This local picture can be turned into a global one.

\begin{thm}[Existence of tubular neighborhoods,~\cite{Mather70}] \label{thm:tubular neighborhood}
Let $X_\lambda$ be any stratum of $(X, \sS)$.
Then there exists a closed neighborhood $U_\lambda$ of $X_\lambda$ in $X$ and a continuous retraction
\[ f_\lambda \from U_\lambda \lto X_\lambda\]
that is locally topologically trivial with fibre $c(\mathrm{Link}(X,X_\lambda))$.
The natural inclusion $X_\lambda \inj U_\lambda$ being a section of $f_\lambda$, the punctured open neighborhood $U_\lambda^\circ \setminus X_\lambda$ is also locally topologically trivial over $X_\lambda$ with fibre homotopy equivalent to $\mathrm{Link}(X,X_\lambda)$.
\end{thm}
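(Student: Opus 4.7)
The plan is to invoke Mather's theory of abstract stratified spaces and control data, of which \cref{thm:neighborhood stratum} is essentially the pointwise version. The main work is to globalise the local cone structures around points of $X_\lambda$ into a single continuous retraction $f_\lambda$ whose fibres are all homeomorphic to $c(\mathrm{Link}(X,X_\lambda))$.

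First, I would choose \emph{control data} in the sense of Mather: for every stratum $X_\mu$ of $(X,\sS)$, pick a tubular neighbourhood $T_\mu$ of $X_\mu$ in the ambient manifold $M$, together with a smooth retraction $\pi_\mu \from T_\mu \to X_\mu$ and a tubular function $\rho_\mu \from T_\mu \to [0,\varepsilon_\mu)$ vanishing exactly on $X_\mu$. By shrinking the $T_\mu$ inductively on the dimension of the strata, one arranges the compatibility conditions $\pi_\mu \circ \pi_\nu = \pi_\mu$ and $\rho_\mu \circ \pi_\nu = \rho_\mu$ on $T_\mu \cap T_\nu \cap X$ whenever $X_\mu \subset \overline{X_\nu}$. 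The existence of such a system is the technical core of~\cite{Mather70}. Having fixed it, I define
\[ U_\lambda \defn \overline{\set{x \in T_\lambda \cap X \mid \rho_\lambda(x) < \delta}} \quad \text{and} \quad f_\lambda \defn \pi_\lambda\big|_{U_\lambda} \]
for $\delta$ small enough; this is a continuous retraction of $U_\lambda$ onto $X_\lambda$, with $X_\lambda \inj U_\lambda$ as a section.

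To establish local topological triviality around a point $x_0 \in X_\lambda$, choose a small open $W \subset X_\lambda$ containing $x_0$ which is diffeomorphic to an open ball and which trivialises the bundle structure from \cref{thm:neighborhood stratum}. Pick smooth vector fields on $W$ providing the trivialisation, and lift them, using the commutation relations of the control data, to \emph{controlled stratified vector fields} on $f_\lambda^{-1}(W)$ that are tangent to each stratum and project to the chosen ones on $X_\lambda$. Thom's first isotopy lemma then guarantees that such controlled vector fields integrate to continuous flows which are smooth along each stratum, and the time-one flow furnishes a homeomorphism
\[ f_\lambda^{-1}(W) \;\bij\; W \x f_\lambda^{-1}(x_0). \]
By \cref{thm:neighborhood stratum} applied at $x_0$, the fibre $f_\lambda^{-1}(x_0)$ is homeomorphic to $c(\mathrm{Link}(X,X_\lambda))$, which yields the first assertion.

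The second assertion follows by restricting everything to the open locus $\set{\rho_\lambda > 0}$: the controlled flows preserve this set, so the punctured neighbourhood $U_\lambda^\circ \setminus X_\lambda$ is locally trivial over $X_\lambda$ with fibre $c(\mathrm{Link}(X,X_\lambda)) \setminus \set{\text{vertex}} \isom \mathrm{Link}(X,X_\lambda) \x (0,1]$, which retracts onto $\mathrm{Link}(X,X_\lambda)$. The principal obstacle throughout is establishing the existence of control data with the aforementioned commutation relations, and then lifting arbitrary vector fields on $X_\lambda$ to controlled ones on $U_\lambda$; both statements are non-trivial and rely on the Whitney regularity conditions. Once granted, the rest is a standard isotopy-extension argument, so in the write-up I would simply cite~\cite{Mather70} (or the exposition in~\cite[Part~I, \S1.5--1.6]{GMbook}) for the existence of control data and flows, and spell out only the identification of the fibre with $c(\mathrm{Link}(X,X_\lambda))$.
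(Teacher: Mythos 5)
The paper states this result without proof, simply citing Mather's notes, and your outline is precisely the standard Thom--Mather argument behind that citation: control data $(\pi_\mu,\rho_\mu)$ with the commutation relations, controlled lifts of vector fields, and Thom's first isotopy lemma to get local triviality, with the fibre identified as the cone on the link. This is correct and consistent with the paper's treatment, and your decision to defer the existence of control data and controlled flows to \cite{Mather70} (or \cite[Part~I, \S1.5--1.6]{GMbook}) is exactly what the authors do.
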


This topological stability will be used through the following corollary.

\begin{cor} \label{cor:branching stratum}
Let $\gamma \from Y \to X$ be a \qe cover between normal complex spaces, and let $\sS$ be a Whitney stratification of $X$ such that $\Sing X$ is a union of strata.
Then for any maximal stratum $X_\lambda$, the following dichotomy holds: either $X_\lambda \subset \mathrm{Br}(\gamma)$ or $X_\lambda \cap \mathrm{Br}(\gamma) = \emptyset$.
\end{cor}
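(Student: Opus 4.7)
My plan is to prove the dichotomy by showing that $X_\lambda \cap \mathrm{Br}(\gamma)$ is both open and closed in $X_\lambda$; combined with the connectedness of $X_\lambda$ (strata being connected by definition of a Whitney stratification), this gives the claim. Closedness is immediate since $\mathrm{Br}(\gamma)$ is closed in $X$. For openness, I first use that the equivalence of categories between \qe covers of $X$ and \'etale covers of $\Reg X$ (recalled right after \cref{def:quasietale}) forces $\mathrm{Br}(\gamma) \subset \Sing X$ and makes the restriction $\gamma_0 \defn \gamma|_{\gamma^{-1}(\Reg X)}$ a topological \'etale cover of degree $N \defn \deg\gamma$.

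Given $x_0 \in X_\lambda$, I would apply \cref{thm:neighborhood stratum} to obtain a stratified homeomorphism $h \from U \bij V \times c(L)$, where $U$ is a small open neighborhood of $x_0$ in $X$, $V \defn U \cap X_\lambda$ is contractible, and $L = \mathrm{Link}(X, X_\lambda)$. The crucial consequence of $X_\lambda$ being a maximal stratum is that no stratum $X_\mu \ne X_\lambda$ has $x_0$ in its closure: indeed, by the frontier condition of Whitney stratifications, such a $\mu$ would force $X_\lambda \subset \overline{X_\mu}$ and hence $\dim X_\mu > \dim X_\lambda$, but by definition of maximality the only stratum of dimension strictly larger than $\dim X_\lambda$ is $\Reg X$. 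After shrinking $U$, the only strata meeting $U$ are therefore $X_\lambda \cap U$ and $\Reg X \cap U$; under $h$ this identifies $V \times \{o\}$ with $X_\lambda \cap U$ and $V \times (c(L) \setminus \{o\})$ with $\Reg X \cap U$. In particular $L$ is a smooth manifold.

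The cover $\gamma_0$ restricted to $\Reg X \cap U \simeq V \times (L \times (0,1\,])$ is then classified, thanks to the contractibility of $V$, by a single representation $\rho \from \pi_1(L) \to S_N$. For any $x \in V$, choosing a small Stein ball $B_x \ni x$ contained in $U$, the inclusion $B_x \cap \Reg X \hookrightarrow U \cap \Reg X$ is a homotopy equivalence (both sides retract onto the common fibre $L$), so the \'etale cover pulled back to $B_x \cap \Reg X$ is classified by the same $\rho$. Since $Y$ is normal and the \'etale extension of such a cover across $B_x$ is unique, this extension consists of $N$ disjoint copies of $B_x$ precisely when $\rho$ is trivial. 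Equivalently, $x \notin \mathrm{Br}(\gamma)$ if and only if $\rho$ is trivial --- a condition that does not involve $x$. Hence $V$ is either entirely inside $\mathrm{Br}(\gamma)$ or entirely disjoint from it, which proves openness and concludes the dichotomy.

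The delicate point is arranging that the local monodromy really is governed by a single representation $\rho$ independent of the chosen point $x \in V$. This is the reason for invoking the tubular neighborhood structure of \cref{thm:tubular neighborhood} and the product decomposition of \cref{thm:neighborhood stratum}; the maximality of $X_\lambda$ is exactly what makes the local link $L$ smooth and keeps deeper strata away from the punctured tubular neighborhood, so that everything reduces to the standard topological classification of \'etale covers.
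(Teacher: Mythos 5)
Your proof is correct, and it takes a genuinely different route from the paper. The paper's own proof is a two-line reduction to~\cite[Corollary~3.12]{GKP16} applied with $A = \Sing X$: if $X_\lambda \subset \Sing X$, then $\overline{X_\lambda}$ is an irreducible component of $\Sing X$ by maximality and the cited result applies directly; otherwise $X_\lambda \subset \Reg X$ and the dichotomy is trivial because $\mathrm{Br}(\gamma) \subset \Sing X$. You instead give a self-contained argument showing that $X_\lambda \cap \mathrm{Br}(\gamma)$ is open and closed in the connected set $X_\lambda$, with closedness for free and openness coming from the constancy of the monodromy representation of the link along the stratum. This essentially reproves the needed part of the cited GKP result rather than invoking it, and in doing so makes explicit the role played by \cref{thm:neighborhood stratum,thm:tubular neighborhood} and by maximality of the stratum (which is what forces the link $L$ to be a manifold and keeps deeper strata out of the punctured tube). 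The paper's approach buys brevity; yours buys transparency about the topological mechanism.

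Two small points of precision are worth flagging, although neither is a genuine gap. First, the ``small Stein ball $B_x$'' is better taken to be $f_\lambda^{-1}(V_x)$ for a contractible $V_x \ni x$ in $X_\lambda$, using the Mather tube of \cref{thm:tubular neighborhood}; this makes the inclusion $B_x \cap \Reg X \hookrightarrow U \cap \Reg X$ visibly a product of inclusions $V_x \times L \times (0,\eps) \hookrightarrow V \times L \times (0,1]$ and hence manifestly a homotopy equivalence, whereas for a metric ball one has to argue via the cone of the link $\mathrm{Link}(X,x) \cong \bS^{2d-1} * L$ and check that deleting the $\bS^{2d-1}$ factor leaves something $\simeq L$. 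Second, ``the \'etale extension of such a cover across $B_x$ is unique'' should read ``the normal finite extension is unique'' (via~\cite[Theorem~3.4]{DG94}); the extension is \'etale exactly when $\rho$ is trivial, which is what one is trying to characterize, and when $\rho$ is trivial one should note that normality of $Y$ together with $\codim{Y}{\gamma^{-1}(\Sing X)} \ge 2$ keeps the preimage of $B_x$ disconnected into $N$ sheets, each of degree one, hence \'etale.
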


\begin{proof}
This is a direct application of~\cite[Corollary~3.12]{GKP16}, where $A = \Sing X$.
Let $\lambda \in \Lambda_\maxi$ be any index.
If $X_\lambda \subset A$, then $\overline{X_\lambda}$ equals an irreducible component of $A$ by maximality.
Hence we may apply said result.
Otherwise, $X_\lambda$ must be disjoint from $A$, \emph{i.e.}~contained in $\Reg X$.
So we are in the second case of the dichotomy.
\end{proof}

\begin{rem} \label{rem:canonical stratification}
The notion of stratum and thus of its normal topological type depends on a given stratification.
We will not need it in the sequel, but it has to be noted that any irreducible reduced complex space $X$ has a canonical Whitney stratification $\sS_{\mathrm{can}}$.
This means that if $\sS$ is any other Whitney stratification of $X$, the strata of $\sS_{\mathrm{can}}$ are unions of strata of $\sS$.
The construction of $\sS_{\mathrm{can}}$ is explained in~\cite[Theorem~4.9]{Mather73} (see also~\cite[corollaire~6.1.7]{LeTeissier} where the canonical stratification is defined in terms of polar varieties).
\end{rem}

\section{Bochner principle for reflexive tensors} \label{sec bochner}

In this section, we prove the Bochner principle, \cref{intro:bochner}.
For generalities about \kahler metrics or currents on normal complex spaces, we refer to~\cite[A.1]{Dem85}, \cite[\S~\!5.2]{EGZ}, \cite[\S~\!4.6.1]{BEG} or~\cite[\S~\!3]{TorusQuotients}.
We work in the following setting.

\begin{setup} \label{setup1}
Let $X$ be an $n$-dimensional complex, compact \kahler space with klt singularities such that $K_X$ is numerically trivial. 

We choose a \kahler resolution $\pi \from Y \to X$ such that $\Exc(\pi)$ is an snc divisor $F = \sum_j F_j$, and we fix holomorphic sections $s_j \in \HH0.Y.\O Y(F_j).$ cutting out the smooth component $F_j$.
We set $s_F \defn \otimes_j s_j$ and $Y^\circ \defn Y \setminus F$.

Fix two integers $p, q \ge 0$.
We set $\sE_X \defn \big( \T X^{\tensor p} \tensor \Omega_X^{\tensor q} \big) \ddual$, where $(-) \ddual$ denotes the double dual, and $\sE_Y \defn \T Y^{\tensor p} \tensor \Omega_Y^{\tensor q}$.
One has a natural inclusion of coherent sheaves $\pi_* \sE_Y \subset \sE_X$ and the quotient sheaf $\factor{\sE_X}{\pi_* \sE_Y}$ is torsion, being supported on~$\Sing X$.
Thanks to R\"uckert's Nullstellensatz~\cite[Ch.~3, \S2]{CAS}, there exists an integer $k \ge 1$ such that for any reflexive tensor $\tau \in \HH0.X.\sE_X.$, the section $\pi\big|_{Y^\circ}^* \big( \tau\big|_{\Reg X} \big)$ extends to an element $\sigma \in \HH0.Y.\sE_Y \otimes \O Y(kF).$.

Thanks to the combination of~\cite[Theorem~7.5]{EGZ} and~\cite[Corollary~1.1]{Paun}, there exists in each \kahler class $\alpha\in \HH2.X.\R.$ a unique closed, positive current $\omega_\alpha \in \alpha$ with bounded potentials, smooth on $\Reg X$ and satisfying $\Ric \omega_\alpha  = 0$ on this locus, \emph{cf.}~\cref{rem KE} below. 

The \kahler metric $\omega_\alpha$ on $\Reg X$ induces a smooth, hermitian metric on $\T{\Reg X}$ and in turn on $\sE_X\big|_{\Reg X}$, with Chern connection~$D_{\sE_X}$.
\end{setup}

\begin{rem} \label{rem KE}
Given a \kahler metric $\om_X$ on $X$, it follows from  \cite[Theorem~7.5]{EGZ} that there exists a unique \kahler--Einstein current $\om_\alpha$ in the cohomology class $\{\omega_X\} \in \HH1.X.\mathrm{PH}_X.$ under the connecting map $\HH0.X.\factor{L^{\infty}_X}{\mathrm{PH}_X}. \lto \HH1.X.\mathrm{PH}_X.$. This cohomology group identifies two $(1,1)$-currents with local (bounded) $dd^c$-potentials if and only if they differ by the $dd^c$ of a global (bounded) function.
It is however more convenient to view $\om_\alpha$ in the more familiar cohomology space $\HH2.X.\R.$ using the connecting map arising from the natural exact sequence
\[ 0 \longrightarrow \underline{\R}_X \longrightarrow \O X \xrightarrow{\;\mathrm{Im}\;} \mathrm{PH}_X \longrightarrow 0. \]
It was recently proved in~\cite[Proposition~3.5]{TorusQuotients} that the map $\HH1.X.\mathrm{PH}_X. \to \HH2.X.\R.$ is injective whenever $X$ has rational singularities.
Therefore, a \kahler class $\alpha \in \HH2.X.\R.$ is associated to a unique class of a \kahler metric $\{\omega_X\} \in \HH1.X.\mathrm{PH}_X.$.
This allows us to consider without any ambiguity the \emph{unique} singular Ricci-flat metric $\omega_\alpha$ in $\alpha \in \HH2.X.\R.$ as stated a few lines above. 
\end{rem}

\begin{rem} \label{abundance}
In \cref{setup1} above, the Abundance Conjecture has been proved in~\cite[Corollary~1.18]{JHM2} as an application of results of B.~Wang~\cite{WangB}.
So there exists a \qe cover $X' \to X$ such that $K_{X'}$ is trivial and, in particular, $X'$ has canonical singularities.
We do not rely on this fact to prove the Bochner principle.

Quite the opposite is true: one may use the Bochner principle to prove Abundance in \cref{setup1}, at least under the slightly stronger assumption that $X$ has canonical (as opposed to klt) singularities.
The details of the argument can be found in \cref{abundance c1=0}.
\end{rem}

\begin{thm}[Bochner principle] \label{bochner}
Let $X$ be a compact \kahler space with klt singularities such that $K_X$ is numerically trivial and let $\alpha$ be a \kahler class.
With the notation of \cref{setup1} above, any holomorphic tensor $\tau \in \HH0.\Reg X.\sE_X.$ is parallel with respect to $\omega_\alpha $, \emph{i.e.}~$D_{\sE_X} \tau = 0$ on $\Reg X$.
\end{thm}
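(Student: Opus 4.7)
The plan is to transfer the problem from the non-compact open manifold $(\Reg X, \omega_\alpha)$ to the smooth compact resolution $Y$, where the Bochner--Kodaira--Weitzenb\"ock identity becomes available, and then to pass carefully to the limit. The classical obstacle is that the section $\sigma \defn (\pi|_{Y^\circ})^* \tau$ extends to $Y$ only with poles of order at most $k$ along $F$, while at the same time $\pi^*\omega_\alpha$ degenerates along $F$: the whole argument must balance these two singular behaviours.

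First I would use \cref{setup1} to regard $\tau$ as a holomorphic section $\sigma \in \HH0.Y.\sE_Y \tensor \O Y(kF).$ and fix a smooth hermitian metric $h_F$ on $\O Y(F)$. Next I would construct a two-parameter family of smooth \kahler metrics $\omega_{t,\eps}$ on $Y$ in the class $[\pi^*\omega_\alpha]+t[\omega_Y]$ by solving Monge--Amp\`ere equations
\[
\omega_{t,\eps}^n = c_{t,\eps}\,\frac{(|s_F|_{h_F}^2+\eps^2)^a}{|s_{F'}|_{h_{F'}}^{2b}}\, dV_Y,
\]
where $F'$ and the exponents $a,b$ encode the klt discrepancies on the pair $(X,0)$ pulled back to $Y$. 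The uniform $L^\infty$ estimates of Ko{\l}odziej/EGZ on the potentials, combined with standard higher-order estimates on compact subsets of $Y^\circ$, yield $\omte \to \pi^*\omega_\alpha$ in $C^\infty_{\mathrm{loc}}(Y^\circ)$, while $\Ric(\omte)$ is controlled: it decomposes as a smooth form converging to zero on $Y^\circ$ plus contributions supported in arbitrarily small neighborhoods of $F \cup F'$.

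Equipping $\sE_Y \tensor \O Y(kF)$ with the hermitian metric induced by $\omte$ and $h_F$, the Bochner--Weitzenb\"ock identity for the holomorphic section $\sigma$ on the compact \kahler manifold $(Y, \omte)$ gives, after integration over $Y$,
\[
\int_Y |D\sigma|^2_{\omte, h_F}\, \omte^n = -\int_Y \big\langle i\Theta_{\sE_Y\tensor \O Y(kF)} \cdot \sigma, \sigma\big\rangle_{\omte, h_F} \wedge \omte^{n-1},
\]
where the curvature on the right is a linear combination of contractions of $\Ric(\omte)$ (coming from $\sE_Y$) and of the smooth bounded form $i\Theta_{h_F}$. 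Passing to the limit $(t,\eps)\to 0$, the Ricci part converges smoothly to zero on $Y^\circ$, and the Chern class contribution from $k F$ is supported arbitrarily close to $F$, where the mass of $|\sigma|^2_{\omte}\, \omte^{n-1}$ is tamed by the uniform potential estimate and the prescribed form of the volume form. Both terms on the right-hand side therefore vanish in the limit, and Fatou's lemma applied to the left-hand side yields $\int_{Y^\circ} |D\sigma|^2_{\pi^*\omega_\alpha}\, (\pi^*\omega_\alpha)^n = 0$. This forces $D\sigma = 0$ on $Y^\circ$, that is, $D_{\sE_X}\tau = 0$ on $\Reg X$.

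The main obstacle will be the quantitative matching of the pole order $k$ of $\sigma$ with the degeneration rate of $\omte$ along $F$, so that the curvature term on the right-hand side genuinely vanishes in the limit. This is where one must avoid the use of \gkp covers (which is how \cite{GGK} sidesteps a similar issue in the projective case) and instead rely entirely on the \emph{a priori} estimates of~\cite{EGZ} and~\cite{Paun} for the potentials $\phi_{t,\eps}$, together with a direct analysis of the weighted integrals $\int_Y |\sigma|^2_{\omte, h_F}\, i\Theta_{h_F} \wedge \omte^{n-1}$ near $F$.
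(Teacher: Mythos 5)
Your overall strategy is the right one and matches the paper's: pull $\tau$ back to a section $\sigma$ of $\sE_Y \otimes \O Y(kF)$ on a resolution, build a two-parameter family of approximate Ricci-flat metrics $\om_{t,\eps}$ by regularizing the klt density in the Monge--Amp\`ere equation, run a Bochner-type integration by parts on the compact manifold $Y$, and pass to the limit. However, there are two concrete gaps in the way you set up the integration by parts, and each one would make a step of your argument fail as written.

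First, you equip $\O Y(kF)$ with a \emph{fixed smooth} metric $h_F$. Even granting that this gives $D\sigma = 0$ on $Y^\circ$ in the limit, you cannot conclude that $\tau = \sigma / s_F^{\otimes k}$ is parallel for the connection induced by $\om_\alpha$ alone: dividing by $s_F^{\otimes k}$ only preserves parallelism if $s_F$ is itself parallel, and with a smooth metric $h_F$ the Chern connection satisfies $D s_F = \del \log |s_F|_{h_F}^2 \cdot s_F \ne 0$ on $Y^\circ$. The paper's device is to use instead the regularized metrics $h_{F,\eps} = h_F / \prod_j(|s_j|^2 + \eps^2)$, whose limit $h_{F,0}$ satisfies $|s_F|_{h_{F,0}} \equiv 1$, so that $s_F$ becomes a parallel unit frame on $Y^\circ$ and the line-bundle part of the connection is genuinely trivialized; this is what identifies $D\sigma = 0$ with $D_{\sE_X}\tau = 0$. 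Second, your Weitzenb\"ock identity is applied to $|\sigma|^2$, so vanishing of the curvature term in the limit requires a bound on $\sup_Y |\sigma|^2_{h_{t,\eps}}$ that is uniform in $(t,\eps)$. No such bound is available: the metric that $\om_{t,\eps}$ induces on $\sE_Y$ degenerates (or blows up, on the $\Omega_Y$-factors) along $F$ as $\om_{t,\eps} \to \pi^*\om_\alpha$, and the $L^\infty$ estimate on the potentials controls neither $|\sigma|^2$ nor the weighted integral $\int_Y |\sigma|^2\, i\Theta_{h_F} \wedge \om_{t,\eps}^{n-1}$ near $F$. The paper circumvents this by applying $\ddc$ to $\log(|\sigma|^2+1)$ rather than to $|\sigma|^2$: every curvature term then carries the factor $|\sigma|^2/(|\sigma|^2+1) \le 1$, and the resulting bounds involve only $\beta_{j,\eps}$ and $\tfrac{\eps^2}{|s_j|^2+\eps^2}\om_Y$, whose integrals against $\om_{t,\eps}^{n-1}$ tend to zero because $F_j \cdot (\pi^*\alpha + t\om_Y)^{n-1} \to 0$. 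Without this normalization your claim that ``both terms on the right-hand side vanish in the limit'' is unsubstantiated. (A minor point: the use of \gkp covers in \cite{GGK} is to pass from the Bochner principle for subbundles to the one for tensors by killing a character of the holonomy group; it is not a device for handling the pole order $k$, which is a purely analytic matter treated exactly as above.)
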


\begin{rem}[Comparison with earlier results] \label{comparison}
In~\cite{GGK}, the Bochner principle is proved under the assumption that $X$ is projective.
The proof goes as follows: along the lines of~\cite{GSS}, one can prove the Bochner principle for \emph{bundles}~\cite[Theorem~8.1]{GGK}, stating that a saturated subsheaf of slope zero of a tensor bundle $\sF \subset \sE_X$ is automatically parallel with respect to the \kahler--Einstein metric on the smooth locus.
This does not rely on the projectivity assumption.

To go from bundles to tensors, one considers the line bundle generated by a given tensor $\tau$ on $\Reg X$.
As it is parallel, the holonomy group $G$ acts on it, yielding a character of $G$.
The classification of the holonomy~\cite[Theorem~B]{GGK} shows that up to passing to a \qe cover, $G$ is semi-simple, hence the character is trivial and $\tau$ itself is parallel.
The above classification result relies on the projectivity assumption through the existence of a \gkp cover~\cite[Theorem~1.5]{GKP16} and Druel's integrability result~\cite[Theorem~1.4]{Dru16}.
\end{rem}

\begin{proof}[Proof of \cref{bochner}]
We first need to set up some notation. 

\noindent
\textbf{Hermitian metrics. }
We choose some smooth hermitian metrics $h_j$ on $\O Y(F_j)$. We denote by $\theta_{j}$ the curvature form of that metric, i.e $\theta_j:=i\Theta_{h_{j}}(F_j)$.
Next, we define for any $\eps > 0$ another smooth metric $h_{j,\eps}$ on $\O Y(F_j)$ by 
\[ h_{j,\eps}:=\frac{1}{|s_j|_{h_j}^2+\eps^2}\cdot h_j. \]
The curvature form $\theta_{j,\eps}$ of that metric, \emph{i.e.} $\theta_{j,\eps}:=i\Theta_{h_{j,\eps}}(F_j)$, is given by 
\[ \theta_{j,\eps}= \underbrace{\frac{\eps^2  }{(|s_j|^2+\eps^2)^2}\cdot Ds_j\wedge \overline {D s_j}}_{=:\beta_{j,\eps}}+\underbrace{\frac{\eps^2}{|s_i|^2+\eps^2}\cdot \theta_j}_{=:\gamma_{j,\eps}}. \]
The symbol $D$ above refers to the Chern connection induced by $h_j$ on $\O Y(F_j)$. Finally, we set 
\[ h_F:=\prod_j h_j \quad \mbox{and} \quad h_{F,\eps}=\prod_j h_{j,\eps}=\frac{1}{\prod_j (|s_j|^2+\eps^2)}\cdot h_F; \] they define smooth metrics on $\O Y(F)$.\\

\noindent
\textbf{Approximate \kahler-Einstein metrics. }
We introduce the rational coefficients $a_i> -1$ such that $K_Y=\pi^*K_X+\sum a_i F_i$. We fix a \kahler reference metric $\omega_Y$ on $Y$, and consider, for each $\eps,t>0$, the unique \kahler metric $\omega_{t,\eps}\in \pi^*\alpha+t\{\omega_Y\}$ solution of
\[ \Ric \omega_{t,\eps} = -\sum_j a_j \theta_{j,\eps} \]
Its existence is guaranteed by Yau's solution of the Calabi conjecture~\cite{Yau78}.
In terms of Monge-Amp\`ere equations, if $\omega_X$ is a smooth representative of $\alpha$, then $\omte=\pi^*\om_X+t\om_Y+\ddc \vp_{t,\eps}$ is solution of \[ (\pi^*\om_X+t\om_Y+\ddc \vp_{t,\eps})^n = \frac{e^{-c_{t,\eps}}dV}{\prod (|s_j|^2+\eps^2)^{a_j}} \]
where $dV$ is a fixed smooth volume form such that $\Ric dV= \sum a_j \theta_j$ and $c_{t,\eps}\in \mathbb R$ is a normalizing constant defined by $e^{c_{t,\eps}}=\frac{1}{(\pi^*\alpha+t\{\om_Y\})^n}\int_Y  \frac{dV}{\prod (|s_j|^2+\eps^2)^{a_j}}$.\\

\noindent
\textbf{Curvature formula. }
We set $\sE_Y(kF):=\sE_Y \otimes \O Y(kF)$ and we choose $\sigma \in \HH0.Y.\sE_Y(kF).$ some meromorphic tensor on $Y$. The \kahler metric $\om_{t,\eps}$ induces a smooth hermitian metric $h_{\omega_{t,\eps}}$ on $\sE_Y$. We consider the metric 
\[ h_{t,\eps}:=h_{\omega_{t,\eps}}\otimes h_{F,\eps}^{\otimes k} \quad \mbox{on } \sE_Y(kF) \]
with Chern connection $D$ and set $|\sigma|:=|\sigma|_{h_{t,\eps}}$. 
We have the following Poincar\'e-Lelong type formula

\newcommand{\ud}{|\sigma|^2+1}

\begin{equation} \label{ineq0}
\ddc \log(\ud) = \frac{1}{\ud}\left( |D\sigma|^2-\frac{|\langle D\sigma,\sigma\rangle |^2}{\ud}-\la i\Theta_{h_{t,\eps}}(\sE_Y(kF))\sigma,\sigma\ra \right) 
\end{equation}
Wedging this last inequality with $\om_{t,\eps}^{n-1}$ and integrating it on $X$ yields:
\[ \int_Y  \frac{\la i\Theta_{h_{t,\eps}}(\sE_Y(kF))\sigma,\sigma\ra}{\ud} \wedge \om_{t,\eps}^{n-1} =  \int_Y \frac{1}{\ud}\left( |D\sigma|^2-\frac{|\la D\sigma,\sigma\ra |^2}{\ud} \right)  \wedge \om_{t,\eps}^{n-1} \]
As $|\la D\sigma,\sigma\ra | \le |D \sigma| \cdotp |\sigma|$, we obtain
\begin{equation}
\label{ineq00}
\int_Y  \frac{\la i\Theta_{h_{t,\eps}}(\sE_Y(kF))\sigma,\sigma\ra}{\ud} \wedge \om_{t,\eps}^{n-1} \ge  \int_Y \frac{ |D\sigma|^2}{(\ud)^2} \wedge \om_{t,\eps}^{n-1}
\end{equation}
Now, one has
\[ i\Theta_{h_{t,\eps}}(\sE_Y(kF))=i\Theta_{h_{\omega_{t,\eps}}}(\sE_Y)\otimes \mathrm{Id}_{\O Y(kF)} + \mathrm{Id}_{\sE_Y}\otimes i\Theta_ {h_{F,\eps}^{\otimes k}}(\O Y(kF)). \]

\newcommand{\rr}{^{\otimes p}}

First let us introduce a notation: let $V$ be a complex vector space of dimension $n$, let $p\ge 1$ be an integer, and let $f\in \mathrm{End}(V)$. We denote by $f\rr$ the endomorphism of $V^{\otimes p}$ defined by \[f\rr(v_1 \otimes \cdots \otimes v_p) := \sum_{i=1}^p v_1 \otimes \cdots v_{i-1} \otimes f(v_i) \otimes v_{i+1} \otimes \cdots \otimes v_p\]
Let us add that if $V$ has an hermitian structure and if $f$ is hermitian semipositive, then so is $f\rr$ with the induced metric, and we have $\tr (f\rr) = p n^{p-1} \tr(f)$.
Now we can easily check the following identity:
\[n  i\Theta( T_Y\rr,h_{\om_{t,\eps}})  \wedge \om_{t,\eps}^ {n-1} = (\sharp \Ric \om_{t,\eps})\rr \,\om_{t,\eps}^ n \]
where $\sharp \Ric \om$ is the endomorphism of $T_Y$ induced by $\Ric \om_{t,\eps}$ via $\om_{t,\eps}$. As 
 $\Ric \om =-\sum a_j\theta_{j,\eps}$, we deduce that 
\[ \tr_{\omte} i\Theta_{h_{\omega_{t,\eps}}}(\sE_Y)  = -\sum_j a_j\left[ (\sharp \theta_{j,\eps})\rr \otimes \mathrm{Id}_{{T_Y^*}^{\otimes q}}-   \mathrm{Id}_{{T_Y}^{\otimes p}} \otimes \overline{ (\sharp \theta_{j,\eps})}^{\otimes q}  \right] \]
 while
\[ i\Theta_ {h_{F,\eps}^{\otimes k}}(\O Y(kF)) \wedge \om_{t,\eps}^{n-1}=k\sum_j \theta_{j,\eps}\wedge \om_{t,\eps}^{n-1}. \]
Locally, one can choose a trivialization $e$ of $\O Y(kF)$ and write $\sigma = u \otimes e$ where $u$ is a local section of $\sE_Y$. Then  
\begin{equation}
\label{part0}
\la i\Theta_{h_{t,\eps}}(\sE_Y(kF))\sigma,\sigma\ra =\underbrace{\la i\Theta_{h_{\om_{t,\eps}}}(\sE_Y)u,u\ra |e|_{h_{F,\eps}^{\otimes k}}^2}_{\text{(I)}}+\underbrace{(k\sum_j \theta_{j,\eps})\cdot |\sigma|^2}_{\text{(II)}}\\
\end{equation}

\noindent
\textbf{Computation of Term (I).}\\
This part is entirely similar to~\cite[\S 9]{GGK}.
We have the decomposition 
\[ (\sharp \theta_{j,\eps})\rr  = (\sharp \beta_{j,\eps})\rr+(\sharp \gamma_{j,\eps})\rr \]
along with the inequalities 
\begin{equation}
\label{beta}
0 \le  (\sharp \beta_{j,\eps})\rr  \le \left(\tr_{\rm End} (\sharp \beta_{j,\eps})\rr \right) \mathrm{Id}_{T_Y^{\otimes p}}  
   \le pn^{p-1} \tr_{\om_{t,\eps}} \beta_{j,\eps} \cdot  \mathrm{Id}_{T_Y^{\otimes p}} 
\end{equation}
and 
\begin{equation}
\label{gamma}
 \pm (\sharp \gamma_{j,\eps})\rr \le  \frac{C\eps^2}{|s_j|^2+\eps^2} \cdot (\sharp \om_Y)\rr \le Cpn^{p-1} \cdot \frac{\eps^2}{|s_j|^2+\eps^2} \cdot \tr_{\om_{t,\eps}}  \om_Y\cdot  \mathrm{Id}_{T_Y^{\otimes p}}
\end{equation}
as soon as $C>0$ is large enough so that $\pm \theta_j \le C \om_Y$ for any $j$. 

\noindent
From~\labelcref{beta}, one deduces
\begin{align*}
0 \le \frac{ \la  (\sharp \beta_{j,\eps})\rr \otimes  \mathrm{Id}_{{T_Y^*}^{\otimes q}} u,u\ra |e|_{h_{F,\eps}^{\otimes k}}^2}{|\sigma|^2+1} &\le  pn^{p-1}\tr_{\om_{t,\eps}} \beta_{j,\eps} \frac{|\sigma|^2}{|\sigma|^2+1}\\
& \le  pn^{p-1}\tr_{\om_{t,\eps}} \beta_{j,\eps} 
\end{align*}
while from~\labelcref{gamma}, one deduces
\begin{align*}
\pm \frac{ \la  (\sharp \gamma_{j,\eps})\rr \otimes  \mathrm{Id}_{{T_Y^*}^{\otimes q}} u,u\ra |e|_{h_{F,\eps}^{\otimes k}}^2}{|\sigma|^2+1} &\le   Cpn^{p-1} \cdot \frac{\eps^2}{|s_j|^2+\eps^2} \cdot \tr_{\om_{t,\eps}}  \om_Y \frac{|\sigma|^2}{|\sigma|^2+1}\\
& \le   Cpn^{p-1} \cdot \frac{\eps^2}{|s_j|^2+\eps^2} \cdot \tr_{\om_{t,\eps}}  \om_Y
\end{align*}
so in conclusion, one gets
\begin{equation*}
\pm \frac{ \la  (\sharp \theta_{j,\eps})\rr \otimes  \mathrm{Id}_{{T_Y^*}^{\otimes q}} u,u\ra |e|_{h_{F,\eps}^{\otimes k}}^2}{|\sigma|^2+1} \le C\left[\tr_{\om_{t,\eps}} \beta_{j,\eps} +\frac{\eps^2}{|s_j|^2+\eps^2} \cdot \tr_{\om_{t,\eps}}  \om_Y\right]
\end{equation*}
for some $C$ large enough. Performing the same computations with $ \mathrm{Id}_{{T_Y}^{\otimes p}} \otimes \overline{ (\sharp \theta_{j,\eps})}^{\otimes q}$, one eventually obtains 
\begin{equation}
\label{part1}
\pm \frac{\la i\Theta_{h_{\om_{t,\eps}}}(\sE_Y)u,u\ra |e|_{h_{F,\eps}^{\otimes k}}^2}{|\sigma|^2+1} \wedge \om_{t,\eps}^{n-1} \le C\left[(\beta_{j,\eps} +\frac{\eps^2}{|s_j|^2+\eps^2}\cdot \om_Y)\wedge \om_{t,\eps}^{n-1}\right]
\end{equation}

\bigskip

\noindent
\textbf{Computation of Term (II).}

\noindent
By the same token as before, one can decompose
\[ \theta_{j,\eps}\cdot \frac{|\sigma|^2}{|\sigma|^2+1}\wedge \om_{t,\eps}^{n-1}=  \underbrace{\frac{|\sigma|^2}{|\sigma|^2+1}\cdot  \beta_{j,\eps}\wedge \om_{t,\eps}^{n-1}}_{\text{(III)}}+ \underbrace{\frac{|\sigma|^2}{|\sigma|^2+1}\cdot \gamma_{j,\eps}\wedge \om_{t,\eps}^{n-1}}_{\text{(IV)}} \]
and write $0 \le \text{(III)} \le \beta_{j,\eps} \wedge \om_{t,\eps}^{n-1}$ and $\pm \text{(IV)} \le \frac{C\eps^2}{|s_j|^2+\eps^2}\cdot \om_Y\wedge \om_{t,\eps}^{n-1}$
to obtain
\begin{equation}
\label{part2}
  \theta_{j,\eps}\cdot \frac{|\sigma|^2}{|\sigma|^2+1}\wedge \om_{t,\eps}^{n-1}\le C\left[(\beta_{j,\eps} +\frac{\eps^2}{|s_j|^2+\eps^2}\cdot \om_Y)\wedge \om_{t,\eps}^{n-1}\right]
\end{equation}

\noindent
Putting~\labelcref{part1} and~\labelcref{part2} together, one sees from~\labelcref{part0} that
\begin{equation}
\label{final}
\pm \frac{\la i\Theta_{h_{t,\eps}}(\sE_Y(kF))\sigma,\sigma\ra}{|\sigma|^2+1}\wedge \om_{t,\eps}^{n-1}\le C\left[\sum_j \Big(\beta_{j,\eps} +\frac{\eps^2}{|s_j|^2+\eps^2}\cdot \om_Y\Big)\wedge \om_{t,\eps}^{n-1}\right]
\end{equation}
Finally, one has 
\begin{align*}
\int_Y \beta_{j,\eps} \wedge \om_{t,\eps}^{n-1} &=\int_Y \theta_{j,\eps} \wedge \om_{t,\eps}^{n-1}- \int_Y \gamma_{j,\eps} \wedge \om_{t,\eps}^{n-1} \\
& \le F_j \cdot (\pi^*\alpha+t{\om_Y})^{n-1} + C \int_{Y}\frac{\eps^ 2}{|s_j|^2+\eps^ 2} \, \om_Y \wedge \om_{t,\eps}^ {n-1}
\end{align*}
so that~\labelcref{final} combined with \cite[Claim~9.5]{GGK} show that 
\begin{equation}
\label{limit}
\varlimsup_{t\to 0} \varlimsup_{\eps \to 0}\int_Y \frac{\la i\Theta_{h_{t,\eps}}(\sE_Y(kF))\sigma,\sigma\ra}{|\sigma|^2+1}\wedge \om_{t,\eps}^{n-1} = 0.
\end{equation}

\bigskip

\noindent
\textbf{Conclusion.}

\noindent
When $\eps,t$ go to zero, $\om_{t,\eps}$ converge weakly to $\pi^*\omega_\alpha $, and the convergence is smooth on $Y^\circ$. Inequality~\labelcref{ineq00} combined with Fatou lemma ensure that $\sigma$ has zero covariant derivative on $Y^\circ$ with respect to the smooth hermitian metric $h_{\pi^*\omega_\alpha } \otimes h_{F,0}^{\otimes k}$ on $\sE_Y(kF)|_{Y^\circ}$. Now, it follows from the definition of $h_{F,0}$ that $|s_F|_{h_{F,0}}\equiv 1$ on $Y$ so that $s_F$ is parallel with respect to $h_{F,0}$ on $Y^{\circ}$. This implies that $\sigma/s_F^{\otimes k}$ is parallel with respect to $h_{\pi^*\omega_\alpha }$ on $Y^\circ$, hence $\tau$ is parallel with respect to $h_{\omega_\alpha}$ on $\Reg X$. 
\end{proof}

%

\begin{rem} \label{poles}
In \cref{bochner} above, if $\sE_X = \T X$ or $\sE_X = \Omegar Xp$ for some $1 \le p \le n$, then one can choose the resolution $\pi$ such that the tensor $\tau$ pulls back to a holomorphic tensor on the resolution (\emph{i.e.}~it does not acquire any poles along $F$).
This follows from the existence of a functorial resolution of singularities in the first case (see \emph{e.g.}~\cite[Proposition~3.9.1]{Kol07}) and from~\cite{KS} in the second one.
In such a case, the term $\text{(II)}$ in the proof above disappears and the proof reduces to the computations of~\cite{GSS}.
We do, however, need the Bochner principle also for more general tensors.
\end{rem}

\section{Structure of the Albanese map} \label{section:albanese}

For generalities on the Albanese map for singular spaces, see~\cite[\S 3]{AlgApprox}.
We prove here the exact analogue of~\cite[Theorem~8.3]{Kaw85} in the \kahler setting.
See also~\cite[Theorem~1.10]{AlgApprox} for the three-dimensional case.
We then proceed to prove the existence of torus covers (\cref{intro:tc}) and some immediate corollaries.

\begin{thm}[Albanese splits after base change] \label{alb splits}
Let $X$ be a normal compact \kahler space with canonical singularities such that $\cc1X = 0$.
Let
\[ \alpha \from X \lto A \defn \Alb(X) \]
be the Albanese map of $X$.
Then there exists a finite \'etale cover $A_1 \to A$ such that $X \x_A A_1$ is isomorphic to $F \x A_1$ over $A_1$, where $F$ is connected.
In particular, $\alpha$ is a surjective analytic fibre bundle with connected fibres, and $q(X) \le \dim X$.
\end{thm}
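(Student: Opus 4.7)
\emph{Plan of proof.} The strategy is to use the Bochner principle to produce parallel holomorphic vector fields on the smooth locus, integrate them to a complex torus action on $X$, and exploit this action to trivialize $\alpha$ after a finite étale base change.

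First, by \cref{rem KS} the space $\HH0.X.\Omegar X1.$ has dimension $q(X) =: q$, spanned by the pullbacks $\omega_1 := \alpha^*\eta_1, \ldots, \omega_q := \alpha^*\eta_q$ of a basis $\eta_1, \ldots, \eta_q$ of $\HH0.A.\Omegap A1.$. By \cref{bochner}, each $\omega_i$ is parallel on $\Reg X$ with respect to the singular Ricci-flat metric $\omega$. Since parallel transport is a $\C$-linear isomorphism of cotangent spaces, the pointwise span of $\omega_1(x), \ldots, \omega_q(x)$ has constant dimension $q$ on $\Reg X$; hence $d\alpha$ has constant rank $q$ there, so $\alpha\big|_{\Reg X}$ is a submersion. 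Consequently $\alpha(\Reg X) \subset A$ is open, and closedness of $\alpha(X)$ in the irreducible $A$ forces surjectivity of $\alpha$, giving $q(X) \le \dim X$.

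Next, the metric $\omega$ provides a parallel identification $\T{\Reg X} \isom \Omegap{\Reg X}1$, so the duals $\xi_1, \ldots, \xi_q$ of the $\omega_i$ are pairwise commuting, parallel, holomorphic Killing vector fields on $\Reg X$. Since $\T X$ is reflexive and $\Sing X$ has codimension $\ge 2$, each $\xi_i$ extends to a global section of $\T X$; these integrate to a holomorphic action $\C^q \acts X$. The Killing property together with a compactness statement for the isometry group of a compact \kahler--Ricci-flat space---extending Myers--Steenrod/Lichnerowicz to the singular setting---shows that the closure $T$ of the image in $\Autn(X)$ is compact. Being connected and abelian, $T$ is a complex torus; the dimension count follows from the linear independence of the $\xi_i$. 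Via $\alpha$, the $T$-action descends to the translation action on $A$, yielding a surjective morphism $\psi \from T \to A$ of tori of equal dimension, hence an isogeny with finite kernel $K$.

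Finally, to check that $\alpha$ has connected fibres, consider the Stein factorisation $\alpha = \beta \circ \gamma$. The $T$-action on $X$ descends functorially to a $T$-action on the Stein target $\wt A$, which is transitive (since $T \to A$ is surjective and $X$ is connected, $K$ acts transitively on the components of any fibre of $\alpha$). Thus $\wt A$ is a homogeneous space of the torus $T$, hence itself a complex torus, and the universal property of the Albanese forces $\beta$ to be an isomorphism. Setting $F := \alpha\inv(0)$ (now connected), the orbit map $T \x F \to X$, $(t,x) \mapsto t \cdot x$, is $K$-equivariant for the diagonal action $k \cdot (t,x) = (tk\inv, kx)$ and induces a biholomorphism $(T \x F)/K \bij X$. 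Base-changing $\alpha$ along $\psi$ then yields $X \x_A T \bij T \x F$ over $T$, which proves the theorem with $A_1 := T$. The main obstacle is the middle paragraph: extracting the compact complex torus $T$ requires combining the Killing property supplied by \cref{bochner} with a compactness result for automorphisms of compact \kahler spaces with Ricci-flat metric, which in the singular setting calls for some care.
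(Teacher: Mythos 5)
Your overall architecture is close to the paper's: both proofs produce a $q(X)$-dimensional compact complex torus acting on $X$ that covers the translation action on $A$, use it to trivialize $\alpha$ after base change along the resulting isogeny, and then dispose of the Stein factorization. Your first paragraph (surjectivity and $q(X)\le\dim X$ from the fact that the parallel forms $\alpha^*\eta_i$ have constant pointwise rank $q$ on $\Reg X$) is a correct variant of what the paper gets as a by-product of the trivialization, and your last paragraph is essentially the paper's argument.

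The genuine gap is the step you yourself flag: the compactness of the closure $T$ of the image of $\C^{q}$ in $\Autn(X)$. A Myers--Steenrod/Lichnerowicz statement for isometries of $(\Reg X,\omega_\alpha)$ is not available off the shelf here: the metric lives on the incomplete manifold $\Reg X$, so one would need to pass to the metric completion and control how automorphisms of $X$ interact with it, none of which is set up in this paper; without it your torus $T$ does not exist and the isogeny $\psi\from T\to A$ cannot be constructed. The paper sidesteps this entirely: since $X$ has canonical singularities and $\cc1X=0$ it is not uniruled, so $\Autn(X)$ is \emph{already} a compact complex torus by Fujiki's theorem, and the Bochner principle is used only to show that the contraction pairing $\HH0.X.\T X. \x \HH0.X.\Omegar X1. \to \C$ is perfect, \emph{i.e.}~that the differential of the translation homomorphism $\phi\from\Autn(X)\to A$ is an isomorphism; this makes $A_1\defn\Autn(X)\to A$ the desired finite \'etale cover. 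Even granting compactness, two further points in your middle paragraph would need repair: the closure of the image of $\C^{q}$ need not be a complex subgroup, and its dimension is not bounded above by the linear independence of the $\xi_i$ (a dense one-parameter subgroup can close up to something strictly larger). Both are resolved once one knows $\dim\Autn(X)=h^{0}(X,\T X)=q(X)$ --- which is again exactly the paper's perfect-pairing computation, so the cleanest fix for your argument is to adopt that step.
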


\begin{rem-plain}
Under the stronger assumption that $K_X$ is torsion, the fact that $\alpha$ is surjective with connected fibres has already been shown in~\cite[Theorem~24]{Kaw81} and~\cite[Theorem~5.1 and Proposition~5.3]{Campana04}.
Since these results are stated for manifolds, one needs to apply them to a smooth model $Y \to X$.
This is possible since $K_Y$ is a \Q-effective exceptional divisor and so $Y$ has vanishing Kodaira dimension.

On the other hand, we can use \cref{alb splits} to prove that $K_X$ is torsion under the assumptions of that theorem.
See \cref{abundance c1=0}.
\end{rem-plain}

\begin{proof}[Proof of \cref{alb splits}]
By the universal property of the Albanese torus, for every $g \in \Aut(X)$ there exists a unique $\phi_g\!: A \to A$ such that $\phi_g \circ \alpha = \alpha \circ g$.
If $g \in \Autn(X)$, the identity component, then $g$ acts trivially on $\HH1.X.\C.$ and in particular on $\HH0.X.\Omegar X1.\dual$.
Consequently, the linear part of $\phi_g$ is the identity.
That is, $\phi_g$ is a translation by some element $\phi(g) \in A$.
We have thus defined a homomorphism of complex Lie groups $\phi \from \Autn(X) \to A$, which we will now show to be the desired cover $A_1 \to A$.
First of all, since $X$ has canonical singularities, it is not uniruled.
Hence $\Autn(X)$ is a complex torus by~\cite[Proposition~5.10]{Fuj78}.
It is thus sufficient to show that the induced Lie algebra map $\d\phi \from \HH0.X.\T X. \to \HH0.X.\Omegar X1.\dual$ is an isomorphism.
One checks easily that $\d\phi$ is given by the natural contraction pairing
\begin{equation} \label{contraction}
\HH0.X.\T X. \x \HH0.X.\Omegar X1. \lto \HH0.X.\O X. = \C.
\end{equation}
Fix a \kahler class $a \in \HH2.X.\R.$ and consider the associated Ricci-flat metric $\omega_a$ as in \cref{sec bochner}.
Let $0 \ne \vec v \in \HH0.X.\T X.$ be a nonzero holomorphic vector field.
Due to the Bochner principle, \cref{bochner}, $\vec v$ is parallel with respect to the Chern connection $D$ induced by $\omega_a$.
Dualizing using this metric, $\vec v$ gives rise to a parallel (hence holomorphic, as $D^{0,1} = \bar\del$) $1$-form $\alpha$ on $X$.
Clearly the contraction $\iota_{\vec v} \alpha \ne 0$.
The argument can also be read backwards, hence~\labelcref{contraction} is a perfect pairing and $\d\phi$ is an isomorphism.

Next, we show that the family $f_1 \from X \x_A A_1 \to A_1$ is trivial.
To this end, set $F \defn \alpha\inv(0)$.
Recalling that $A_1 = \Autn(X)$, we define a map $\wb\eta \from F \x A_1 \to X \x A_1$ by sending $(x, g) \mapsto \big( g(x), g \big)$.
Since $\alpha(g(x)) = \phi_g(\alpha(x)) = \phi_g(0) = \phi(g)$, the map $\wb\eta$ factors through the fibre product as $\eta \from F \x A_1 \to X \x_A A_1$.
The latter map has an inverse given by $\eta\inv(x, g) = \big( g\inv(x), g \big)$.
By construction, $\eta$ is a morphism over~$A_1$, \emph{i.e.}~it preserves the fibres of the projections to $A_1$.
It follows that $X \x_A A_1$ is isomorphic to $F \x A_1$ over $A_1$, as claimed.

What we have observed so far already implies that $X \to A$ is a surjective analytic fibre bundle, because the base change to $A_1$ has these properties.
It remains to see that the fibre $F$ is connected.
For this, consider the Stein factorization $X \xrightarrow{\;\beta\;} B \xrightarrow{\;\gamma\;} A$ of $\alpha$.
We claim that $\gamma$ is \'etale.
The question is local on $A$, so we may pass to a small open subset $U \subset A$ such that $\alpha\inv(U) \isom U \x F$ over $U$.
Then there is an isomorphism $\alpha_* \O{\alpha\inv(U)} \isom \O U^{\oplus N}$ in the category of finitely presented $\O U$-algebras, where $N$ is the number of connected components of $F$.
By construction of the Stein factorization, it follows that $\gamma\inv(U)$ is a disjoint union of $N$ copies of~$U$.
In particular, $\gamma$ is \'etale.

Consequently, $B$ is a complex torus too.
By the universal property of $A$, the map $X \xrightarrow{\;\beta\;} B$ factors via $A$ and we get a section $A \to B$ of $\gamma$.
This means that $\gamma$ is an isomorphism, \emph{i.e.}~$B = A$.
By the definition of Stein factorization, $X \to A = B$ has connected fibres.

The last statement, $q(X) \le \dim X$, follows from the surjectivity of $\alpha$ as follows:
\[ q(X) = \hh1.Y.\O Y. = \hh0.Y.\Omegap Y1. = \dim \Alb(Y) = \dim A \le \dim X, \]
where $Y \to X$ is a resolution of singularities.
\end{proof}

\begin{cor}[Torus covers] \label{torus cover}
Let $X$ be a normal compact \kahler space with klt singularities such that $\cc1X = 0$.
Then:
\begin{enumerate}
\item\label{tc.0} The augmented irregularity $\wt q(X) \le \dim X$.
In particular, it is finite.
\end{enumerate}
Furthermore, there exist normal compact \kahler spaces $T$ and $Z$ with canonical singularities together with a \qe cover $\gamma \from T \x Z \lto X$ such that:
\begin{enumerate}
\item\label{tc.1} $T$ is a complex torus of dimension $\wt q(X)$.
\item\label{tc.2} The canonical sheaf of $Z$ is trivial, $\can Z \isom \O Z$.
\item\label{tc.3} The augmented irregularity of $Z$ vanishes, $\wt q(Z) = 0$.
\end{enumerate}
Any cover $\gamma$ as above will be called a \emph{torus cover} of $X$.
\end{cor}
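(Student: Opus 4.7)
The strategy has four steps: \textit{(i)} use \cref{abundance} to reduce any quasi-\'etale cover of $X$ to a canonical model with trivial canonical class, \textit{(ii)} bound $\wt q(X)$ by $\dim X$ via \cref{alb splits}, \textit{(iii)} extract a quasi-\'etale cover $X_2 \to X$ realizing the supremum $\wt q(X)$, and \textit{(iv)} split off a torus factor from the Albanese of $X_2$.

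For \textit{(i)} and \textit{(ii)}: let $X' \to X$ be an arbitrary quasi-\'etale cover. It is again klt with $c_1=0$, so \cref{abundance} furnishes a further quasi-\'etale cover $X'' \to X'$ with $X''$ canonical and $\can{X''} \isom \O{X''}$. Pullback of reflexive $1$-forms along the generically \'etale map $X'' \to X'$ is injective, so $q(X') \le q(X'')$, and \cref{alb splits} gives $q(X'') \le \dim X'' = \dim X$. Taking the supremum over all quasi-\'etale covers of $X$ proves~(0). Since the supremum is a finite supremum of nonnegative integers, it is attained by some quasi-\'etale cover which, after an additional application of \cref{abundance}, we may take to be a space $X_2$ with canonical singularities, $\can{X_2} \isom \O{X_2}$, and $q(X_2) = \wt q(X)$.

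For \textit{(iv)}: applying \cref{alb splits} to $X_2$, the Albanese $\alpha \from X_2 \to A$ satisfies $\dim A = q(X_2) = \wt q(X)$, and there is a finite \'etale cover $A_1 \to A$ such that $X_2 \x_A A_1 \isom F \x A_1$ over $A_1$, with $F = \alpha\inv(0)$ connected. Set $T \defn A_1$ and $Z \defn F$. Then $T \x Z = F \x A_1 \to X_2 \to X$ is quasi-\'etale (the first map is \'etale as the base change of $A_1 \to A$, the second is quasi-\'etale), $T$ is a complex torus of dimension $\wt q(X)$, and $T \x Z$ is \kahler as a quasi-\'etale cover of $X$; restricting its \kahler form to any fibre $\set{t} \x Z$ realizes $Z$ as a \kahler space. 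Since $\can{X_2}$ is trivial and $T \x Z \to X_2$ is \'etale, $\can{T \x Z}$ is trivial; combining with $\can T \isom \O T$ via K\"unneth yields $\can Z \isom \O Z$. Furthermore $Z$ inherits canonical singularities from $X_2$, because $\alpha$ is an analytic fibre bundle over the smooth space $A$, so locally $X_2 \isom U \x Z$ with $U$ smooth, and canonical singularities of a product with a smooth factor coincide with canonical singularities of the other factor. Finally, any quasi-\'etale cover $Z' \to Z$ yields a quasi-\'etale cover $T \x Z' \to T \x Z \to X$; by maximality of $q(X_2)$ one has $\dim T + q(Z') = q(T \x Z') \le \wt q(X) = \dim T$, whence $q(Z') = 0$. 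Therefore $\wt q(Z) = 0$.

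The only conceptually delicate point is \textit{(ii)}: proving that the supremum $\wt q(X)$ is finite and attained. This rests entirely on the integrality of the irregularity and on the sharp dimensional bound of \cref{alb splits}, which in turn is a consequence of the Bochner principle (\cref{bochner}). Once $X_2$ has been chosen, all remaining properties of $T$ and $Z$ are forced by the Albanese splitting and the maximality of $q(X_2)$.
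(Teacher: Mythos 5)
Your proof is correct and follows essentially the same route as the paper's: bound $q$ of all quasi-\'etale covers by $\dim X$ via \cref{alb splits} after reducing to canonical singularities with trivial canonical sheaf (the paper does this once via an index-one cover plus the invariance \cref{wt q invar}, you do it cover-by-cover via \cref{abundance} — an immaterial difference), pick a cover attaining $\wt q(X)$, and split off the torus with \cref{alb splits}, with (\labelcref{tc.3}) following from maximality exactly as in the paper. The extra verifications you supply ($Z$ K\"ahler, $Z$ canonical, K\"unneth for $\can{T\times Z}$) are correct details the paper leaves implicit.
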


\begin{proof}
The proof closely follows along the lines of~\cite[Proposition~7.5]{GGK}, but with different references.
For later reference, and also as a courtesy to the reader, we give the argument here.

To begin with, note that by~\cite[Corollary~1.18]{JHM2}, the canonical divisor $K_X$ is torsion.
We may thus consider an index one cover $X_1 \to X$.
This has the additional property that $K_{X_1}$ is trivial (in particular, Cartier) and hence $X_1$ has canonical singularities.
That is, $X_1$ satisfies the assumptions of \cref{alb splits}.

For~\labelcref{tc.0}, note that $\wt q(X) = \wt q(X_1)$ by \cref{wt q invar}.
It is therefore sufficient to prove $\wt q(X_1) \le \dim X_1 = \dim X$.
But any \qe cover $X' \to X_1$ reproduces the assumptions of \cref{alb splits}, hence $q(X') \le \dim X' = \dim X_1$ by that result.
The claim follows by taking the supremum over all \qe covers of $X_1$.

Next, we construct a torus cover $\gamma$ as a sequence of quasi-\'etale covers as follows:
\[ T \x Z = X_3 \lto X_2 \lto X_1 \lto X. \]
The first map, $X_1 \to X$, is still the index one cover.
We have already seen that $\wt q(X_1)$ is finite.
Choose now a \qe cover $X_2 \to X_1$ that realizes this augmented irregularity, $q(X_2) = \wt q(X_1)$.
Finally, apply \cref{alb splits} to $X_2$ in order to obtain a further cover $X_3 \to X_2$ that splits off a torus as $X_3 = T \x Z$.
We need to show that~\labelcref{tc.1}--\labelcref{tc.3} hold.

By construction, $\dim(T) = q(X_3) = q(X_2) = \wt q(X_1) = \wt q(X)$, which proves~\labelcref{tc.1}.
For~\labelcref{tc.2}, since $\can{X_1}$ is already trivial, the same is true of $\can{T \x Z}$ and hence also of $\can Z$.
Finally, if there was a cover $Z' \to Z$ with $q(Z') > 0$, then $T \x Z' \to X$ would be a cover with irregularity $\dim T + q(Z') > \wt q(X)$, which contradicts the definition of $\wt q(X)$.
This shows~\labelcref{tc.3}.
\end{proof}

\begin{lem}[Invariance of $\wt q$] \label{wt q invar}
Let $Y \to X$ be a quasi-\'etale cover of normal compact complex spaces.
Then $\wt q(Y) = \wt q(X)$.
\end{lem}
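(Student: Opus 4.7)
The plan is to prove the two inequalities $\wt q(Y) \le \wt q(X)$ and $\wt q(Y) \ge \wt q(X)$ separately; the second will be the harder one.

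For $\wt q(Y) \le \wt q(X)$, I would simply verify that composition of \qe covers is \qe. Given any \qe cover $\wt Y \to Y$ (with $\wt Y$ normal and connected per \cref{def:cover}), the composition $\wt Y \to Y \to X$ is finite and surjective between normal connected complex spaces; \'etaleness in codimension one is preserved under composition because the preimage under a finite morphism of normal spaces of a codimension $\ge 2$ subset still has codimension $\ge 2$. Hence $q(\wt Y) \le \wt q(X)$, and taking the supremum over $\wt Y$ yields the claim.

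For the reverse inequality, given an arbitrary \qe cover $\wh X \to X$, the plan is to construct a normal connected complex space $W$ fitting into a commutative square
\[
\xymatrix{
W \ar[r] \ar[d] & \wh X \ar[d] \\
Y \ar[r] & X
}
\]
in which both vertical and both horizontal arrows are quasi-\'etale, and then to establish the monotonicity $q(W) \ge q(\wh X)$. To construct $W$, let $U \subset X$ be the common \'etale locus of the two maps $\wh X \to X$ and $Y \to X$; its complement in $X$ has codimension $\ge 2$. Over $U$ the fibre product $\wh X|_U \times_U Y|_U$ is \'etale over each factor and, since $\wh X|_U$ and $Y|_U$ are connected, each of its connected components surjects onto both. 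Taking $W$ to be the normalization of (the closure of) such a component, both projections $W \to \wh X$ and $W \to Y$ are then \qe covers.

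The main obstacle is the monotonicity statement $q(W) \ge q(\wh X)$. To establish it, I would fix a resolution $\pi \from \wh{X}' \to \wh X$, take the normalization of a suitable connected component of $W \times_{\wh X} \wh{X}'$ (ensuring surjectivity onto $\wh{X}'$), and resolve further to obtain a smooth compact complex manifold $\wh W$ with a bimeromorphic map $\wh W \to W$ and a generically finite surjective holomorphic map $f \from \wh W \to \wh{X}'$. For such an $f$ between compact complex manifolds of the same dimension, the composition $f_* \circ f^*$ acts as multiplication by $\deg(f) \ne 0$ on Dolbeault cohomology, so the pullback $f^* \from H^1(\wh{X}', \mathscr{O}_{\wh{X}'}) \to H^1(\wh W, \mathscr{O}_{\wh W})$ is injective. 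This gives $q(\wh X) \le q(W) \le \wt q(Y)$, and passing to the supremum over $\wh X$ completes the argument.
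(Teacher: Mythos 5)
Your proof is correct and follows essentially the same route as the paper: the inequality $\wt q(Y) \le \wt q(X)$ via composition of quasi-\'etale covers, and the reverse inequality by forming (a connected component of) the normalized fibre product of a given quasi-\'etale cover $\wh X \to X$ with $Y \to X$, which dominates $\wh X$ quasi-\'etalely. The one place where you do more than the paper is the monotonicity step: the paper simply writes $q(Z) \ge q(X')$ for a quasi-\'etale cover $Z \to X'$ without comment, whereas you justify it by passing to resolutions and invoking injectivity of pullback on $H^1(\cdot,\O{})$ for a generically finite surjective map of compact complex manifolds (via Serre duality and the change-of-variables formula). That is a worthwhile addition here, since the lemma makes no K\"ahler hypothesis, so Hodge-theoretic shortcuts (holomorphic $1$-forms pull back injectively) are not directly available; the trace/Serre-duality argument you sketch is the right level of generality. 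Two tiny points you leave implicit but which are standard: each connected component of the \'etale fibre product over $U$ does indeed surject onto both factors (finite \'etale image is open and closed in a connected target), and $q$ is independent of the chosen resolution because $R^{>0}\pi_*\O{}$ vanishes for proper bimeromorphic maps of manifolds.
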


\begin{proof}
Every \qe cover $Y' \to Y$ is, by composition, also a \qe cover of $X$.
The inequality $\wt q(Y) \le \wt q(X)$ is therefore obvious.
For the other direction, let $X' \to X$ be any \qe cover of $X$, and consider the normalized fibre product diagram
\[ \xymatrix{
Z \ar[rr] \ar[d] & & Y \ar[d] \\
X' \ar[rr] & & X.
} \]
Since all the maps in this diagram are \qe covers, we obtain
\[ \wt q(Y) \ge q(Z) \ge q(X'). \]
The claim follows by taking the supremum over all \qe $X' \to X$.
\end{proof}

\begin{prp}[Uniqueness of torus cover] \label{uniqueness torus cover}
In the setting of \cref{torus cover}, the spaces $T$ and $Z$ are unique up to \qe cover, in the following sense:
Suppose that $\gamma' \from T' \x Z' \to X$ is another torus cover of $X$.
Then:
\begin{enumerate}
\item\label{utc.1} The complex tori $T$ and $T'$ are isogeneous.
\item\label{utc.2} There is a common \qe cover $Z \leftarrow Z'' \to Z'$, where $Z''$ is likewise compact, connected and has canonical singularities.
\end{enumerate}
\end{prp}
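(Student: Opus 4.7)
The strategy is to construct a single torus cover that dominates both $\gamma$ and $\gamma'$, and then to show that the two comparison maps split as products; this will simultaneously produce the isogeny of tori and the common quasi-\'etale cover of the $Z$-factors.

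\textbf{Step 1 (common cover).} First I would form a connected component $W$ of the normalized fibre product $(T \x Z) \x_X (T' \x Z')$. Both projections from $W$ are quasi-\'etale, so $W \to X$ is also quasi-\'etale; by \cref{wt q invar} this gives $\wt q(W) = \wt q(X) = \dim T = \dim T'$. Applying \cref{torus cover} to $W$ produces a torus cover $T_W \x Z_W \to W$ with $\dim T_W = \wt q(X)$ and $Z_W$ a compact connected K\"ahler space with canonical singularities, trivial canonical sheaf, and $\wt q(Z_W) = 0$. Let $F \colon T_W \x Z_W \to T \x Z$ and $F' \colon T_W \x Z_W \to T' \x Z'$ denote the resulting composed quasi-\'etale covers.

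\textbf{Step 2 (isogeny of tori).} Since $q(Z) = q(Z_W) = 0$, the Albanese of $T \x Z$ is $T$ and that of $T_W \x Z_W$ is $T_W$, each realized by the first projection. By functoriality of the Albanese, $p_T \circ F = f \circ p_{T_W}$ for a unique morphism $f \colon T_W \to T$; surjectivity of $F$ together with $\dim T_W = \dim T$ forces $f$ to be an isogeny. The same reasoning applied to $F'$ yields an isogeny $f' \colon T_W \to T'$, and this establishes part~(1).

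\textbf{Step 3 (product structure via rigidity).} It remains to show that $p_Z \circ F$ factors through $p_{Z_W}$. For each $t \in T_W$, restricting $p_Z \circ F$ to $\{t\} \x Z_W$ yields a morphism $h_t \colon Z_W \to Z$. The Albanese compatibility from Step~2 implies that $F$ sends $\{t\} \x Z_W$ into $\{f(t)\} \x Z$, and a dimension count using finiteness of $F$ shows that each $h_t$ is finite surjective. For $t$ outside a proper subvariety of $T_W$, the ramification of $h_t$ lies in the trace of the ramification of $F$ on $\{t\} \x Z_W$ and has codimension at least $2$, so $h_t$ is quasi-\'etale and hence $h_t^{[*]} \T Z \isom \T{Z_W}$. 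The tangent space to $\mathrm{Mor}(Z_W, Z)$ at such an $h_t$ is then $\HH0.Z_W.\T{Z_W}.$, which vanishes: the proof of \cref{alb splits} shows that the canonical homomorphism $\Autn(Z_W) \to \Alb(Z_W)$ is an isomorphism of complex tori (via the Bochner argument), but $\Alb(Z_W) = 0$ since $q(Z_W) = 0$, so $\HH0.Z_W.\T{Z_W}. = \mathrm{Lie}\,\Autn(Z_W) = 0$. Therefore $h_t$ is an isolated point in its connected component of $\mathrm{Mor}(Z_W, Z)$, and as $T_W$ is connected the holomorphic family $t \mapsto h_t$ must be constant. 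Consequently $F = f \x g$ for some morphism $g \colon Z_W \to Z$, which is quasi-\'etale since $F$ is quasi-\'etale and $f$ is \'etale. Running the identical argument for $F'$ gives a quasi-\'etale cover $g' \colon Z_W \to Z'$, and so $Z'' \defn Z_W$ works in part~(2).

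The main obstacle is Step~3: one must first verify that $h_t$ is quasi-\'etale at some generic $t$, using a codimension estimate on the ramification of $F$; second, obtain infinitesimal rigidity at such a single value of $t$ via the vanishing of $\HH0.Z_W.\T{Z_W}.$; and finally promote this to global constancy of $t \mapsto h_t$ by invoking that a holomorphic family from a connected base landing on an isolated point of its connected component in $\mathrm{Mor}(Z_W, Z)$ is necessarily constant.
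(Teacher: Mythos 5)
Your argument is correct, and for part~\labelcref{utc.2} it takes a genuinely different route from the paper. The paper also starts from the normalized fibre product $Y$ and proves~\labelcref{utc.1} exactly as you do in Step~2 (maximality of $q$ forces $\alb(p)\from\Alb(Y)\to T$ and $\alb(p')\from\Alb(Y)\to T'$ to be isogenies); your detour through an auxiliary torus cover $T_W\x Z_W\to W$ is harmless but not needed for this part. For~\labelcref{utc.2}, however, the paper never splits the comparison maps: it simply takes $Z''\defn\alb_Y\inv(s)$ for a point $s\in\Alb(Y)$, notes that $Z''$ is connected with canonical singularities by \cref{alb splits}, that Albanese functoriality forces $p(Z'')\subset\set t\x Z$ with equality by a dimension count, and that the restriction $Z''\to Z$ is a quasi-\'etale cover. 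You instead prove the stronger statement that $F=f\x g$ is globally a product, via rigidity: the family $t\mapsto h_t$ lands in $\mathrm{Mor}(Z_W,Z)$, whose tangent space at a quasi-\'etale $h_t$ is $\Hom h_t^*\Omega_Z.\O{Z_W}. = \HH0.Z_W.(h_t^*\Omega_Z)\dual. \isom \HH0.Z_W.\T{Z_W}. = 0$ (the middle identification uses that both reflexive sheaves agree on the big open \'etale locus --- worth saying explicitly, since $h_t^*\T Z$ itself is not the tangent space when $Z$ is singular), and the vanishing $\HH0.Z_W.\T{Z_W}.=0$ follows from the Bochner principle and $q(Z_W)=0$ exactly as you say. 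Your genericity argument for quasi-\'etaleness of $h_t$ (components of $\mathrm{Ram}(F)$ either fail to dominate $T_W$ or have fibres of codimension $\ge 2$) is sound, and constancy of $t\mapsto h_t$ then follows since $T_W$ is connected. What your approach buys is the splitting of the comparison map itself, at the cost of invoking the Douady space of morphisms and its deformation theory in the analytic category; the paper's fibre-restriction argument is shorter and avoids this machinery, though it must separately justify that the restriction of $p$ to an Albanese fibre is quasi-\'etale --- the same genericity issue you handle explicitly.
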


\begin{proof}
Consider the normalized fibre product diagram
\[ \xymatrix{
Y \ar^{p'}[rr] \ar_-p[d] & & T' \x Z' \ar^-{\gamma'}[d] \\
T \x Z \ar^\gamma[rr] & & X.
} \]
Since $q(T \x Z)$ is already maximal, $q(Y) = q(T \x Z)$ and hence
\[ \alb(p) \from \Alb(Y) \lto \Alb(T \x Z) = T \]
is finite, \emph{i.e.}~an isogeny.
The same argument applies to $T' \x Z'$.
So both $T$ and $T'$ are isogeneous to $\Alb(Y)$.
This implies~\labelcref{utc.1} because isogeny is an equivalence relation.

We turn to~\labelcref{utc.2}.
Consider an arbitrary $s \in \Alb(Y)$.
Set $t \defn \alb(p)(s) \in T$ and $Z_t \defn \set t \x Z \subset T \x Z$.
We claim that we can take $Z'' \defn \alb_Y\inv(s)$.
By \cref{alb splits}, $Z''$ is connected and has canonical singularities.
It is clear that $p(Z'') \subset Z_t$.
Then they are equal, because they have the same dimension and $Z_t$ is irreducible.
Hence the restriction of $p$ is a \qe cover $Z'' \to Z_t \isom Z$.
The same argument also shows that $p'$ restricts to a \qe cover $Z'' \to Z'$.
This proves the claim.
\end{proof}

The Abundance conjecture for compact \kahler spaces with canonical (or klt) singularities and vanishing first Chern class has been established recently (\emph{cf.} \emph{e.g.} \cite[Corollary~1.18]{JHM2} and references therein) building upon delicate results on jump loci for cohomology. Below, we explain how to recover the result based solely on the Albanese splitting, \emph{i.e.} \cref{alb splits}. 

\begin{cor}[Special case of Abundance] \label{abundance c1=0}
Let $X$ be a normal compact \kahler space with \emph{canonical} singularities and $\cc1X = 0$.
Then $\can X$ is torsion, \emph{i.e.}~$\can X^{[m]} \isom \O X$ for some $m > 0$.
\end{cor}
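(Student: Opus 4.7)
I will proceed by induction on $n = \dim X$, the base case $n = 0$ being trivial, and split the inductive step according to whether the irregularity $q(X)$ is positive. If $q(X) > 0$, I apply \cref{alb splits} to produce a finite \'etale cover $\pi \from \wt X \isom F \x A_1 \to X$, where $A_1$ is \'etale over $\Alb(X)$ and the connected Albanese fibre $F$ has dimension $n - q(X) < n$. Because $\pi$ is \'etale and $K_{A_1}$ is trivial, $K_{\wt X} = p_F^{[*]} K_F$, so $F$ inherits canonical singularities and satisfies $\cc1F = 0$. The induction hypothesis gives $K_F^{[m]} \isom \O F$ for some $m \geq 1$, whence $K_{\wt X}^{[m]} \isom \O{\wt X}$. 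Replacing $\pi$ by its Galois closure via \cref{lem:galois closure} yields a still-\'etale cover $\wh X \to X$ with Galois group $G$ on which $K_{\wh X}^{[m]}$ is trivial; the $G$-equivariant structure inherited from $K_X^{[m]}$ is then a character $G \to \C^*$ of order dividing $|G|$, so $K_X^{[m|G|]} \isom \O X$ after taking $G$-invariants.

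The main obstacle is the remaining case $q(X) = 0$, where \cref{alb splits} is vacuous. My plan is to transfer the question to a resolution $\pi \from Y \to X$: since canonical singularities are rational, $q(Y) = q(X) = 0$ and therefore $\Picn(Y) = 0$, so every numerically trivial line bundle on $Y$ is torsion (of order dividing the cardinality of $\mathrm{tors}\,\HH2.Y.\Z.$). Since $X$ is canonical, $K_Y = \pi^* K_X + E$ with $E \geq 0$ effective and $\pi$-exceptional; choose $m_0$ with $m_0 K_X$ Cartier and $m_0 E$ integral. The Cartier line bundle $L \defn \pi^*(m_0 K_X) = m_0 K_Y - m_0 E$ is numerically trivial by $\cc1X = 0$, hence torsion, say $L^{\otimes k} \isom \O Y$.

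From here two essentially equivalent finishes are available. Most directly, the injectivity of $\pi^* \from \Pic(X) \to \Pic(Y)$, a consequence of the projection formula and $\pi_* \O Y = \O X$, immediately forces $k m_0 K_X \sim 0$ on $X$. More in the spirit of the paper's stated use of the Bochner principle, the relation $L^{\otimes k} \isom \O Y$ gives $k m_0 K_Y \sim k m_0 E \geq 0$, hence a nonzero section of $\omega_Y^{\otimes k m_0}$; by $\pi_* \omega_Y^{\otimes \ell} = \omega_X^{[\ell]}$ (valid for canonical singularities), this descends to a nonzero section of $\omega_X^{[k m_0]}$. \cref{bochner} then asserts that this section is parallel for the Ricci-flat metric, and because Ricci-flatness implies that the Chern curvature of $K_{\Reg X}$ vanishes, a nonzero parallel section has constant positive norm and so vanishes nowhere, yielding $\omega_X^{[k m_0]} \isom \O X$ by reflexivity.
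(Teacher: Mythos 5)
Your proof is correct, and it rests on the same two pillars as the paper's argument --- \cref{alb splits} to dispose of the positive-irregularity directions, and the exponential sequence in the presence of $q=0$ to convert numerical triviality of the canonical sheaf into torsion --- but you organize them differently. The paper does not induct: it first deduces from \cref{alb splits} that $\wt q(X) \le \dim X$ is finite, passes to a quasi-\'etale cover realizing $\wt q$, splits off the torus in one step to reach $X_2 = T \times Z$ with $q(Z) = 0$, and runs the exponential-sequence argument directly on the \emph{singular} space $Z$ (using $\hh1.Z.\O Z. = q(Z) = 0$, valid by rationality of the singularities) before descending to $X$. You instead induct on $\dim X$, using the Albanese fibre $F$ to drop the dimension whenever $q(X) > 0$, and in the terminal case $q(X) = 0$ you apply the exponential sequence on a resolution $Y$ and descend via the injectivity of $\pi^* \from \Pic(X) \to \Pic(Y)$. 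Both routes are sound; yours has the mild advantage of never needing the finiteness of the augmented irregularity (each inductive step is a single application of \cref{alb splits}, at the cost of one Galois-closure descent per step), while the paper's one-shot reduction to $T \times Z$ keeps all the covering-space bookkeeping in \cref{torus cover}-style statements. Your alternative Bochner-principle finish for the $q=0$ case also works, but it is strictly unnecessary given the $\Pic$-injectivity argument, which is the cleaner of the two.
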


\begin{proof}
It follows from \cref{alb splits} that $\wt q(X) \le \dim X$ is finite (\emph{cf.}~the proof of \cref{torus cover}).
Choose a \qe cover $X_1 \to X$ such that $q(X_1) = \wt q(X)$, and apply \cref{alb splits} to $X_1$.
This yields an \'etale cover $X_2 \to X_1$ that splits as $X_2 = T \x Z$, where $T$ is a complex torus of dimension $q(X_1)$.
As $q(T) + q(Z) = q(X_2) = q(X_1) = q(T)$, we see that $q(Z) = 0$.

Now note that $\hh1.Z.\O Z. \le q(Z)$ by the Leray spectral sequence.
(In fact, equality holds because $Z$ has rational singularities.)
Thus $q(Z) = 0$ implies, via the exponential sequence, that any line bundle on $Z$ with vanishing integral first Chern class is trivial.
Consequently, for any \Q-Cartier reflexive rank one sheaf $\sA$ on $Z$ with vanishing \emph{real} first Chern class, there is $m > 0$ such that $\sA^{[m]} \isom \O Z$.
We may apply this observation to $\sA = \can Z$ and combine it with the fact that $\can T$ is trivial.
The conclusion is that $\can{X_2} = \can{T \x Z}$ is torsion as well.
Then the same is true of $\can X$.
\end{proof}

\begin{cor}[Characterization of torus quotients via $\wt q$] \label{tq irreg}
Let $X$ be a normal compact \kahler space with klt singularities such that $\cc1X = 0$.
Then the following conditions are equivalent:
\begin{enumerate}
\item\label{tqi.1} The augmented irregularity attains its maximum possible value, namely $\wt q(X) = \dim X$.
\item\label{tqi.2} There exists a complex torus $T$ and a holomorphic action of a finite group $G \acts T$, free in codimension one, such that $X \isom \factor TG$.
\end{enumerate}
\end{cor}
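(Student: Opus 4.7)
The plan is to handle the two implications separately.

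Direction \labelcref{tqi.2} $\Rightarrow$ \labelcref{tqi.1} is essentially immediate: if $X \cong T/G$ with $G$ acting freely in codimension one, then the quotient map $T \to X$ is a quasi-\'etale cover, so
\[ \widetilde q(X) \ge q(T) = \dim T = \dim X, \]
and the matching upper bound $\widetilde q(X) \le \dim X$ has already been established in \cref{torus cover}.

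For the substantive direction \labelcref{tqi.1} $\Rightarrow$ \labelcref{tqi.2}, the strategy has two steps. First, I would apply \cref{torus cover} to obtain a quasi-\'etale cover $\gamma \from T \x Z \to X$ in which $T$ is a complex torus of dimension $\widetilde q(X) = \dim X$. A comparison of dimensions forces $\dim Z = 0$, and since $Z$ is connected by the global conventions of the paper, $Z$ reduces to a single point. Thus $\gamma$ reduces to a quasi-\'etale cover $T \to X$ from a complex torus. Second, I would pass to a Galois closure via \cref{lem:galois closure}, obtaining a factorization $\widetilde T \xrightarrow{g} T \xrightarrow{\gamma} X$ with $f \defn \gamma \circ g$ Galois and still quasi-\'etale; in particular $g$ itself is quasi-\'etale. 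Since $T$ is smooth, purity of the branch locus upgrades $g$ to an \'etale cover, so $\widetilde T$ is smooth, and being a finite \'etale cover of a complex torus it is itself a complex torus (the universal cover remains $\C^n$). Setting $G \defn \Gal(\widetilde T / X)$, quasi-\'etaleness of $f$ translates into $G$ acting freely in codimension one on $\widetilde T$, and $X \cong \widetilde T / G$ is the presentation sought.

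I do not anticipate any genuine difficulty here: the heavy lifting is entirely concentrated in \cref{intro:tc}, which itself rests on the Bochner principle and the Albanese splitting, while the remainder of the argument is a formal combination of \cref{lem:galois closure} and classical purity of the branch locus. The only two points that merit a line of justification are the reduction ``$\dim Z = 0 \Rightarrow Z$ is a point'', which relies on the connectedness convention of \cref{sec:basic material}, and the remark that a finite \'etale cover of a complex torus is again a complex torus, which follows immediately from the fact that $\C^n$ remains the universal cover.
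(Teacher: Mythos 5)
Your proposal is correct and follows essentially the same route as the paper: dimension count in the torus cover of \cref{torus cover} forces $Z$ to be a point, then Galois closure via \cref{lem:galois closure} plus purity of the branch locus yields the torus quotient presentation, while the converse is the obvious inequality $\wt q(X) \ge q(T) = \dim T$. The extra details you supply (connectedness of $Z$, that a finite \'etale cover of a torus is a torus) are exactly the points the paper leaves implicit.
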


\begin{proof}
``\labelcref{tqi.1} $\imp$ \labelcref{tqi.2}'': Consider a torus cover $\gamma \from T \x Z \to X$.
The dimension of $Z$ equals $\dim X - \dim T = \dim X - \wt q(X) = 0$, so $Z$ is a point.
This means that $\gamma$ is a finite \qe cover of $X$ by the complex torus $T$.
The claim follows by taking Galois closure, \emph{cf.}~\cref{lem:galois closure} and~\cite[Lemma~7.4]{TorusQuotients}.

``\labelcref{tqi.2} $\imp$ \labelcref{tqi.1}'': Obvious, as $\wt q(X) \ge q(T) = \dim T = \dim X$.
\end{proof}

\section{Spaces with flat tangent sheaf on the smooth locus} \label{sec TX flat}

The aim of this section is to prove the following result, which comes in a local as well as a global version.
This is an important step towards \cref{intro:torusquotient}.

\begin{thm}[Compact spaces with flat tangent sheaf] \label{global flat tangent}
Let $X$ be a normal compact complex space with klt singularities.
Assume that the tangent sheaf of the smooth locus $\T{\Reg X}$ is flat in the sense of \cref{flat sheaf}.
Then $X$ admits a \qe Galois cover $\wt X \to X$ with $\wt X$ smooth.
\end{thm}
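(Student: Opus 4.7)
My plan is to reduce the problem, via the Lipman--Zariski conjecture for klt spaces \cite{GK13,DruelZL}, to showing that the monodromy representation $\rho \from \pi_1(\Reg X) \to \GL{n}{\C}$ attached to the flat tangent sheaf (with $n = \dim X$) has finite image. Once this is established, the finite \'etale cover of $\Reg X$ corresponding to $\ker\rho$ extends by \cite[Theorem~3.4]{DG94} to a finite \qe cover $\wt X \to X$ on which the pullback of $\T{\Reg X}$ is trivial. Reflexive extension across the codimension~$\ge 2$ singular locus of $\wt X$ then yields $\T{\wt X} \isom \O{\wt X}^{\oplus n}$, which is in particular locally free, so the Lipman--Zariski conjecture forces $\wt X$ to be smooth. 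Taking a Galois closure via \cref{lem:galois closure} finally provides the desired \qe Galois cover.

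As a preliminary reduction, I would pass to the index-one cover of $X$. Flatness of $\T{\Reg X}$ implies vanishing Chern classes on $\Reg X$, in particular $c_1(X) = 0$, so by \cref{abundance c1=0} the canonical class $K_X$ is torsion. The index-one cover is a finite \qe cover that preserves flatness of the tangent sheaf on the smooth locus, and any smooth \qe Galois cover of it descends, after composition and Galois closure, to such a cover of $X$. Thus from this point on I may assume $K_X \isom \O X$ with canonical singularities.

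The core difficulty is then finiteness of $\img\rho$, which I would establish inductively along the canonical Whitney stratification (\cref{rem:canonical stratification}) by combining the generic version of the \gkp cover theorem (\cref{gkp max strata}) with the local algebraicity of isolated klt singularities (\cref{example isolated singularities}). First, \cref{gkp max strata} provides a \qe cover $X' \to X$ killing the monodromy of $\rho$ around the maximal-dimensional strata of $\Sing{X'}$, so the flat tangent bundle extends locally freely across these generic singular points. Cutting by a transverse slice at such a point (\cref{thm:neighborhood stratum}) yields an isolated klt germ with flat tangent sheaf on its smooth locus, which by local algebraicity falls under the projective statement already handled in \cite{GGK}, implying smoothness of the slice. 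The local topological triviality of tubular neighborhoods (\cref{thm:tubular neighborhood}) then propagates smoothness along the entire stratum. Iterating on progressively deeper strata eventually exhausts $\Sing X$. The main obstacle I foresee is the coherent propagation of \qe covers across successive strata: each iteration requires verifying that the flat structure restricts correctly to transverse slices and that \cref{gkp max strata} can be reapplied on the newly constructed space, which should be managed via careful bookkeeping of fundamental groups of iterated punctured neighborhoods together with \cref{cor:branching stratum}.
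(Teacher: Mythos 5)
Your central reduction --- that it suffices to show the monodromy representation $\rho \from \pi_1(\Reg X) \to \GL n\C$ of the flat tangent sheaf has finite image --- is a genuine gap: you are trying to prove something that is false in the stated generality and that the theorem does not need. The theorem concerns arbitrary normal compact complex spaces, not \kahler ones, and for non-\kahler spaces the monodromy of a flat tangent sheaf can be infinite: a Hopf manifold $(\C^n\setminus\{0\})/\langle z\mapsto 2z\rangle$ is smooth (so the theorem holds for it trivially), its tangent bundle is flat with monodromy generated by $2\cdot\mathrm{Id}$, and no flat structure on it has finite monodromy (otherwise a finite \'etale cover would be holomorphically parallelisable, hence a quotient of a complex Lie group, which Hopf manifolds are not). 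For the same reason your preliminary reduction via \cref{abundance c1=0} is unavailable: that corollary assumes $X$ compact \kahler with canonical singularities, neither of which is part of the hypothesis here. The point you are missing is that one never needs the flat bundle to become \emph{trivial} on a cover --- only to extend to a \emph{locally free} flat sheaf across the singularities of a suitable cover, after which reflexivity identifies the extension with the tangent sheaf and the Lipman--Zariski theorem gives smoothness. This is exactly what a \gkp cover provides via Malcev's theorem (\cref{rem:extension}): every linear representation of $\pi_1$ of the smooth locus factors through $\pi_1$ of the whole space, so the flat bundle extends with no finiteness assumption whatsoever; that is the content of \cref{gkp smooth}.

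The remainder of your sketch contains the right ingredients but assembles them towards the wrong goal. The correct argument is an induction on $\dim \Sing X$: apply \cref{gkp max strata} to obtain $X^\circ\supset\Reg X$ with $\dim(X\setminus X^\circ)<\dim\Sing X$ admitting a \gkp cover; that cover is smooth by \cref{gkp smooth}; extend it to a \qe cover of all of $X$, whose singular locus then has strictly smaller dimension; conclude by the induction hypothesis and a Galois closure (\cref{lem:galois closure}). Your detour through transverse slices and the projective results of \cite{GGK} does not repair the gap: those are global statements about projective varieties with vanishing first Chern class, not statements about isolated algebraic klt germs, and even granting smoothness of the slices this would not yield finiteness of the global monodromy of $\rho$, which, as the Hopf example shows, is simply not a consequence of the hypotheses.
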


\begin{thm}[Germs with flat tangent sheaf] \label{local flat tangent}
Let $(X, x)$ be a germ of a klt singularity such that $\T{\Reg X}$ is flat.
Then $(X, x)$ is a quotient singularity.
\end{thm}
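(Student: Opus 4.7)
The plan is to produce a finite quasi-\'etale cover $\wt X \to X$ of a small Stein neighborhood of $x$ that is smooth, after which \cref{lem:quotient sing} yields the conclusion. By the Lipman--Zariski conjecture for klt singularities~\cite{GK13, DruelZL}, to obtain smoothness of $\wt X$ it suffices to ensure that $\T{\wt X}$ is locally free---and for that, it suffices that the flat bundle $\T{\Reg{\wt X}}$ is holomorphically trivial on $\Reg{\wt X}$, since then by reflexivity the trivialization propagates across the singularities to an isomorphism $\T{\wt X} \isom \O{\wt X}^{\oplus n}$, where $n = \dim X$.

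Writing the flat structure as a representation $\rho\colon \pi_1(\Reg X) \to \GL{n}{\C}$, the problem thus reduces to trivializing $\rho$ after pulling back along a finite \'etale cover of $\Reg X$ (which extends to a quasi-\'etale cover of $X$ via~\cite[Theorem~3.4]{DG94}). I would handle the \emph{isolated} case first. When $(X, x)$ is an isolated singularity, \cref{example isolated singularities} shows the germ is algebraic, opening the door to Greb--Kebekus--Peternell theory: combining the existence of a maximally quasi-\'etale cover with finiteness of the local algebraic \'etale fundamental group of klt singularities (due to Xu and Braun), together with the fact that the flat bundle in question is \emph{the tangent sheaf itself}---which imposes strong reflexive constraints across $x$ and is subject to Bochner-type rigidity in the spirit of \cref{bochner}---should force $\rho$ to have finite image, after which one further finite \'etale cover trivializes it.

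For the \emph{non-isolated} case, I would induct on $d \defn \dim_x \Sing X$, the case $d = 0$ being handled above. Let $S$ be the stratum containing $x$ in a Whitney stratification (\cref{thm:neighborhood stratum}) of $X$, and let $N$ be a general transverse slice to $S$ at $x$. Then $N$ is a germ of a klt singularity at $x$ (by inversion of adjunction), its singular locus has strictly smaller dimension, and $\T{\Reg N}$ inherits flatness from $\T{\Reg X}$. By the inductive hypothesis, $N$ is a quotient singularity. The local product structure $X \sim S \x N$ afforded by \cref{thm:tubular neighborhood}, combined with the dichotomy of \cref{cor:branching stratum}, then promotes the smooth quasi-\'etale cover of $N$ to a smooth quasi-\'etale cover of a neighborhood of $x$ in $X$.

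The main obstacle is the step within the isolated case asserting that $\rho$ has \emph{finite} image: mere finiteness of $\pi_1^{\text{\'et}}(\Reg X)$ is insufficient \emph{a priori}, since a continuous representation to $\GL{n}{\C}$ may have trivial profinite completion yet infinite image. The crucial point will therefore be to exploit that $\T{\Reg X}$ is not an abstract flat bundle but the tangent sheaf of a klt germ---extracting extra structure from its reflexive extension across $x$ that forces the monodromy to have finite image.
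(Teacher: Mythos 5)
Your overall shape (reduce to a smooth quasi-\'etale cover via Lipman--Zariski and \cref{lem:quotient sing}, treat isolated singularities via algebraicity and Xu's theorem, induct on the dimension of the singular locus) matches the paper's strategy, but there are two genuine gaps. First, in the isolated case you correctly flag that finiteness of $\piet{\Reg X}$ does not by itself control a linear representation $\rho$ of the topological $\pi_1(\Reg X)$, but the fix you propose --- extracting ``Bochner-type rigidity'' from the fact that the flat bundle is the tangent sheaf --- is not the right mechanism and is not an argument. The standard resolution is purely group-theoretic: by Malcev's theorem the finitely generated linear group $\img\rho$ is residually finite, so on a \gkp cover \emph{every} linear representation of $\pi_1(\Reg{\wt X})$ factors through $\pi_1(\wt X)$ (\cref{rem:extension}); since a germ is contractible this kills $\rho$ outright, and one does not even need finiteness of the image --- it suffices that the flat bundle extends as a flat bundle across the singularities, whence $\T{\wt X}$ is locally free by reflexivity and Lipman--Zariski applies (\cref{gkp smooth}). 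The special nature of the tangent sheaf enters only at that last step, not in trivializing the monodromy.

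Second, and more seriously, your inductive step for non-isolated singularities does not make progress in the central case. If $x$ lies in a zero-dimensional stratum $S=\{x\}$ (the typical situation for the ``worst'' point of $\Sing X$, e.g.\ the vertex of a cone over a singular base), the transverse slice $N$ is simply a neighborhood of $x$ in $X$, so $\dim\Sing N=\dim_x\Sing X$ and the induction does not terminate; more generally $\dim\Sing N=\dim_x\Sing X-\dim S$, which is only smaller when $\dim S>0$. Moreover, the product structure of \cref{thm:tubular neighborhood} is merely a homeomorphism, so even where slicing works you cannot holomorphically promote a quasi-\'etale cover of $N$ to one of $X$. The paper circumvents both problems by never building covers from slices: transverse slices are used only at \emph{maximal} strata, where they have isolated (hence algebraic) klt singularities, to deduce via Xu that the links have finite \'etale fundamental group (\cref{lem:finite pi_1 max stratum}); a stabilization argument over these finitely many finite groups then produces a quasi-\'etale cover of $X$ itself that is maximally quasi-\'etale --- hence smooth --- away from a subset of $\Sing X$ of strictly smaller dimension (\cref{gkp max strata}), and the induction runs on $\dim\Sing{}$ of the successive \emph{covers}, not of slices.
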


The notion of \emph{\gkp covers}, defined next, is central to the proof.

\begin{dfn} \label{dfn gkp}
Let $X$ be a normal complex space.
A \emph{\gkp cover} of $X$ is a quasi-\'etale Galois cover $\gamma \from \wt X \to X$ satisfying the following equivalent conditions:
\begin{enumerate}
\item\label{gkp.1} Any \'etale cover of $\Reg{\wt X}$ extends to an \'etale cover of $\wt X$.
\item\label{gkp.2} Any quasi-\'etale cover of $\wt X$ is \'etale.
\item\label{gkp.3} The natural map of \'etale fundamental groups $\piet{\Reg{\wt X}} \to \piet{\wt X}$ induced by the inclusion $\Reg{\wt X} \inj \wt X$ is an isomorphism.
\end{enumerate}
\end{dfn}

\noindent
The equivalence of conditions~\labelcref{gkp.1}--\labelcref{gkp.3} is discussed in~\cite[Theorem~1.5]{GKP16} and its proof. 

\begin{rem} \label{rem:extension}
If $\wt X \to X$ is a \gkp cover, then it follows from Malcev's theorem that any linear representation of $\pi_1(\wt X_{\rm reg})$ factors through $\pi_1(\wt X)$, or equivalently, any flat bundle over $\wt X_{\rm reg}$ extends to a flat bundle on $\wt X$, \emph{cf.}~\cite[\textsection~8.1]{GKP16}.
\end{rem}

In general, a normal space $X$ will not admit a \gkp cover (for an easy example, take a cone over an elliptic curve).
It was shown in~\cite[Theorem~1.1]{GKP16} that such a cover exists if $X$ is an \emph{algebraic} variety with klt singularities.
Also, it is relatively easy to see that existence of the cover would imply \cref{global flat tangent}, \emph{cf.}~\cref{gkp smooth} below.
Our \cref{gkp max strata} is a weaker version of~\cite{GKP16} for complex spaces: we construct a \gkp cover ``generically'', \emph{i.e.}~only over general points of the singular locus $\Sing X$.
This, however, is sufficient to prove the main result.

\subsection{Technical preparations}

As mentioned above, \cref{global flat tangent} is easy to prove if a \gkp cover is already known to exist.

\begin{prp} \label{gkp smooth}
Let $X$ be a normal complex space with log canonical singularities, and assume that the tangent sheaf of the smooth locus $\T{\Reg X}$ is flat.
Then any \gkp cover of $X$ is smooth (if it exists).
\end{prp}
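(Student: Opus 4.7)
The plan is to deduce that the tangent sheaf $\T{\widetilde X}$ of the \gkp cover $\gamma\colon\widetilde X\to X$ is locally free, and then to invoke the Lipman--Zariski conjecture for log canonical singularities---proved independently by Graf--Kov\'acs~\cite{GK13} and Druel~\cite{DruelZL}---to conclude that $\widetilde X$ is smooth. Since $\gamma$ is quasi-\'etale, $\widetilde X$ inherits log canonicity from $X$, and the reflexive pullback $\gamma^{[*]}\T X$ coincides with $\T{\widetilde X}$. Let $U\subset \widetilde X$ be the open subset where $\gamma$ is \'etale \emph{and} $\widetilde X$ is smooth; its complement has codimension at least two (branching happens in codimension $\ge 2$ since $\gamma$ is quasi-\'etale, and $\Sing\widetilde X$ has codimension $\ge 2$ by normality). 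On $U$, $\T{\widetilde X}|_U$ is the \'etale pullback of the flat bundle $\T{\Reg X}$, hence is itself flat.

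Next I would propagate flatness from $U$ to all of $\Reg\widetilde X$. Because $\Reg\widetilde X$ is a complex manifold and $\Reg\widetilde X\setminus U$ is a closed analytic subset of complex codimension $\ge 2$, the inclusion $U\hookrightarrow \Reg\widetilde X$ induces an isomorphism on fundamental groups. Consequently, the linear representation classifying $\T{\widetilde X}|_U$ descends to a representation $\rho\colon \pi_1(\Reg\widetilde X)\to \GL(n,\C)$, endowing $\T{\Reg\widetilde X}$ with a flat structure.

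At this point I invoke the defining property of a \gkp cover (see \cref{dfn gkp}) together with Malcev's theorem as recorded in \cref{rem:extension}: every linear representation of $\pi_1(\Reg\widetilde X)$ factors through $\pi_1(\widetilde X)$. Thus $\rho$ extends, and the flat bundle $\T{\Reg\widetilde X}$ extends to a flat---in particular locally free---vector bundle $\sF$ on all of $\widetilde X$. Since $\sF$ and $\T{\widetilde X}$ are reflexive sheaves agreeing on $\Reg\widetilde X$, they are isomorphic, so $\T{\widetilde X}$ is locally free. The log canonical Lipman--Zariski theorem then forces $\widetilde X$ to be smooth.

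I expect the only delicate point to be the codimension-two extension that promotes flatness from the combined \'etale-and-smooth locus $U$ to all of $\Reg \widetilde X$; the remaining ingredients---compatibility of reflexive pullback with $\T{\,\cdot\,}$, the \gkp extension principle, and Lipman--Zariski---either follow directly from the definitions or are invoked as external inputs.
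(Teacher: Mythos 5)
Your proposal is correct and follows essentially the same route as the paper's proof: pull back the flat structure over the \'etale locus, extend it across the codimension-two complement to $\Reg{\wt X}$, use the maximally quasi-\'etale property together with Malcev's theorem (\cref{rem:extension}) to extend to a flat bundle on all of $\wt X$, identify it with $\T{\wt X}$ by reflexivity, and conclude via the log canonical Lipman--Zariski theorem. The codimension-two $\pi_1$-extension step you flag as delicate is exactly the (tersely stated) middle step of the paper's argument and is standard for complex manifolds.
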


\begin{proof}
Let $\gamma \from \wt X \to X$ be a \gkp cover.
The pullback $\gamma^* \T{\Reg X}$ is a flat sheaf on $\gamma\inv(\Reg X)$, which satisfies $\gamma\inv(\Reg X) \subset \Reg{\wt X}$ and the complement has codimension at least two.
Hence $\gamma^* \T{\Reg X}$ extends to a flat sheaf on $\Reg{\wt X}$
and we can extend this sheaf further, to a flat sheaf $\sF$ on all of $\wt X$, \emph{cf.} \cref{rem:extension}.
By reflexivity, $\sF \isom \T{\wt X}$ and in particular $\T{\wt X}$ is locally free.
The solution of the Lipman--Zariski conjecture for log canonical spaces (\cite[Corollary~1.3]{GK13}, \cite[Theorem~1.1]{DruelZL}) then shows that $\wt X$ is smooth.
\end{proof}

The following lemma describes the \'etale fundamental group of the link of maximal dimensional strata of a klt space. 

\begin{lem} \label{lem:finite pi_1 max stratum}
Let $X$ be a klt complex space and let us choose a Whitney stratification $\sS$ of $X$.
The maximum dimensional strata are denoted by $X_\lambda$, $\lambda \in \Lambda_\maxi$, as in \cref{def:max strata}.
For any $\lambda \in \Lambda_\maxi$, the group $\piet{\Link X.X_\lambda.}$ is finite.
\end{lem}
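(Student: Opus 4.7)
The idea is to pass to a transverse slice at a general point of $X_\lambda$, recognise it as an isolated klt singularity, and invoke the known finiteness of local étale fundamental groups of algebraic klt singularities.

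First I would fix $\lambda\in\Lambda_\maxi$ and a general point $x_0\in X_\lambda$, assuming that $X_\lambda\subset \Sing X$ (otherwise the link is a euclidean sphere and the statement is trivial). Setting $d = \dim X_\lambda$ and embedding a neighborhood of $x_0$ into a complex manifold $M$, I pick a smooth complex submanifold $N\subset M$ through $x_0$ of codimension $d$, transverse at $x_0$ to every stratum of $\sS$, and set $Y\defn X\cap N$ as a germ at $x_0$. Transversality together with the maximality of $X_\lambda$ inside $\Sing X$ force $x_0$ to be the unique singular point of $Y$: lower-dimensional strata of $\Sing X$ touch $N$ near $x_0$ only at $x_0$, and strata contained in $\Reg X$ contribute only smooth points to $Y$. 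By \cref{thm:neighborhood stratum} one obtains a homotopy equivalence $Y\setminus\{x_0\}\simeq \Link X.X_\lambda.$, so it is enough to prove that $\piet{Y\setminus\{x_0\}}$ is finite.

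Next, I would argue that $(Y,x_0)$ is an isolated \emph{klt} singularity. Since a generic transverse slice may be realised as a complete intersection of $d$ general smooth hypersurfaces of $M$ through $x_0$, an iterated Bertini argument for klt pairs, in its analytic form, gives the klt property of $(Y,x_0)$. Because $(Y,x_0)$ is isolated, by \cref{example isolated singularities} (\emph{i.e.}~\cite[Theorem~3.8]{Artin69}) the germ is analytically isomorphic to the germ $(Y^{\mathrm{alg}},x_0^{\mathrm{alg}})$ of a quasi-projective klt variety. Xu's theorem~\cite{Xu14} on finiteness of the local étale fundamental group of an algebraic klt singularity, applied to $(Y^{\mathrm{alg}},x_0^{\mathrm{alg}})$, then yields the finiteness of $\piet{Y\setminus\{x_0\}}$, and hence of $\piet{\Link X.X_\lambda.}$.

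The main obstacle is in the middle step: arranging that the ``topologically transverse'' slice $N$ needed to apply \cref{thm:neighborhood stratum} can simultaneously be chosen ``analytically generic enough'' for Bertini for klt to apply. The natural workaround is that both the locus of $N$'s transverse to the finitely many strata whose closure contains $x_0$, and the locus of $N$'s for which Bertini delivers a klt slice, are nonempty open subsets of the Grassmannian of $d$-codimensional subspaces of $T_{x_0}M$, so their intersection is nonempty.
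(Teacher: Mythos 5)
Your proposal is correct and follows essentially the same route as the paper: take a normal slice $N$ at a point of $X_\lambda$, identify the punctured slice with $\Link X.X_\lambda.$ via \cref{thm:neighborhood_stratum} (\emph{i.e.}~\cref{thm:neighborhood stratum}), observe that the slice is an isolated klt singularity, hence algebraic by \labelcref{example isolated singularities}, and conclude with Xu's finiteness theorem. The only cosmetic difference is that where you sketch an iterated analytic Bertini argument (and worry about reconciling topological transversality with analytic genericity), the paper simply cites \cite[Lemma~5.12]{KM98} for the klt property of the general slice.
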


\begin{proof}
Pick a normal slice $N$ of $X_\lambda$ cutting the stratum in the unique point $p$.
Set $N_X \defn N \cap X$ and $N^* \defn N_X \setminus \set p$.
From \cref{thm:neighborhood stratum} we know that $N^*$ has the homo\-topy type of $\Link X.X_\lambda.$ and in particular $\piet{\Link X.X_\lambda.} = \piet{N^*}$.
But now according to~\cite[Lemma~5.12]{KM98}, $(N_X, p)$ is an isolated klt singularity, hence algebraic \labelcref{example isolated singularities}.
We can now appeal to~\cite[Theorem~1]{Xu14} and conclude that the algebraic local fundamental group $\piet{N_X \setminus \set p}$ is finite.
\end{proof}

\begin{rem}\label{rem:link max strata}
The lemma above should apply to any stratum, not only to the maximal dimensional ones.
Unfortunately, Xu's result is only available in the algebraic setting, see also \cref{rem:existence GKP cover}.
This is why we stick to maximal dimensional strata: they are the ones whose slices have only isolated singularities.
\end{rem}

We now observe that quasi-\'etale covers are well understood over the maximal dimensional strata.

\begin{lem} \label{lem:Galois cover group link}
Let $\gamma \from Y \to X$ be a \qe Galois cover between klt complex spaces, $X$ being endowed with a Whitney stratification $\sS$.
Fix an index $\lambda \in \Lambda_\maxi$ with respect to $\sS$.
Then:
\begin{enumerate}
\item\label{gcgl.1} The cover $\gamma$ naturally induces a subgroup $G_\lambda(\gamma) \subset \piet{\mathrm{Link}(X, X_\lambda)}$, well-defined up to conjugation.
\item\label{gcgl.2} If $g \from Z \to Y$ is any further \qe cover such that the composition $\gamma \circ g \from Z \to X$ is also Galois, then $G_\lambda(\gamma \circ g) \subset G_\lambda(\gamma)$.
\end{enumerate}
Furthermore, there is a dense Zariski open subset $U_\lambda \subset X_\lambda$, which depends on $\gamma$ but not on $g$, such that $X_\lambda \setminus U_\lambda$ is analytic and the following holds:
\begin{enumerate}
\item\label{gcgl.3} We have equality $G_\lambda(\gamma \circ g) = G_\lambda(\gamma)$ if and only if $g$ is \'etale over $\gamma\inv(U_\lambda)$.
\end{enumerate}
\end{lem}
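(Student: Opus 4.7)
The strategy is to apply the Thom--Mather theory of stratified maps (\cref{strat map} and \cref{thm:tubular neighborhood}) to put $\gamma$ into topological fibre bundle form over a dense Zariski open $U_\lambda \subset X_\lambda$, and then to read the three assertions off from this trivialization.

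\emph{Construction of $U_\lambda$ and $G_\lambda(\gamma)$.} I would first apply \cref{strat map} to $\gamma$, obtaining refinements of the given stratifications with respect to which $\gamma$ is a stratified map. Since $\lambda \in \Lambda_\maxi$, the stratum $X_\lambda$ is a union of refined strata; I let $U_\lambda\subset X_\lambda$ be the unique open dense refined stratum, so that $X_\lambda\setminus U_\lambda$ is analytic and depends only on $\gamma$. Mather's theorem (\cref{thm:tubular neighborhood}) then produces a tubular neighborhood $T_\lambda$ of $U_\lambda$ in $X$, together with a retraction $T_\lambda\to U_\lambda$ whose fibres are the cones on $L\defn \mathrm{Link}(X,X_\lambda)$. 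Since $\gamma$ is stratified, $\gamma\inv(T_\lambda)$ inherits a parallel bundle structure, and $\gamma$ restricts to a topologically locally trivial stratified covering $\gamma\inv(T_\lambda\setminus X_\lambda)\to T_\lambda\setminus X_\lambda$. For any $p\in U_\lambda$ with transverse slice $N_p$ and $N_p^*\defn(N_p\cap X)\setminus\{p\}$, \cref{thm:neighborhood stratum} together with the dichotomy of \cref{cor:branching stratum} implies that $\gamma\inv(N_p^*)\to N_p^*$ is an \'etale cover of a space homotopy equivalent to $L$. Choosing a connected component $V\subset \gamma\inv(N_p^*)$ and a compatible basepoint, I would set
\[ G_\lambda(\gamma) \defn \mathrm{im}\bigl(\piet{V}\to\piet{N_p^*}\bigr) \subset \piet{L}, \]
well-defined up to conjugacy independently of $p\in U_\lambda$ via the bundle trivialization.

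\emph{Proof of (2).} I would choose $p\in U_\lambda$ generic enough to also lie outside the analytic exceptional set coming from the analogous stratification refinement applied to $\gamma\circ g$. For such $p$, with compatible basepoints and a connected component $W\subset (\gamma\circ g)\inv(N_p^*)$ lying over $V$, functoriality of $\piet{-}$ in the diagram
\[ \piet{W}\lto \piet{V}\lto \piet{N_p^*} \]
immediately yields the inclusion $G_\lambda(\gamma\circ g)\subset G_\lambda(\gamma)$.

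\emph{Proof of (3).} By unwinding the definitions, the equality $G_\lambda(\gamma\circ g)=G_\lambda(\gamma)$ is equivalent to $W\to V$ being an isomorphism, i.e.\ to $g$ being a trivial (split) cover over $\gamma\inv(N_p^*)$. By the fibre bundle structure of the first step, this is in turn equivalent to $g$ being a trivial unramified cover of the punctured tubular neighborhood $\gamma\inv(T_\lambda\setminus X_\lambda)$. On the other hand, ``\'etaleness of $g$ over $\gamma\inv(U_\lambda)$'' amounts, by openness of \'etaleness, to $g$ not ramifying along the central locus of that tubular neighborhood. The equivalence of these two conditions is a local statement around the points of $\gamma\inv(U_\lambda)$: if $g$ is \'etale along $\gamma\inv(U_\lambda)$, then locally the cover is a disjoint union of copies and hence has trivial monodromy around $\gamma\inv(p)$, forcing $W\to V$ to split; conversely, a trivial monodromy of $g$ around $\gamma\inv(p)$ allows the deleted-neighborhood cover to be extended across $\gamma\inv(U_\lambda)$ as an \'etale cover, which (by normality of $Y$ and \cite[Theorem~3.4]{DG94}) must coincide with $g$ there. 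The precise formulation of this last step, especially in the branched case $X_\lambda\subset \mathrm{Br}(\gamma)$ where the fibre of $\gamma\inv(T_\lambda)\to U_\lambda$ acquires its own cone structure over the cone point, is the main technical obstacle and is where the Thom--Mather machinery becomes indispensable.
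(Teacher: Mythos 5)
Your construction of $G_\lambda(\gamma)$ and your proof of \labelcref{gcgl.1} and \labelcref{gcgl.2} follow essentially the same route as the paper: refine the stratifications via \cref{strat map} so that $\gamma$ becomes stratified, let $U_\lambda$ be the open dense refined stratum inside $X_\lambda$, and define the subgroup as the image of $\piet{V}\to\piet{N_p^*}$ for a component $V$ of the induced \'etale cover of a punctured normal slice. One small omission there: you should say why the conjugacy class of this subgroup does not depend on the choice of the component $V$ (different components of a non-Galois cover of $N_p^*$ can correspond to non-conjugate subgroups); the paper uses precisely the Galois hypothesis on $\gamma$ at this point.

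The genuine gap is in the forward direction of \labelcref{gcgl.3}. The hypothesis $G_\lambda(\gamma\circ g)=G_\lambda(\gamma)$ only tells you that \emph{one} component $W$ of $(\gamma\circ g)\inv(N_p^*)$ maps isomorphically onto \emph{one} component $V$ of $\gamma\inv(N_p^*)$. You immediately rephrase this as ``$g$ is a trivial (split) cover over $\gamma\inv(N_p^*)$'' and as ``trivial monodromy of $g$ around $\gamma\inv(p)$'', but \'etaleness of $g$ over $\gamma\inv(U_\lambda)$ requires \emph{every} component of $g\inv(V')$ to map isomorphically onto $V'$, for \emph{every} component $V'$. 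Passing from one component to all of them is not formal: $g$ itself need not be Galois, and a deck transformation of the Galois cover $\gamma\circ g$ carrying $W$ to another component over $V$ need not commute with $g$, so it does not directly transport the degree-one property. This is exactly the step the paper outsources to Corollary~3.16 of the preprint version of~\cite{GKP16} (and then upgrades from the refined stratum $Y_\mu'$ to the full stratum $Y_\mu$ via \cref{cor:branching stratum}). Your closing sentence concedes that this step is ``the main technical obstacle'', but Thom--Mather local triviality alone does not supply the missing group-theoretic argument; as written, the equivalence in \labelcref{gcgl.3} is asserted rather than proven.
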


\begin{proof}
By \cref{strat map}, we can choose Whitney stratifications $\sS' < \sS$ and $\sS_Y$ of $X$ and $Y$, respectively, such that $\gamma \from (Y, \sS_Y) \to (X, \sS')$ is a stratified map.
We denote by $X'_\lambda \subset X_\lambda$ the unique stratum of $\sS'$ contained in $X_\lambda$ as an open subset.

The map $\gamma$ being stratified, $\gamma\inv(X'_\lambda)$ is a union of strata of $\sS_Y$ that are of maximum dimension.
Let us fix a normal slice $N$ of $X'_\lambda$ such that $\gamma\inv(N)$ is still a union of normal slices.
We pick a component $T$ of $\gamma\inv(X'_\lambda)$ and denote by $N_T$ the corresponding slice.
The map induced on the punctured slices (cut with $X$ and~$Y$, respectively) $N_T^* \to N^*$ is finite \'etale and thus corresponds to a finite index subgroup
\[ G_\lambda(\gamma) \subset \piet{N^*} = \piet{\mathrm{Link}(X, X'_\lambda)} = \piet{\mathrm{Link}(X, X_\lambda)}. \]
The morphism $\gamma$ being Galois, this subgroup is independent of the choice of the component $T$.
It does however depend on the choice of a basepoint in $N_T^*$, which as usual we have suppressed.
This is why $G_\lambda(\gamma)$ is only well-defined up to conjugation.
This proves the first assertion, and the second one is then clear by construction.

Concerning the third assertion, we claim that we can take $U_\lambda = X'_\lambda$.
Indeed, it is clear that the groups in question are equal if $g$ is \'etale over $\gamma\inv(U_\lambda)$.
For the other direction, pick stratifications $\sS_Z$ of $Z$ and $\sS_Y'$ of $Y$ such that $\sS_Y'$ is a refinement of $\sS_Y$ and $g$ becomes stratified, as above.
Write $\gamma\inv(U_\lambda)$ as a union of $\sS_Y$-strata,
\[ \gamma\inv(U_\lambda) = \bigcup_{\mu \in M} Y_\mu, \]
and let $Y_\mu'$ be the unique $\sS_Y'$-stratum that is open in $Y_\mu$.
It follows from Corollary~3.16 in the preprint version of~\cite{GKP16} that $g$ is \'etale over $Y_\mu'$, for each $\mu \in M$.
But then $g$ is even \'etale over all of $Y_\mu$, by \cref{cor:branching stratum} applied to the original stratification $\sS_Y$.
In other words, $g$ is \'etale over $\gamma\inv(U_\lambda)$, as desired.
\end{proof}

With the previous results at hand, we can prove the existence of \gkp covers, at least after discarding a sufficiently small analytic subset.

\begin{prp} \label{gkp max strata}
Let $X$ be a klt complex space.
Assume that $X$ is either compact or a germ.
Then there exists an analytic subset $Z \subset \Sing X$ with $\dim Z \le \dim \Sing X - 1$ such that $X^\circ \defn X \setminus Z$ admits a \gkp cover.
\end{prp}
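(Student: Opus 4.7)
The plan is to build a Galois \qe cover $\gamma^* \from Y^* \to X$ that is ``maximal'' near every maximal dimensional stratum of $\Sing X$, and then to restrict it to a suitable open subset where the \gkp property can be verified. To begin with, I would fix a Whitney stratification $\sS$ of $X$ such that $\Sing X$ is a union of strata, and denote the strata of maximal dimension $d = \dim \Sing X$ by $\{X_\lambda\}_{\lambda \in \Lambda_\maxi}$. Local finiteness of $\sS$ and compactness of $X$ (or small enough choice of representative in the germ case) guarantees that $\Lambda_\maxi$ is finite. The key finiteness input will be \cref{lem:finite pi_1 max stratum}: each group $\piet{\Link X.X_\lambda.}$ is finite.

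\emph{Simultaneous minimization.} By \cref{lem:Galois cover group link}, every Galois \qe cover $\gamma \from Y \to X$ determines subgroups $G_\lambda(\gamma) \subset \piet{\Link X.X_\lambda.}$ that can only shrink when we pass to a further \qe cover. Given two such covers $\gamma_1, \gamma_2$, I would dominate them by a common Galois \qe cover in the usual way: normalize the fibre product $Y_1 \x_X Y_2$, pick an irreducible component, and apply \cref{lem:galois closure}; the resulting $\gamma$ satisfies $G_\lambda(\gamma) \subset G_\lambda(\gamma_i)$ for $i = 1, 2$. As each ambient group is finite and $\Lambda_\maxi$ is finite, iterating this join procedure produces (after finitely many steps) a Galois \qe cover $\gamma^* \from Y^* \to X$ for which $G_\lambda(\gamma^*)$ is simultaneously minimal for every $\lambda \in \Lambda_\maxi$. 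By part~(3) of \cref{lem:Galois cover group link}, this minimality translates into the following key property: for every further \qe cover $g \from W \to Y^*$ such that $\gamma^* \circ g$ is Galois, $g$ is \'etale over $\gamma^{*,-1}(U_\lambda)$ for each $\lambda$, where $U_\lambda \subset X_\lambda$ is the dense Zariski open subset provided by the lemma.

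\emph{Restriction and \gkp property.} Setting $Z \defn \Sing X \setminus \bigcup_{\lambda} U_\lambda$, which is analytic of dimension $\le d - 1$, $X^\circ \defn X \setminus Z$, and $Y^\circ \defn \gamma^{*,-1}(X^\circ)$, I would then prove that $\gamma^\circ \defn \gamma^*|_{Y^\circ}$ is a \gkp cover. For any \qe cover $g_0 \from W_0 \to Y^\circ$, the bound $\codim_{Y^*}(\gamma^{*,-1}(Z)) \ge 3$ combined with the equivalence between \qe covers of $Y^*$ and \'etale covers of $\Reg{Y^*}$ (\cite[Theorem~3.4]{DG94}) lets me extend $g_0$ to a \qe cover $g \from W \to Y^*$. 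Taking the Galois closure of $\gamma^* \circ g$ via \cref{lem:galois closure} yields a Galois \qe cover $\tilde \gamma = \gamma^* \circ \tilde g \from \wt W \to X$ with $\tilde g \from \wt W \to Y^*$ factoring through $g$; importantly, $\tilde g$ is itself Galois over $Y^*$ since $\gamma^*$ is Galois over $X$. The minimality of $\gamma^*$ now forces $\tilde g$ to be \'etale over each $\gamma^{*,-1}(U_\lambda)$, while purity of branch locus applied over the smooth locus gives \'etaleness of $\tilde g$ over $\gamma^{*,-1}(\Reg X)$; together these two open sets cover $Y^\circ$. Since $g_0 = g|_{Y^\circ}$ is obtained from the Galois \'etale cover $\tilde g|_{Y^\circ}$ by quotienting by a subgroup of the Galois group, standard covering space theory forces $g_0$ itself to be \'etale, which is exactly the \gkp property.

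\emph{Main obstacle.} The entire argument hinges on the finiteness of $\piet{\Link X.X_\lambda.}$ for maximal strata, which rests on Xu's theorem~\cite{Xu14} applied to the isolated klt singularities obtained by transverse slicing. For non-maximal strata this finiteness is not available in the analytic setting (see \cref{rem:link max strata}), and this is precisely the reason we can only establish the \gkp property over $X^\circ$ rather than on all of $X$.
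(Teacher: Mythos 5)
Your proposal is correct and follows essentially the same route as the paper: the same reduction to maximal strata via \cref{lem:finite pi_1 max stratum} and \cref{lem:Galois cover group link}, the same choice of $Z$, and the same verification of condition~\labelcref{gkp.2} by extending a cover of $Y^\circ$, passing to a Galois closure, and combining \'etaleness over the $\gamma^{-1}(U_\lambda)$ with purity over $\gamma^{-1}(\Reg X)$. The only (cosmetic) difference is that you produce the ``minimal'' cover directly by joining covers and descending on the finite groups $G_\lambda$, whereas the paper argues by contradiction with an infinite tower of covers.
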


If $X$ is compact with only \emph{isolated} klt singularities, then we have $\dim Z = -1$ in the above statement, which means $Z = \emptyset$.
Hence $X$ itself admits a \gkp cover.

%
%

\begin{proof}[Proof of \cref{gkp max strata}]
Fix a Whitney stratification $\sS$ of $X$ such that $S_d = \Sing X$.
If $X$ is compact, then $\Lambda$ and in particular $\Lambda_\maxi$ are finite sets.
If $X$ is a germ, we may assume $\Lambda$ to be finite after shrinking $X$, by the local finiteness of $\sS$.
First we show the following.

\begin{clm} \label{1083}
There exists a \qe Galois cover $\gamma \from Y \to X$ such that for every further cover $g \from Z \to X$ with $\gamma \circ g$ Galois and any $\lambda \in \Lambda_\maxi$, we have $G_\lambda(\gamma \circ g) = G_\lambda(\gamma)$.
\end{clm}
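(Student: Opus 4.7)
The plan is to exhibit $\gamma$ as a minimizer of a positive-integer-valued invariant among all \qe Galois covers of $X$. First observe that $\Lambda_\maxi$ is finite (by compactness of $X$ in the first case, or after shrinking $X$ in the germ case so that $\sS$ has only finitely many strata). By \cref{lem:finite pi_1 max stratum}, for each $\lambda \in \Lambda_\maxi$ the group $\piet{\mathrm{Link}(X, X_\lambda)}$ is finite, so $G_\lambda(\gamma)$ can take only finitely many values as $\gamma$ varies.

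Next I would define
\[ n(\gamma) \defn \prod_{\lambda \in \Lambda_\maxi} |G_\lambda(\gamma)|, \]
which is a well-defined positive integer since the order of a finite subgroup is invariant under conjugation. By \cref{lem:Galois cover group link}\,\labelcref{gcgl.2}, whenever $g \from Z \to Y$ is a \qe cover such that $\gamma \circ g$ is again Galois over $X$, one has the containment $G_\lambda(\gamma \circ g) \subset G_\lambda(\gamma)$ for every $\lambda$, and consequently $n(\gamma \circ g) \le n(\gamma)$.

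I would then pick $\gamma \from Y \to X$ minimizing $n$; such a minimizer exists because $n$ takes values in the positive integers, a well-ordered set, and the class of \qe Galois covers of $X$ is non-empty (for instance, $\mathrm{id}_X$ lies in it). For any $g$ as in the statement of the claim, minimality forces $n(\gamma \circ g) = n(\gamma)$, which combined with the containments $G_\lambda(\gamma \circ g) \subset G_\lambda(\gamma)$ of finite subgroups of equal order forces each containment to be an equality.

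The only substantive input is \cref{lem:finite pi_1 max stratum}, which in turn rests on the algebraicity of isolated klt singularities and on Xu's finiteness theorem for local algebraic fundamental groups; everything else is a formal minimization against a monotone integer invariant. This also explains why the argument is confined to maximal strata, as highlighted in \cref{rem:link max strata}: it is precisely there that the transverse slice has isolated singularities and the relevant link group is known to be finite.
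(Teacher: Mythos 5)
Your argument is correct and rests on exactly the same inputs as the paper's proof, namely the finiteness of $\Lambda_\maxi$, the finiteness of $\piet{\mathrm{Link}(X,X_\lambda)}$ from \cref{lem:finite pi_1 max stratum}, and the monotonicity $G_\lambda(\gamma\circ g)\subset G_\lambda(\gamma)$ from \cref{lem:Galois cover group link}. The only difference is presentational: you minimize the integer invariant $\prod_\lambda |G_\lambda(\gamma)|$ directly, whereas the paper runs the equivalent well-foundedness argument by contradiction, producing an infinite tower of covers with strictly decreasing subgroups at some fixed $\lambda_0$.
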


\begin{proof}
Assuming that the claim is false, we can construct an infinite tower of \qe covers
\[ X = Y_0 \xleftarrow{\;\gamma_1\;} Y_1 \xleftarrow{\;\gamma_2\;} Y_2 \xleftarrow{\;\gamma_3\;} \cdots \]
such that for each $i \ge 1$, the map $\eta_i = \gamma_1 \circ \cdots \circ \gamma_i \from Y_i \to X$ is Galois and for some index $\lambda(i) \in \Lambda_\maxi$, the inclusion $G_{\lambda(i)}(\eta_{i+1}) \subset G_{\lambda(i)}(\eta_i)$ is strict.
Because $\Lambda_\maxi$ is finite, some $\lambda_0 \in \Lambda_\maxi$ has to appear as $\lambda(i)$ for infinitely many values of~$i$.
This yields a contradiction to the finiteness of $\piet{\Link X.X_{\lambda_0}.}$ given by \cref{lem:finite pi_1 max stratum}.
\end{proof}

Applying~\labelcref{gcgl.3} to the cover $\gamma$ given by \cref{1083} and to all $\lambda \in \Lambda_\maxi$, we obtain dense open subsets $U_\lambda \subset X_\lambda$ with the following property:
any \qe cover $W \to Y$ such that $W \to X$ is Galois is \'etale over each $\gamma\inv(U_\lambda)$.
If $W \to X$ is not Galois, the conclusion still holds because we may replace $W$ by its Galois closure, \emph{cf.}~\cref{lem:galois closure}.
We now consider $X^\circ = X \setminus Z$, with
\[ Z \defn S_{d-1} \; \cup \bigcup_{\lambda \in \Lambda_\maxi} (X_\lambda \setminus U_\lambda), \]
and claim that $Y^\circ \defn \gamma\inv(X^\circ) \to X^\circ$ is a \gkp cover.
Noting that $Z \subset \Sing X$ has dimension less than $\dim \Sing X$, this will finish the proof.

We aim to verify condition~\labelcref{gkp.2} for $Y^\circ$.
That is, we need to show that any \qe cover $g^\circ \from W^\circ \to Y^\circ$ is in fact \'etale.
Given such a cover, we may extend it to a \qe cover $g \from W \to Y$.
As we have seen above, $g$ is \'etale over each $\gamma\inv(U_\lambda)$.
But we can write $Y$ as a disjoint union
\[ Y = \gamma\inv(Z) \cup \bigcup_{\lambda \in \Lambda_\maxi} \gamma\inv(U_\lambda) \cup \gamma\inv(\Reg X) \]
and hence also $Y^\circ = \bigcup_\lambda \gamma\inv(U_\lambda) \cup \gamma\inv(\Reg X)$.
Recalling from purity of branch locus that $\gamma\inv(\Reg X) \subset \Reg Y$ and that $g$ is \'etale over $\Reg Y$, we conclude that $g$ is \'etale over all of $Y^\circ$.
In other words, $g^\circ$ is \'etale.
This ends the proof.
\end{proof}

%

\subsection{Proof of \cref{global flat tangent}}

We proceed by induction on the dimension of the singular locus $\Sing X$.
If $\dim \Sing X = -1$, \emph{i.e.}~$\Sing X$ is empty, then we may simply take $\wt X = X$.
Otherwise we apply \cref{gkp max strata} to obtain an open subset $\Reg X \subset X^\circ \subset X$ which has a \gkp cover $\gamma^\circ \from \wt{X^\circ} \to X^\circ$ and satisfies $\dim(X \setminus X^\circ) \le \dim \Sing X - 1$.
We may restrict $\gamma^\circ$ to an \'etale cover $\gamma'$ of $\Reg X$, and then in turn extend $\gamma'$ to a \qe cover of $X$, say $\gamma \from \wt X \to X$.
By the uniqueness part of~\cite[Theorem~3.4]{DG94}, the map $\gamma$ will be an extension of $\gamma^\circ$.
\[ \xymatrix{
X' \ar@{ ir->}[rr] \ar_-{\gamma'}[d] & & \wt{X^\circ} \ar@{ ir->}[rr] \ar_-{\gamma^\circ}[d] & & \wt X \ar^-\gamma[d] \\
\Reg X \ar@{ ir->}[rr] & & X^\circ \ar@{ ir->}[rr] & & X
} \]
The space $\wt{X^\circ}$ is smooth thanks to \cref{gkp smooth}.
Thus we see that $\dim \Sing{\wt X} \le \dim \Sing X - 1$.
Since $\T{\Reg{\wt X}}$ is still flat, the induction hypothesis applied to $\wt X$ yields a \qe cover $\wh X \to \wt X$ with $\wh X$ smooth.
We arrive at the desired smooth cover of $X$ by taking the Galois closure of the composed map $\wh X \to X$. \qed

\subsection{Proof of \cref{local flat tangent}}

We still argue by induction on $\dim \!\big( \Sing{(X,x)} \big)$ as above.
If $(X, x)$ is smooth, there is nothing to show.
We just have to check that we can apply the induction hypothesis: to start with, we choose $X$ as a small neighborhood of $x$ with a finite Whitney stratification.
We can then find a \qe cover $f \from \wt X \to X$ with $\dim(\Sing{\wt X}) < \dim(\Sing X)$.
It is enough to pick a point $\wt x \in f\inv(x) \subset \wt X$: this $\wt x$ has a neighborhood $\wt U$ that is a quotient singularity.
The open neighborhood $U \defn f(\wt U)$ of $x$ has a smooth \qe cover and $(X, x)$ is then a quotient singularity according to \cref{lem:quotient sing}. \qed

\section{Holonomy of singular Ricci-flat metrics} \label{sec:holonomy} \label{section holonomy}

The goal of this section is to prove \cref{intro:hol}, which combined with the Bochner principle leads to \cref{intro:structure}.

\subsection{A vanishing lemma}

The following lemma will be used repeatedly.
The upshot is that a vanishing assumption on $X$ implies a certain vanishing result on~$\Reg X$.

\begin{lem}[Vanishing lemma] \label{lem:virab}
Let $X$ be a compact \kahler space with klt singularities such that $q(X) = 0$.
Then one has
\[ \HH1.\Reg X.\C. = 0. \]
If $X$ additionally satisfies $\wt q(X) = 0$, then any representation $\rho \from \pi_1(\Reg X) \to G$ with virtually abelian image actually has finite image.
\end{lem}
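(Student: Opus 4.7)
My plan is to establish the first statement via Deligne's mixed Hodge theory on a log resolution of $X$, and then to bootstrap the second statement by passing to a suitable \qe cover.

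For $\HH1.\Reg X.\C. = 0$, I would fix a log resolution $\pi \from Y \to X$ such that $E \defn \pi\inv(\Sing X)_{\mathrm{red}} = \sum_i E_i$ is a simple normal crossings divisor; this ensures that $U \defn Y \setminus E$ is biholomorphic to $\Reg X$. The weight filtration on $\HH1.U.\C.$ then yields an exact sequence
\[
0 \lto \HH1.Y.\C. \lto \HH1.U.\C. \lto \ker\!\Big( \bigoplus_i \C \lto \HH2.Y.\C. \Big) \lto 0,
\]
in which the right-hand arrow sends the generator of the $i$-th summand to the class $[E_i]$. By Hodge theory on $Y$ one has $\HH1.Y.\C. = \HH0.Y.\Omegap Y1. \oplus \HH1.Y.\O Y.$; the first summand has dimension $q(X)$ by \cref{rem KS}, while rationality of klt singularities gives $\HH1.Y.\O Y. \isom \HH1.X.\O X.$, again of dimension $q(X)$, so both vanish. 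The kernel on the right is zero because the classes of $\pi$-exceptional divisors over a bimeromorphic contraction to a normal compact \kahler space are linearly independent in $\HH2.Y.\R.$ by a standard Grauert-type negativity argument.

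For the second assertion, I may assume $G = \img(\rho)$ and pick a finite-index abelian subgroup $G_0 \subset G$. Set $H \defn \rho\inv(G_0)$; replacing $H$ by the intersection of its finitely many conjugates I may assume $H$ is normal of finite index in $\pi_1(\Reg X)$. Via the equivalence of categories between finite \'etale covers of $\Reg X$ and \qe covers of $X$ (see the discussion after \cref{def:quasietale}), $H$ corresponds to a \qe cover $X' \to X$ with $\pi_1(\Reg{X'}) = H$. The space $X'$ is again a compact \kahler space with klt singularities, and \cref{wt q invar} gives $\wt q(X') = \wt q(X) = 0$; in particular $q(X') = 0$. Applying the first part to $X'$ yields $\HH1.\Reg{X'}.\C. = 0$, equivalently $H^{\mathrm{ab}} \tensor \C = 0$, so $H^{\mathrm{ab}}$ is torsion. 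Since $X$ is compact, $\pi_1(\Reg X)$ is finitely generated, hence so is its finite-index subgroup $H$, and therefore so is $H^{\mathrm{ab}}$; a finitely generated torsion abelian group is finite. As $G_0 = \rho(H)$ is abelian it factors through $H^{\mathrm{ab}}$ and is thus finite, and combined with $[G : G_0] < \infty$ this forces $|G| < \infty$.

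The main obstacle is the first part: one must transfer the vanishing $q(X) = 0$ to the resolution via the Kebekus--Schnell identification of \cref{rem KS} together with rationality of klt singularities, and one must know that exceptional divisor classes remain linearly independent in $\HH2.Y.\R.$ in the compact \kahler (not merely algebraic) setting. Once $\HH1.\Reg X.\C. = 0$ is in hand, the second statement follows almost formally from the invariance of $\wt q$ under \qe covers together with the covering-space dictionary between $\pi_1(\Reg X)$ and quasi-\'etale covers of $X$.
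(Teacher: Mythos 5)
Your argument is correct. For the second assertion it is essentially the paper's own proof: both pass to the \qe cover corresponding to a finite-index normal subgroup $H \trianglelefteq \pi_1(\Reg X)$ with abelian image, use the invariance of $\wt q$ under \qe covers (\cref{wt q invar}) to get $q = 0$ upstairs, and conclude from the vanishing of $\HH1.\Reg{X'}.\C.$ plus finite generation of $H_1$ that the abelian piece is finite torsion. For the first assertion you take a genuinely different, though parallel, route. The paper splits $\HH1.Y \setminus E.\C.$ by the Hodge-filtration sequence $0 \to \HH0.Y.\Omegap Y1(\log E). \to \HH1.Y\setminus E.\C. \to \HH1.Y.\O Y. \to 0$ and kills the left-hand term via the extension theorem for reflexive $1$-forms of \cite{KS} (logarithmic $1$-forms on $Y$ restrict to $1$-forms on $Y \setminus E$, which by \cite{KS} come from $\HH0.Y.\Omegap Y1.$, and this vanishes by Hodge symmetry and $q(X)=0$). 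You instead use the weight-filtration/Gysin sequence, whose left term $\HH1.Y.\C.$ vanishes for the same Hodge-theoretic reasons, but whose right term requires the linear independence of the classes $[E_i]$ in $\HH2.Y.\R.$. That independence does hold here: every component of $E$ is $\pi$-exceptional since $X$ is normal, and the negativity lemma is local over the base and valid for proper bimeromorphic morphisms of normal complex spaces, so a numerically trivial exceptional combination $\sum a_i E_i$ must vanish. The trade-off is thus that you replace the appeal to \cite{KS} by an appeal to the negativity lemma; both inputs are legitimately available in the compact \kahler setting, so your proof closes.
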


\begin{exm} \label{kummer}
The example of the (simply connected) singular Kummer surface $X = \factor T{\pm1}$ where $T$ is a complex $2$-torus shows that one can have $q(X) = 0$ while $\pi_1(\Reg X)$ is virtually abelian, yet infinite.
Indeed, there is a short exact sequence
\[ 0 \lto \Z^4 \lto \pi_1(\Reg X) \lto \factor{\Z}{2 \Z} \lto 0. \]
Of course, in this example $\wt q(X) = 2 > 0$.
\end{exm}

\begin{rem}
The proof of the Lemma combined with~\cite{KS} shows more generally that if $X$ is compact \kahler with klt singularities and $\wh X \to X$ is a resolution, then one has a (non-canonical) isomorphism
\[ \HH1.\Reg X.\C. = \HH1.\wh X \setminus E.\C. \isom \HH0.X.\Omegar X1. \oplus \HH1.X.\O X.. \]
On the other hand, since $X$ has only rational singularities, one also has
\[ \HH1.X.\C. \isom \HH1.\wh X.\C. = \HH0.\wh X.\Omegap{\wh X}1. \oplus \HH1.\wh X.\O{\wh X}. \]
canonically.
Hence the restriction map $\HH1.X.\C. \to \HH1.\Reg X.\C.$ is an isomorphism.
Equivalently, the \emph{abelianization} of the natural surjection $\pi_1(\Reg X) \to \pi_1(X)$ has finite kernel.
Cf.~\cref{kummer} above.
\end{rem}

\begin{proof}[Proof of \cref{lem:virab}]
Let $\wh X \to X$ be a projective log resolution of $X$, isomorphic over $X_{\rm reg}$, and let $E = \sum_i E_i$ be the exceptional divisor.
In what follows, one will identify $\Reg X$ with $\wh X \setminus E$.
One has
\begin{equation} \label{H11}
\HH1.\wh X.\O{\wh X}. = 0
\end{equation}
since $q(X) = 0$, \emph{cf.}~\cref{dfn irreg}.
Since $\wh X$ is a compact \kahler manifold, one also has $\HH0.\wh X.\Omegap{\wh X}1. = 0$. By \cite{KS}, this implies that $\HH0. X.\Omegap{ X}{[1]}. =\HH0.\wh X\setminus E.\Omegap{\wh X}1. =  0$ and, in particular, one gets
\begin{equation} \label{logvanishing}
\HH0.\wh X.\Omegap{\wh X}1(\log E). = 0.
\end{equation}
By Deligne's theorem~\cite[Theorem~8.35(b)]{Voi02}, there is a short exact sequence
\[ 0 \lto \HH0.\wh X.\Omegap{\wh X}1(\log E). \lto \HH1.\wh X \setminus E.\C. \lto \HH1.\wh X.\O{\wh X}. \lto 0, \]
so that~\labelcref{H11} and~\labelcref{logvanishing} yield the requested vanishing
\begin{equation} \label{xregvanishing}
\HH1.\Reg X.\C. = \HH1.\wh X \setminus E.\C. = 0.
\end{equation}
Using the universal coefficient theorem, this implies that the finitely generated abelian group $\Hh1.\Reg X.\Z.$ has rank zero, hence is finite.

Let us now move on to the second part of the lemma.
Without loss of generality, one can assume that $\rho$ is surjective, \emph{i.e.}~$\rho \big( \pi_1(\Reg X) \big) = G$.
Under the assumptions, there exists a finite index normal subgroup $H \trianglelefteq \pi_1(\Reg X)$ such that $\rho(H) \subset G$ is abelian.
The group $H$ is realized as the fundamental group of a \kahler manifold $Y^\circ$ equipped with an \'etale cover $f^\circ \from Y^\circ \to \Reg X$, \emph{i.e.}~$f^\circ_* \big( \pi_1(Y^\circ) \big) = H$.
One can extend $f^\circ$ to a quasi-\'etale cover $f \from Y \to X$ with $Y$ a compact \kahler space with klt singularities.
The inclusion $Y^\circ \subset \Reg Y$ induces an isomorphism $\pi_1(Y^\circ) \isom \pi_1(\Reg Y)$ since $\Reg Y \setminus Y^\circ \subset f\inv(\Sing X)$ has codimension at least two in $\Reg Y$.

Now, one has $q(Y) = \wt q(X) = 0$, hence $\HH1.\Reg Y.\Z.$ is finite by the first part of the lemma.
Since the image of the representation $\tau \defn \rho \circ f^\circ_*$ of $\pi_1(Y^\circ) \isom \pi_1(\Reg Y)$ is abelian, it factors through $\HH1.\Reg Y.\Z.$ and is therefore finite.
The following diagram visualizes the argument.
\[ \xymatrix{
\pi_1(Y^\circ) \ar_-{f^\circ_*}[d] \ar^-\tau[drr] & & \\
\pi_1(\Reg X) \ar_-\rho[rr] & & G
} \]
The lemma follows since $f^\circ_* \big( \pi_1(Y^\circ) \big) \trianglelefteq \pi_1(\Reg X)$ has finite index.
\end{proof}

\subsection{Holonomy of the non-flat factors}

We fix the following setup.

\begin{setup} \label{setup2}
Let $X$ be a $n$-dimensional complex, compact \kahler space with canonical singularities and trivial canonical sheaf, $K_X \sim 0$.
Next, we fix a \kahler class $\alpha \in \HH2.X.\R.$ and consider the unique singular Ricci-flat metric $\omega_\alpha \in \alpha$ as in \cref{setup1}.
\end{setup}

It follows from~\cite[Theorem~A]{GSS} (see also~\cite[Theorem~8.1]{GGK}, where the assumption on the projectivity of $X$ is not used, \emph{cf.}~\cref{comparison}) that one can decompose the tangent sheaf as
\begin{equation} \label{decomp}
\T X = \bigoplus_{i \in I} \sE_i
\end{equation}
where the $\sE_i$ are stable bundles with slope zero (with respect to $\alpha$) and are such that $\sE_i\big|_{\Reg X}$ is a parallel subbundle of $\sT_{\Reg X}$ with respect to $\omega_\alpha$.
We set $n_i \defn \rk(\sE_i)$.
The holonomy group $G \defn \mathrm{Hol} \big( \T{\Reg X}, \omega_\alpha \big)$ can be decomposed as a product $G = \prod_i G_i$ where $G_i = \mathrm{Hol} \big( \sE_i\big|_{\Reg X}, \omega_\alpha \big)$ is irreducible.

By~\cite[Proposition~7.3]{GGK}, there exists a \qe cover $f \from Y \to X$ such that the decomposition induced by the standard representation of the holonomy group $G_Y \defn \mathrm{Hol} \big( \sT_{\Reg Y}, f^*\omega_{\alpha} \big)$ is a refinement of the one of the identity component~$G_Y^\circ$.
Note that the construction of this so-called \emph{weak holonomy cover} above is independent of the projectivity assumption in~\cite{GGK}.
From now on, we will replace $X$ by $Y$ and work with the following

\begin{awlog}
The restricted holonomy $G_i^\circ$ of each factor $\sE_i$ in~\labelcref{decomp} is either trivial or irreducible.
\end{awlog}

By~\cite[Proposition~5.3]{GGK}, there are only three possibilities for $G_i^\circ$: one has
\[ G_i^\circ =
\begin{cases}
\set 1 & \text{or} \\
\mathrm{SU}(n_i), \qquad n_i \ge 3, & \text{or} \\
\mathrm{Sp}(n_i/2), \quad\, \text{$n_i \ge 2$ even}.
\end{cases} \]
We set $J \defn \set{i \in I \;\big|\; G_i^\circ = \set 1}$, $K \defn I \setminus J$ and $\sF \defn \bigoplus_{j\in J} \sE_j$.
In other words, one has
\begin{equation} \label{decompt}
\T X = \sF \oplus \bigoplus_{k \in K} \sE_k
\end{equation}
with $\mathrm{Hol}^\circ \big( \sF\big|_{\Reg X}, \omega_\alpha \big) = \set 1$ while the full holonomy group $G_k = \mathrm{Hol} \big( \sE_k\big|_{\Reg X}, \omega_\alpha \big)$ satisfies $G_k^\circ = \mathrm{SU}(n_k)$ or $G_k^\circ = \mathrm{Sp}(n_k/2)$.
In either case, one has
\begin{equation} \label{u1}
\factor{G_k}{G_k^\circ} \subset \mathrm{U}(1)
\end{equation}
by the proof of~\cite[Lemma~7.8]{GGK}.
Consider a torus cover $\gamma \from T \x Z \to X$; then $\mathrm{pr}_T^* \T T$ is a direct summand of $\gamma^{[*]} \sF$, so that $\gamma^{[*]} \sE_k$ is canonically identified with a direct summand of $\mathrm{pr}_Z^* \sT_Z$.
Hence one can realize $G_k$ as an irreducible factor of the holonomy group of $\Reg Z$.
Since $\wt q(Z) = 0$, the canonical surjection
\begin{equation*}
\rho \from \pi_1(\Reg Z) \twoheadrightarrow \factor{G_k}{G_k^\circ}
\end{equation*}
combined with \cref{lem:virab} leads to the following

\begin{prp} \label{hol gk}
For any $k \in K$, the group $\factor{G_k}{G_k^\circ}$ is finite. \qed
\end{prp}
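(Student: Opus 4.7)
The plan is to put together the structural facts assembled before the statement and apply the vanishing lemma \cref{lem:virab} to the holonomy representation of the non-flat factor. Concretely, fix $k \in K$. Pulling back the decomposition \labelcref{decompt} along a torus cover $\gamma \from T \x Z \to X$ produces a direct summand $\gamma^{[*]} \sE_k \subset \mathrm{pr}_Z^{*} \sT_Z$, so the full holonomy group $G_k$ of $\sE_k|_{\Reg X}$ is canonically identified with one of the irreducible holonomy factors of $(\Reg Z, \omega|_{\Reg Z})$.

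The key input is the classical fact that for a smooth Riemannian manifold, the assignment ``loop $\mapsto$ its holonomy modulo $G^\circ$'' produces a surjection of the fundamental group onto $G/G^\circ$. Applied to the smooth (incomplete) Ricci--flat locus $\Reg Z$, this yields the canonical surjection
\[
\rho \from \pi_1(\Reg Z) \twoheadrightarrow \factor{G_k}{G_k^\circ}.
\]
By \labelcref{u1} the target is a subgroup of $\mathrm{U}(1)$, hence abelian, in particular virtually abelian. Since $Z$ is a factor of a torus cover of $X$, one has $\wt q(Z) = 0$ by \cref{tc.3} in \cref{torus cover}, so \cref{lem:virab} applies to $\rho$ and tells us that the image of $\rho$ is finite. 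As $\rho$ is surjective, this image equals $G_k/G_k^\circ$, and the claim follows.

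The main thing to double-check is really bookkeeping rather than a serious obstacle: one must verify that the holonomy group of $\sE_k$ on $\Reg X$ does coincide with the corresponding factor of the holonomy of $\Reg Z$ under the \qe pullback (this uses that $\gamma$ is \'etale in codimension one and that $\gamma^{[*]}\sE_k$ is a parallel summand, already secured by the construction of the torus cover and the splitting \labelcref{decompt}), and that no further passage to restricted holonomy is lost when going from $X$ to $Z$. The reduction to the smooth case of the ``loops cover $G/G^\circ$'' fact is unproblematic because the metric $\omega$ is genuinely smooth on $\Reg Z$; no singular version of the statement is needed.
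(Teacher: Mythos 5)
Your proposal is correct and follows exactly the paper's own argument: pull back \labelcref{decompt} along a torus cover so that $\gamma^{[*]}\sE_k$ becomes a parallel summand of $\mathrm{pr}_Z^*\sT_Z$, use the surjection $\pi_1(\Reg Z)\twoheadrightarrow \factor{G_k}{G_k^\circ}$ together with \labelcref{u1}, and conclude via \cref{lem:virab} since $\wt q(Z)=0$. The ``bookkeeping'' point you flag (that passing to the \qe cover at worst replaces $G_k$ by a finite-index subgroup containing $G_k^\circ$, which is harmless) is treated at the same level of detail in the paper, so nothing further is needed.
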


\begin{cor} \label{corhol}
In \cref{setup2} and up to performing a \qe cover, the \emph{full} holonomy group of a non-flat, irreducible factor $\sE_k$ in the decomposition~\labelcref{decompt} is either $\mathrm{SU}(n_k)$ or $\mathrm{Sp}(n_k/2)$. \qed
\end{cor}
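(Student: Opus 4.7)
The plan is to build, on top of the covers already performed in \cref{setup2}, one further quasi-\'etale cover of $X$ on which, for every $k\in K$, the finite component group $G_k/G_k^\circ$ becomes trivial. Since \cref{hol gk} tells us that each $G_k/G_k^\circ$ is finite, and since the index set $K$ is finite too, the product
\[
\Gamma \;\defn\; \prod_{k\in K} \factor{G_k}{G_k^\circ}
\]
is a finite group. The whole proof then reduces to finding a quasi-\'etale cover realizing the kernel of a natural representation of $\pi_1(\Reg Z)$ into $\Gamma$.

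More precisely, I would proceed as follows. First, pass to a torus cover $\gamma\from T\x Z\to X$ from \cref{intro:tc}, so that each $\gamma^{[*]}\sE_k$ identifies with a parallel direct summand of $\mathrm{pr}_Z^*\T Z$ and its full holonomy on $\Reg Z$ is again $G_k$ (this is exactly the set-up used just before \cref{hol gk}). Next, consider the diagonal representation
\[
\rho\from \pi_1(\Reg Z) \lto \prod_{k\in K} \factor{G_k}{G_k^\circ} \;=\; \Gamma,
\]
obtained from the canonical surjections $\pi_1(\Reg Z)\surj G_k/G_k^\circ$. By \cref{hol gk}, $\rho$ has finite image, hence $H\defn\ker\rho$ is a normal subgroup of finite index in $\pi_1(\Reg Z)$. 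The subgroup $H$ corresponds to a finite \'etale Galois cover $Z_0^\circ\to\Reg Z$, which extends to a quasi-\'etale Galois cover $Z_0\to Z$ thanks to purity of the branch locus and \cite[Theorem~3.4]{DG94} (cf.~also \cref{lem:galois closure}).

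Finally, set $Y\defn T\x Z_0$ and note that the composition $Y\to T\x Z\to X$ is a quasi-\'etale cover of $X$. Pulling the decomposition \labelcref{decompt} back to $Y$ yields a decomposition of $\T Y$ whose $k$-th non-flat summand is the pullback of $\sE_k$. Its restricted holonomy is still $G_k^\circ$ (passing to a connected cover preserves the restricted holonomy, since loops generating $G_k^\circ$ lift), while its full holonomy is the image of $\pi_1(\Reg Y)\to G_k$, which by the very definition of $H$ coincides with $G_k^\circ$. This is precisely the desired conclusion.

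The main point of the argument is \cref{hol gk} together with the fact that $K$ is finite; once these are in hand, the only thing to watch out for is that extending $Z_0^\circ\to\Reg Z$ to a quasi-\'etale cover of $Z$ really works in the analytic setting (which it does, by~\cite[Theorem~3.4]{DG94}) and that pulling back the decomposition does not spoil the description of the holonomy factors---a direct check.
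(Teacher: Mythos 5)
Your argument is correct and is precisely the reasoning the paper leaves implicit (the corollary is stated with an immediate \qed after \cref{hol gk}): the finite component groups $G_k/G_k^\circ$ correspond, via the product of the canonical surjections from $\pi_1(\Reg Z)$, to a finite-index normal subgroup whose associated \'etale cover of $\Reg Z$ extends to a quasi-\'etale cover on which the full holonomy of each non-flat factor collapses to its (unchanged) restricted holonomy. The details you supply --- extension of the cover via \cite[Theorem~3.4]{DG94}, invariance of $G_k^\circ$ under covers, and finiteness of $K$ --- are exactly the right ones and match the paper's approach.
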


We can now move on to the

\begin{proof}[Proof of \cref{intro:hol}] \label{proof theorem holonomy}
Starting from \cref{setup2}, we pass to an additional cover of $X$ so that the conclusion of \cref{corhol} holds. \\

\textit{First item.} Recall that the sheaves $\sE_i$ from~\labelcref{decomp} are such that $\sE_i\big|_{\Reg X}$ are parallel with respect to $\om$. 
Since the Chern connection $D$ is torsion-free, we have $[u, v] = D_u v - D_v u$ for any local smooth vector fields $u,v$ on $\Reg X$. Applying this formula to local sections of $\sE_i\big|_{\Reg X}$ and keeping in mind that the latter subbundle is parallel, we see that $\sE_i\big|_{\Reg X}$ is preserved under Lie bracket. Moreover, $\sE_i$ is saturated in $\sT_X$, being a direct summand of $\sT_X$.
Combining those two facts, we get that the image of the Lie bracket $[-,-] \from \bigwedge^2 \sE_i \to \factor{\T X}{\sE_i}$ is a torsion-free sheaf supported on $\Sing X$, hence it vanishes.
This shows that $\sE_i$ defines indeed a foliation on $X$.
Although a direct sum of foliations needs not be integrable in general, the argument above still applies and shows that $\sF = \bigoplus_{j\in J} \sE_j$ is a foliation as well.
Next, we know that $K_X$ is a trivial line bundle and that for any $k \in K$, one has $\det \sE_k \isom \O X$ since the holonomy of $\sE_k$ lies in $\mathrm{SU}(n_k)$ by \cref{corhol}.
This implies that $\det \sF \isom \O X$.

Note that to prove that the sheaves $\sE_i$ are foliations, we could have used the last result (\emph{i.e.} $\det  \sE_i \isom \O X$) and reproduced the arguments of \cite[Theorem~7.11]{GKP} instead of appealing to the Bochner principle. \\

\textit{Second item.} This is clear from the definition of $\sF$ and the first item. \\

\textit{Third item.} The statement on the holonomy groups is contained in \cref{corhol}. Now, we claim that $\sE_k$ is strongly stable with respect to $\alpha$. Since restricted holonomy does not change after passing to a \qe cover, it is enough to show that $\sE_k$ is stable with respect to $\alpha$. But if it were not the case, any saturated destabilizing subsheaf would be parallel with respect to $\om_\alpha$ on $\Reg X$ by Bochner's principle for bundles, \emph{cf.} \cref{comparison}. Therefore, the group $\mathrm{Hol}(\sE_k|_{\Reg X},\om_\alpha)$ would not act irreducibly on $\mathbb C^{n_k}$, which is a contradiction.  
Note that if one only wants to show strong stability of $\sE_k$ (and not compute its holonomy), then the argument becomes quite simpler and relies only on the polystability of $\sT_X$, \emph{cf.} \cite[Corollary~7.3]{GKP}.

Now, let $\beta \in \HH2. X. \mathbb R.$ be a \kahler class and let us show that $\sE_k$ is strongly stable with respect to $\beta$. As before, it is enough to show stability. Let $\om_\beta\in \beta$ be the singular Ricci-flat metric. 
Since $c_1(\sE_k)=0$, a (saturated) $\beta$-destabilizing subsheaf $0\neq \sG\subsetneq \sE_k$ satisfies $c_1(\sG)\cdot \beta^{n-1}=0$. By polystability of $\sT_X$ with respect to $\beta$, we have a holomorphic splitting $\sT_X=\sG\oplus \sG^\perp$ over $\Reg X$ which extends over $X$ by reflexivity, \emph{cf.} proof of \cite[Theorem~A]{GSS} or \cite[Claim~9.16]{GGK}. Here, $\perp$ is meant with respect to $\om_\beta$. Since $\sT_X$ is semistable with respect to $\alpha$, we have necessarily $c_1(\sG)\cdot \alpha^{n-1}=0$. This contradicts the $\alpha$-stability of $\sE_k$ established earlier. \\

\textit{Fourth item.} Assume that $X = T \x Z$, where $T$ is a complex torus and $\wt q(Z) = 0$.
We need to show that the decomposition $\T X = \mathrm{pr}_T^*\sT_T \oplus \mathrm{pr}_Z^* \T Z$ is such that $\mathrm{pr}_T^* \T T$ is a direct summand of $\sF$.
The Bochner principle for bundles (\emph{cf.}~\cref{comparison}) shows that over $\Reg X = T \x \Reg Z$, both summands are parallel subbundles and, in particular, $\om$ can be decomposed as $\om_\alpha = \mathrm{pr}_T^* \, \om_T \oplus \mathrm{pr}_Z^* \, \om_Z$ where $\om_T$ (resp.~$\om_Z$) is a \kahler Ricci-flat metric on $T$ (resp.~$\Reg Z$).
Since a Ricci-flat \kahler metric on a torus is necessarily flat, we have $\mathrm{Hol}^\circ \big( \mathrm{pr}_T^* \T T \big|_{\Reg X}, \om_{\alpha} \big) = \mathrm{Hol}^\circ(T, \om_T) = \set 1$.
The first part of the statement now follows easily.
The second part follows from \cref{prop:holonomy flat factor} below.
\end{proof}

\subsection{On the flat factor}

Since $\T T$ is a direct summand of $\sF$, it is clear that $\wt q(X) \le \rk \sF$.
We conjecture that equality always holds.
In particular, if $\wt q(X) = 0$ then the flat factor $\sF$ in the decomposition~\labelcref{decompt} should be zero.
In the projective case, the conjecture was established by~\cite[Corollary~7.2]{GGK} as a consequence of Druel's algebraic integrability result for flat sheaves~\cite{Dru16}.
We are able to prove two partial results in this direction:

\begin{enumerate}
\item If $\rk \sF = n \defn \dim X$, then also $\wt q(X) = n$.
This is a direct consequence of \cref{flat tangent implies torus} below, since $\rk \sF = n$ means that $\T{\Reg X}$ is flat and torus quotients obviously have $\wt q(X) = n$.
\item The conjecture can be derived from the complex space version of~\cite{GKP16}, \emph{cf.}~\cref{prop:holonomy flat factor} and \cref{rem:existence GKP cover}.
\end{enumerate}

\begin{prp} \label{flat tangent implies torus}
Let $X$ be a normal compact \kahler space that has only klt singularities.
If $\T{\Reg X}$ is flat, then $X$ is a torus quotient.
\end{prp}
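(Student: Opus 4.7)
The plan is to reduce to the smooth case via \cref{global flat tangent} and then apply the classical Beauville--Bogomolov decomposition theorem.

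First, I would apply \cref{global flat tangent} to obtain a quasi-\'etale Galois cover $\gamma \from \wt X \to X$ with $\wt X$ smooth. Since $\gamma$ is quasi-\'etale, the complement $\wt X \setminus \gamma\inv(\Reg X)$ is analytic of codimension at least two in the smooth manifold $\wt X$; in particular $\pi_1\bigl(\gamma\inv(\Reg X)\bigr) \isom \pi_1(\wt X)$, so the flat structure on $\gamma^* \T{\Reg X}$ extends uniquely to a flat bundle on all of $\wt X$. By reflexivity this extension coincides with $\T{\wt X}$, so $\T{\wt X}$ is itself flat; in particular $\cc1{\wt X} = 0$.

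Next, applying the classical Beauville--Bogomolov decomposition theorem to the compact \kahler manifold $\wt X$, one obtains a finite \'etale cover $f \from Y \to \wt X$ with $Y \isom T \x \prod_i Y_i$, where $T$ is a complex torus and each $Y_i$ is a simply connected strict Calabi--Yau or irreducible holomorphic symplectic manifold. The pullback of $\T{\wt X}$ to $Y$ remains flat and decomposes as $\T T \oplus \bigoplus_i \T{Y_i}$, so each $\T{Y_i}$ is a flat holomorphic bundle on a simply connected base, hence holomorphically trivial. This would give $\hh0.Y_i.\T{Y_i}. \ge \dim Y_i$; however, the isomorphism $\T{Y_i} \isom \Omegap{Y_i}{n_i - 1}$ induced by a trivialisation of $\can{Y_i}$ identifies this group with $H^{n_i - 1, 0}(Y_i)$, which vanishes for a strict Calabi--Yau by definition and for an IHS because $n_i - 1$ is odd. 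Hence each $Y_i$ has dimension zero and $Y$ is a complex torus.

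Finally, the composite $Y \to \wt X \to X$ is a quasi-\'etale cover of $X$ by a complex torus. Taking a Galois closure via \cref{lem:galois closure} yields a quasi-\'etale Galois cover $\wt T \to X$; the intermediate map $\wt T \to Y$ is quasi-\'etale into a smooth space, hence \'etale by purity of the branch locus, so $\wt T$ is still a complex torus. The deck group then acts on $\wt T$ freely in codimension one, exhibiting $X$ as the desired torus quotient. I expect the most delicate bookkeeping to be the extension/reflexivity argument of the first step and the Hodge-theoretic vanishing of the second, but each is a routine combination of standard ingredients once \cref{global flat tangent} is available.
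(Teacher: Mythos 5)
Your argument is correct and follows the same overall skeleton as the paper's — apply \cref{global flat tangent} to get a smooth quasi-\'etale cover $\wt X$, show $\T{\wt X}$ is flat, conclude $\wt X$ is a torus quotient, then descend via Galois closure — but you replace the paper's single citation at the crucial middle step with an independent derivation. The paper simply invokes the classical characterization of torus quotients (flat tangent bundle of a compact \kahler manifold $\Rightarrow$ torus quotient, derived from Yau's theorem and uniformization, \emph{cf.}~Tian's book), whereas you re-prove this special case by running the Beauville--Bogomolov decomposition of $\wt X$ and then using Hodge-theoretic vanishing ($h^{n_i-1,0}(Y_i)=0$ for strict Calabi--Yau and for IHS of odd $n_i - 1$) together with the flatness of $\T{Y_i}$ as a direct summand of a flat bundle restricted to a simply connected slice, to kill each non-torus factor. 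This is more self-contained but longer; the paper's route is shorter and avoids invoking BB. Two small points worth tightening in your write-up: (i) before applying Beauville--Bogomolov you should record that $\wt X$ is still \kahler — this is not automatic for covers of singular \kahler spaces, and the paper cites Varouchas for it; (ii) your extension/reflexivity argument showing $\T{\wt X}$ is flat is correct, but it essentially re-proves a fact that is already built into the proof of \cref{global flat tangent} (via \cref{gkp smooth}), which is why the paper can say ``by construction''.
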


\begin{proof}
According to \cref{global flat tangent}, $X$ has a smooth \qe cover $\wt X$.
This compact space $\wt X$ is still \kahler~\cite[Corollary~3.2.2]{Var} and, by construction, the tangent bundle of $\wt X$ is flat.
We can then apply the classical characterization of torus quotients derived from Yau's theorem~\cite{Yau78} and the uniformization theorem, \emph{cf.}~\emph{e.g.}~\cite[Theorem~2.13]{Tian00}, to conclude that $\wt X$ is a torus quotient.
Hence, so is~$X$ by combining \cref{lem:galois closure} with the fact that an \'etale cover of a torus is again a torus.
\end{proof}

\begin{prp} \label{prop:holonomy flat factor}
In \cref{setup2}, assume that $X$ admits a \gkp cover.
Then the equality $\rk \sF = \wt q(X)$ holds.
\end{prp}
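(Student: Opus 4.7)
The inequality $\wt q(X) \le \rk \sF$ is already established by item~4 of \cref{intro:hol}. The plan for the reverse inequality is to reduce to the case $\wt q(X) = 0$ and show $\sF = 0$. First I would replace $X$ by its GKP cover, which preserves both invariants; then apply \cref{torus cover} to obtain a quasi-\'etale cover $\gamma \from T \x Z \to X$ with $T$ a torus of dimension $\wt q(X)$ and $\wt q(Z) = 0$. Since $X$ is GKP, $\gamma$ is actually \'etale, so $T \x Z$ is still GKP (finite \'etale covers of GKP spaces being GKP), and then $Z$ is GKP as well (quasi-\'etale covers of $Z$ pull back to quasi-\'etale covers of $T \x Z$). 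By Bochner, the Ricci-flat metric on $T \x Z$ is a product (\emph{cf.}~proof of item~4 of \cref{intro:hol}), so the flat factor decomposes as $\sF_{T \x Z} = \mathrm{pr}_T^* \T T \oplus \mathrm{pr}_Z^* \sF_Z$, reducing the statement to showing $\sF_Z = 0$.

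Renaming $Z$ to $X$, assume for contradiction that $\sF \ne 0$ of rank $r \ge 1$, with $X$ GKP and $\wt q(X) = 0$. By \cref{rem:extension}, the flat unitary bundle $\sF|_{\Reg X}$ extends to a flat unitary bundle on $X$ corresponding to a representation $\rho \from \pi_1(X) \to \mathrm{SU}(r)$ (the factorization through $\mathrm{SU}$ comes from $\det \sF = \O X$). The key technical step is to show that $\rho$ has finite image. The main input is the FAb property of $\pi_1(X)$: every finite-index subgroup of $\pi_1(X)$ corresponds to a finite \'etale cover $Y \to X$, and $\wt q(X) = 0$ forces $q(Y) = 0$, hence $H^1(Y, \R) = 0$, so $\pi_1(Y)^{\mathrm{ab}}$ is finite. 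Decomposing $\sF$ into stable summands $\bigoplus \sE_j$ with irreducible monodromies $\rho_j$ and analyzing the Zariski closure $H_j \subset \mathrm{GL}(n_j, \C)$ of each $\rho_j(\pi_1(X))$: FAb combined with Zariski density rules out any torus factor in $H_j^\circ$, so $H_j^\circ$ is semisimple. When $H_j^\circ$ is trivial (which always occurs in the rank-one case, by \cref{lem:virab}), $\rho_j$ has finite image. The hard part will be the case where $H_j^\circ$ is a non-trivial compact semisimple Lie group acting irreducibly on $\C^{n_j}$; ruling this out requires an additional rigidity argument exploiting the subbundle structure $\sF \subset \T X$ together with the vanishing of $h^0$ for all reflexive tensor powers on finite \'etale covers of $X$, which in turn follows from the Bochner principle (\cref{bochner}) and \cref{lem:virab}.

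Once $\rho$ is known to have finite image, the finite \'etale cover $Y \to X$ (\'etale by the GKP property) corresponding to $\ker \rho$ trivializes $\sF|_Y$, yielding $r \ge 1$ non-zero parallel holomorphic vector fields on $Y$ and hence $h^0(Y, \T Y) \ge r$. This contradicts \cref{alb splits} applied to $Y$, which gives $h^0(Y, \T Y) = q(Y) \le \wt q(Y) = \wt q(X) = 0$. Therefore $\sF = 0$, and combined with the reduction in the first paragraph this yields the desired equality $\rk \sF = \wt q(X)$.
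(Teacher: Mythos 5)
Your reductions are sound and match the paper's: pass to the \gkp cover, then to a torus cover, split off $\mathrm{pr}_T^*\T T$, and reduce to showing that $\wt q = 0$ forces $\sF = 0$; the final step (a finite cover trivializing $\sF$ produces $r$ independent vector fields, contradicting $h^0(\T Y) = q(Y) = 0$ via \cref{alb splits}) is also correct. But the heart of the proposition is the finiteness of the image of the monodromy representation $\rho$ of $\sF$, and there your proposal has a genuine gap. You correctly isolate the hard case --- an irreducible summand whose monodromy has non-trivial compact semisimple Zariski closure acting irreducibly --- and then defer it to ``an additional rigidity argument'' that is never given. The mechanism you gesture at (vanishing of $h^0$ of reflexive tensor powers on \'etale covers, via the Bochner principle and \cref{lem:virab}) cannot work: a flat summand with compact semisimple irreducible monodromy $H$ produces, for suitable $p,q$, plenty of parallel hence holomorphic sections of $\sF^{\otimes p}\otimes\sF^{\vee\otimes q}$ (the $H$-invariants, e.g.\ the identity in $\sF\otimes\sF^{\vee}$), so no contradiction with the Bochner principle arises, and the purported vanishing is in any case equivalent to controlling the invariants of $\rho$ --- exactly what you are trying to prove. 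Nothing in the toolkit you invoke rules out, say, a Zariski-dense representation into $\mathrm{SU}(2)$.

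The paper closes this gap with a genuinely non-elementary input from the theory of K\"ahler groups. Since $X$ has canonical singularities, $\pi_1(X)\cong\pi_1(\wh X)$ for a resolution $\wh X$ by Takayama's theorem; $\wh X$ has Kodaira dimension zero and is therefore special in the sense of Campana; and by the theorem of Campana--Claudon--Eyssidieux (\cite[Theorem~6.5]{CCE15}), every linear representation of the fundamental group of a special compact K\"ahler manifold has virtually abelian image. Only then does \cref{lem:virab} (using $\wt q = 0$) upgrade ``virtually abelian image'' to ``finite image.'' This specialness argument is the essential missing ingredient in your proposal; without it, or a substitute of comparable strength, the proof does not go through.
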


\begin{proof}
After replacing $X$ with a \gkp cover, we can assume that $\piet{\Reg X} \isom \piet X$.
We may furthermore replace $X$ by a torus cover $T \x Z \to X$.
It is then sufficient to show that $\T Z$ has no flat factor.
In other words, we may replace $X$ by $Z$ and we need to show that $\wt q(X) = 0$ implies $\rk \sF = 0$.

The flat factor $\sF$ is given by a finite dimensional representation $\rho \from \pi_1(\Reg X)\to \mathrm{SU}(r)$, with $r \defn \rk \sF$.
This representation factors thus through the fundamental group of $X$ (see Remark~\ref{rem:extension}) and it yields $\rho_X \from \pi_1(X)\to \mathrm{SU}(r)$.
Let us consider $\wh X \to X$ a desingularization of $X$: this compact \kahler manifold has vanishing Kodaira dimension and is thus special (in the sense of Campana, \emph{cf.}~\cite[Theorem~5.1]{Campana04}).
Since $X$ has canonical singularities, we know that its fundamental group is unchanged\footnote{\cite[Theorem~1.1]{Takayama2003} is only stated for projective varieties but it holds in the complex analytic setting, its proof being completely local (in the Euclidean topology). It can also be noted that the corresponding result for the \'etale fundamental group is~\cite[Theorem~7.5]{Kol93} and that this result is explicitly stated for normal analytic spaces.} when passing to a smooth model by~\cite[Theorem~1.1]{Takayama2003} hence we can interpret the representation $\rho_X$ as a representation $\rho_{\wh X} \from \pi_1(\wh X) \to \mathrm{SU}(r)$.
The manifold $\wh X$ being special, we know that none of its \'etale covers dominate a variety of general type and~\cite[Theorem~6.5]{CCE15} implies that the image of $\rho_{\wh X}$ is virtually abelian and, in particular, $\img \rho$ is virtually abelian as well.
By \cref{lem:virab}, $\rho$ has finite image.
This means that one can construct a \qe cover $f \from Y \to X$ such that the direct summand $f^{[*]}\sF$ of $\sT_Y$ satisfies $f^{[*]} \sF \isom \O Y^{\oplus r}$.
As $q(Y) = 0$, this can only occur if $r = 0$.
\end{proof}

\begin{rem} \label{rem:existence GKP cover}
The existence of \gkp covers should be true for any compact complex space with klt singularities (or even for Zariski open subsets of such spaces).
In~\cite{GKP16}, the algebraicity assumption is mainly used when appealing to~\cite{Xu14} where it is shown that $\piet{\mathrm{Link}(X, x)}$ is finite for $(X, x)$ klt.
In that article, it is proven that it is possible to extract a \emph{Koll\'ar component}, \emph{i.e.}~a birational morphism $\mu \from Y \to X$ such that the fiber $\mu\inv(x)$ has a natural structure of \Q-Fano variety.
This is achieved by considering a (projective) desingularization $\hat{\mu} \from \wh X \to X$ and running a well-chosen MMP $\wh X \dashrightarrow Y$ over $X$.
It is thus an urgent task to generalize the known results~\cite{BCHM10} to obtain a relative MMP for projective morphisms between normal complex spaces.
\end{rem}

\subsection{Varieties with strongly stable tangent sheaf} \label{section strongly stable variety}

The following definition originates in the projective setting from~\cite{GKP}. 

\begin{dfn}[ICY and IHS varieties]
Let $X$ be a compact \kahler space of dimension $n \ge 2$ with canonical singularities and $\can X \isom \O X$.

\begin{enumerate}
\item We call $X$ \emph{irreducible Calabi--Yau (ICY)} if $\HH0.Y.\Omegar Yp. = 0$ for all integers $0 < p < n$ and all \qe covers $Y \to X$, in particular for $X$ itself.
\item We call $X$ \emph{irreducible holomorphic symplectic (IHS)} if there exists a holomorphic symplectic two-form $\sigma \in \HH0.X.\Omegar X2.$ such that for all \qe covers $\gamma \from Y \to X$, the exterior algebra of global reflexive differential forms is generated by $\gamma^{[*]} \sigma$.
\end{enumerate}
\end{dfn}

\begin{dfn}[Strong stability] \label{strongly stable}
Let $X$ be a compact \kahler space of dimension $n \ge 2$ with klt singularities and trivial first Chern class.
We say that $\T X$ is \emph{strongly stable} if for any \qe cover $f \from Y \to X$, the tangent sheaf $\T Y$ is stable with respect to any \kahler class $a \in \HH2.Y.\R.$.
\end{dfn}

Using \cref{defn stability}, one can rephrase the above definition by saying that $\sT_X$ is strongly stable with respect to any \kahler class. One can show along the same lines as the proof of the third item of \cref{intro:hol} that $\sT_X$ is strongly stable if and only if it is strongly stable with respect to a single \kahler class.  \\

%
%
%

Putting together \cref{bochner}, \cref{corhol} and \cref{flat tangent implies torus}, we get the following result.

\begin{cor}[Spaces with strongly stable tangent sheaf] \label{TX ss structure}
Let $X$ be a compact \kahler space with klt singularities and trivial first Chern class of dimension $n \ge 2$.
If $\T X$ is strongly stable, then $X$ admits a \qe cover that is either a ICY or an IHS variety.
\end{cor}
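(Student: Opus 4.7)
The plan is to combine the torus cover of \cref{torus cover} with the holonomy decomposition of \cref{intro:hol} to collapse $\T X$ to a single irreducible non-flat summand, and then to convert the resulting statement about the full holonomy group into the vanishing conditions defining ICY and IHS varieties via the Bochner principle.

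First I would apply \cref{torus cover} to produce a torus cover $\gamma \from T \x Z \to X$. The reflexive pullback $\gamma^{[*]} \T X$ is canonically identified with $\T{T \x Z} = \mathrm{pr}_T^* \T T \oplus \mathrm{pr}_Z^* \T Z$, and both summands have trivial determinant. Since $\T X$ is strongly stable, $\gamma^{[*]} \T X$ must be stable with respect to any \kahler class on $T \x Z$, so one summand must vanish. The case $\dim Z = 0$ would make $X$ a \qe quotient of a torus, whose tangent sheaf pulls back to the trivial bundle $\O T^{\oplus n}$ and is thus unstable for $n \ge 2$; hence $\dim T = 0$, i.e.~$\wt q(X) = 0$. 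Replacing $X$ by $Z$, I may from now on assume that $\can X \isom \O X$ and $\wt q(X) = 0$, with $\T X$ still strongly stable.

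Next, I would apply \cref{intro:hol} to obtain a further \qe cover $Y \to X$ carrying a decomposition $\T Y = \sF \oplus \bigoplus_{k \in K} \sE_k$. Strong stability is preserved under \qe covers (by composition), so the only admissible scenarios are $\T Y = \sF$ or $\T Y = \sE_k$ for a single index $k$. The first alternative would make $\T{\Reg Y}$ flat, and \cref{flat tangent implies torus} would then force $Y$ to be a torus quotient, in contradiction with $\wt q(Y) = \wt q(X) = 0$ (\cref{wt q invar}). Hence $\T Y = \sE_k$ for a single $k$, and \cref{intro:hol} guarantees that its full holonomy group $G$ is either $\mathrm{SU}(n)$ or $\mathrm{Sp}(n/2)$.

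Finally, I would translate $G$ into the defining property of ICY/IHS. For any \qe cover $\gamma' \from Y' \to Y$, the sheaf $\T{Y'}$ remains strongly stable, and its holonomy is a finite-index subgroup of the connected group $G$, hence equals $G$. By \cref{bochner}, every global reflexive form $\tau \in \HH0.Y'.\Omegar{Y'}p.$ is parallel on $\Reg{Y'}$ with respect to the Ricci-flat metric and therefore corresponds to a $G$-invariant element of $\bigwedge^p (\C^n)\dual$. When $G = \mathrm{SU}(n)$, these invariants vanish in every degree $0 < p < n$, giving $\HH0.Y'.\Omegar{Y'}p. = 0$ and proving that $Y$ is ICY. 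When $G = \mathrm{Sp}(n/2)$, the invariant subalgebra of $\bigwedge^\bullet (\C^n)\dual$ is generated by the tautological symplectic form, which corresponds (via parallel transport and reflexive extension) to a class $\sigma \in \HH0.Y.\Omegar Y2.$ whose pullbacks generate $\HH0.Y'.\Omegar{Y'}\bullet.$; this is precisely the IHS property. The main subtlety is not any single step but the bookkeeping: one must verify that strong stability, the vanishing $\wt q = 0$, and the full holonomy computation of \cref{intro:hol} are all simultaneously available after the chain of \qe covers, which is exactly what the preceding sections of the paper have been engineered to provide.
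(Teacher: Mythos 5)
Your proposal is correct and follows essentially the same route as the paper: reduce to a single summand in the holonomy decomposition \labelcref{decompt} using strong stability, rule out the flat factor via \cref{flat tangent implies torus}, identify the holonomy as $\mathrm{SU}(n)$ or $\mathrm{Sp}(n/2)$ by \cref{corhol}, and conclude with the Bochner principle and invariant theory. The only cosmetic difference is that you first pass through the torus cover to get $\wt q = 0$ before excluding the flat factor, whereas the paper invokes abundance to normalize $\can X$ and then derives the same contradiction directly from strong stability in dimension $\ge 2$.
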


\begin{proof}
By ~\cref{abundance} one can assume that $X$ has trivial canonical bundle and canonical singularities.
Given that $\T X$ is strongly stable, the decomposition~\labelcref{decompt} on a cover $Y\to X$ reduces to a single factor.
If that factor is the flat factor $\sF$, then $X$ is a torus quotient by \cref{flat tangent implies torus}, which is impossible since $\T X$ is strongly stable and $\dim X \ge 2$.
Therefore, we can apply \cref{corhol} to see that the holonomy of $\Reg{Y}$ is either $\mathrm{SU}(n)$ or $\mathrm{Sp}(n/2)$.
By standard results of representation theory of the latter groups, the statement follows from the Bochner principle, \emph{i.e.}~\cref{bochner}.
\end{proof}

\begin{rem}
For the proof of \cref{TX ss structure} above, we do not need the full Bochner principle since we only need to understand reflexive differential forms, \emph{cf.}~\cref{poles}.
The Bochner principle for more general tensors will however be applied in the proof of \cref{chi ne 0 pi1}.
\end{rem}

\section{Fundamental groups} \label{sec pi1}

This section is devoted to obtaining consequences about the fundamental groups of $X$ and of $\Reg X$, where $X$ has vanishing first Chern class.
In a first step, we provide a sufficient criterion for $\pi_1(X)$ to be finite.
This is the analogue of~\cite[Proposition~8.23]{GKP} for \kahler spaces, and as in the projective case the proof is an application of~\cite[Corollary~5.3]{Cam95}.
However, instead of using Miyaoka's Generic Semipositivity Theorem, we rely on the Bochner principle, \cref{bochner}.

\begin{prp}[Finiteness criterion for $\pi_1$] \label{chi ne 0 pi1}
Let $X$ be a compact \kahler space of dimension $n \ge 1$ with klt singularities and $\cc1X = 0$.
Assume moreover that $\chi(X, \O X) \ne 0$.
Then $\pi_1(X)$ is finite, of cardinality
\[ \big| \pi_1(X) \big| \le \frac{2^{n-1}}{|\chi(X, \O X)|}. \] 
\end{prp}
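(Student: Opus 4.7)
The plan is to follow the projective-case strategy of~\cite[Proposition~8.23]{GKP} with Miyaoka's Generic Semipositivity Theorem replaced by the Bochner principle, \cref{bochner}. The idea is to bound the Euler characteristic of every finite étale cover of $X$ via a Bochner-theoretic computation, and then to invoke \cite[Corollary~5.3]{Cam95} to upgrade this boundedness into actual finiteness of $\pi_1(X)$.

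First I reduce to a tractable situation. By \cref{abundance c1=0} combined with \cref{intro:tc} and \cref{intro:hol}, after replacing $X$ by a finite \qe cover (and absorbing the corresponding factor into the final bound on $|\pi_1(X)|$) I may assume that $X$ has canonical singularities with $\can X \isom \O X$ and that
\[
\T X = \sF \oplus \bigoplus_{k \in K} \sE_k,
\]
with $\sF$ flat and each $\sE_k$ of full holonomy $\mathrm{SU}(n_k)$ or $\mathrm{Sp}(n_k/2)$. Under this reduction, $|\chi(X, \O X)|$ changes only by a controlled factor.

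Next I compute $\chi(X, \O X)$ from parallel forms. By \cref{rem KS} and Hodge symmetry on a resolution, $h^i(X, \O X) = h^0(X, \Omegar X i)$ for all $i$. By \cref{bochner}, $h^0(X, \Omegar X i)$ equals the dimension of the holonomy-invariant subspace of $\bigwedge^i T^*_y \Reg X$ at a base point $y \in \Reg X$. Decomposing along $\sF \oplus \bigoplus \sE_k$ and applying standard representation theory of $\mathrm{SU}(n_k)$ and $\mathrm{Sp}(n_k/2)$, the flat factor contributes $\sum_i (-1)^i \binom{\rk \sF}{i}$ to $\chi$, which vanishes as soon as $\rk \sF \ge 1$. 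So $\chi(X, \O X) \ne 0$ forces $\sF = 0$. Each remaining $\sE_k$ then contributes $c_k = 1 + (-1)^{n_k} \in \set{0, 2}$ (in the $\mathrm{SU}(n_k)$ case) or $c_k = n_k/2 + 1$ (in the $\mathrm{Sp}(n_k/2)$ case). The constraint $c_k \ne 0$ combined with the elementary inequality $n_k/2 + 1 \le 2^{n_k/2}$ yields $|\chi(X, \O X)| = \prod_k c_k \le 2^{n/2} \le 2^{n-1}$.

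Applying this computation to every finite étale cover $f \from Y \to X$ of degree $d$, and using multiplicativity $\chi(Y, \O Y) = d \cdot \chi(X, \O X)$ (valid because $f_* \O Y$ is locally free of rank $d$ with vanishing rational Chern classes), I obtain $d \le 2^{n-1}/|\chi(X, \O X)|$. To upgrade this boundedness of finite quotients of $\pi_1(X)$ to finiteness of the group itself, I pass to a log resolution $\wh X \to X$ and use $\pi_1(\wh X) \isom \pi_1(X)$, which holds by \cite[Theorem~1.1]{Takayama2003} thanks to klt (hence rational) singularities. The Bochner principle together with the extension results of~\cite{KS} then furnishes the generic semipositivity input required by~\cite[Corollary~5.3]{Cam95}, yielding finiteness of $\pi_1(\wh X)$ and thus the required cardinality bound. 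The main obstacle I anticipate is cleanly matching the parallel-tensor statement coming from the Bochner principle on $\Reg X$ with the smooth-manifold hypotheses of Campana's criterion; this likely requires working on $\wh X$ with logarithmic differentials along the exceptional divisor, in the spirit of the proof of \cref{lem:virab}.
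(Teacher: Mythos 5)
Your overall skeleton---bound the Euler characteristic of finite \'etale covers via the Bochner principle and Hodge symmetry, then invoke \cite[Corollary~5.3]{Cam95}---is indeed the route the paper intends (it omits the proof, deferring to \cite[Theorem~3.5]{Cam20}). However, two of your intermediate steps are incorrect, and the decisive step is left open. First, the opening reduction via a \qe cover is unsound: $\chi(\cdot,\mathscr O)$ is \emph{not} multiplicative under \qe covers, only under genuinely \'etale ones. For the degree-two \qe cover $T \to X_0$ of a singular Kummer surface $X_0 = T/\{\pm 1\}$ one has $\chi(T,\O T)=0$ while $\chi(X_0,\O{X_0})=2$; in particular such a cover can destroy the hypothesis $\chi(X,\O X)\ne 0$, and there is no ``controlled factor''. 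Second, the assertion that the flat factor contributes $\sum_i(-1)^i\binom{\rk\sF}{i}$ tacitly assumes that the holonomy of $\sF\big|_{\Reg X}$ is trivial; in general it is a nontrivial discrete subgroup of $\mathrm{SU}(\rk\sF)$, the invariants in $\bigwedge^i$ form a proper subspace, and the alternating sum of their dimensions need not vanish (for the binary icosahedral group in $\mathrm{SU}(2)$ it equals $2$). So the inference ``$\chi(X,\O X)\ne0$ forces $\sF=0$'' is unjustified. Fortunately neither step is needed: for \emph{any} compact \kahler space $Y$ with klt singularities and $\cc1Y=0$, the Bochner principle together with \cite{KS} and Hodge symmetry on a resolution gives $h^i(Y,\O Y)=h^0\big(Y,\Omegar Yi\big)\le\binom ni$ directly, whence $|\chi(Y,\O Y)|\le 2^{n-1}$; applied to the \'etale covers of the original $X$, this bounds the index of every finite-index subgroup of $\pi_1(X)$ by $2^{n-1}/|\chi(X,\O X)|$ without any preliminary reduction or holonomy classification.

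The genuine gap is the final step. Bounded finite quotients do not imply finiteness of a group---an infinite group may have no nontrivial finite quotients at all---so the entire weight of the proposition rests on how \cite[Corollary~5.3]{Cam95} is fed. That criterion requires a generic semipositivity (or semistability) hypothesis on the cotangent bundle of the smooth model $\wh X$, which in the projective case is supplied by Miyaoka's theorem and which here must be extracted from the Bochner principle (equivalently, from the polystability of $\T X$ with respect to the singular Ricci-flat metrics) and then transported across the resolution $\wh X \to X$. You explicitly flag this as ``the main obstacle'' and do not resolve it; but this transfer is precisely the mathematical content that distinguishes the proposition from the trivial bound on finite quotients, and as written the proposal establishes only the latter.
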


The proof is completely parallel to the one of~\cite[Theorem~3.5]{Cam20} (for $X$ projective) once we have Bochner's principle at our disposal, and we do not include it here for sake of brevity.

In terms of spaces with strongly stable tangent sheaf (\cref{TX ss structure}), this criterion already applies to all even-dimensional spaces (see below). For odd-dimensional ICYs, we have no results about $\pi_1(X)$ itself, but only about its representation theory (and the one of $\pi_1(\Reg X)$). These results are actually valid for all spaces with vanishing augmented irregularity.

\begin{thm}[Fundamental groups of Ricci flat spaces] \label{pi1 c1=0}
Let $X$ be a compact \kahler space with klt singularities and $\cc1X = 0$.
\begin{enumerate}
\item If $\dim X$ is even and $\T X$ is strongly stable, then $\pi_1(X)$ is finite. If $X$ is IHS or an even-dimensional ICY, then $X$ is even simply connected.
\item If $\wt q(X)=0$, any complex linear representation $\pi_1(X) \to \GL r\C$ has finite image, regardless of the parity of $\dim X$.
\end{enumerate}
\end{thm}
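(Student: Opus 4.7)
The plan is to prove the two items separately, reducing item (1) to \cref{chi ne 0 pi1} and item (2) to \cref{lem:virab}.

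For the IHS/even-dimensional ICY case of item (1), I would begin by computing $\chi(X, \O X)$. Since $X$ has rational singularities, this Euler characteristic equals $\chi(\wh X, \O{\wh X})$ on any resolution, and by Hodge symmetry on the \kahler manifold $\wh X$ combined with the extension result of \cite{KS} recalled in \cref{rem KS}, it further equals $\sum_p (-1)^p \hh0.X.\Omegar Xp..$ Plugging in the ICY/IHS definition yields $\chi = 2$ for an even-dimensional ICY of dimension $n$, and $\chi = n+1$ for an IHS of dimension $2n$. In both cases $\chi \ne 0$, so \cref{chi ne 0 pi1} gives finiteness of $\pi_1(X)$. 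To upgrade this to simple connectedness, I would pass to the finite \'etale universal cover $\wt X \to X$. The ICY and IHS properties are preserved under \qe covers by their very definition, so $\chi(\wt X, \O{\wt X})$ is computed by the same formula and takes the same value as $\chi(X, \O X)$. Since the Euler characteristic of the structure sheaf is multiplicative under \'etale covers, this forces $|\pi_1(X)| \cdot \chi(X, \O X) = \chi(\wt X, \O{\wt X}) = \chi(X, \O X)$, whence $|\pi_1(X)| = 1$.

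For the first sentence of item (1), I would apply \cref{TX ss structure} to produce a \qe cover $X' \to X$ with $X'$ either ICY or IHS. Since the dimension is preserved and $\dim X$ is even, $X'$ falls into one of the two cases just handled, and therefore $\pi_1(X') = 1$. To deduce that $\pi_1(X)$ is finite, I would invoke the standard fact that for a \qe cover of normal spaces the map $\pi_1(X') \to \pi_1(X)$ has image of finite index (at most the degree of the cover): this follows by restricting to the \'etale locus and using that removing a codimension-two analytic subset from a normal space does not affect $\pi_1$.

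For item (2), given a representation $\rho \from \pi_1(X) \to \GL r\C$, I would choose a log resolution $\wh X \to X$ and invoke the Takayama-style result cited in the proof of \cref{prop:holonomy flat factor} to identify $\pi_1(\wh X) \isom \pi_1(X)$. By \cref{abundance c1=0} (or \cite{JHM2}), $\can X$ is torsion, so $\wh X$ has Kodaira dimension zero and is therefore special in Campana's sense \cite[Theorem~5.1]{Campana04}. Then \cite[Theorem~6.5]{CCE15} implies that $\img \rho$ is virtually abelian. Composing $\rho$ with the surjection $\pi_1(\Reg X) \twoheadrightarrow \pi_1(X)$ yields a representation of $\pi_1(\Reg X)$ with the same, still virtually abelian, image; the second part of \cref{lem:virab}, which uses the hypothesis $\wt q(X) = 0$, then upgrades ``virtually abelian'' to ``finite''.

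The main obstacle is the bookkeeping in item (1): one must verify that the IHS and ICY properties propagate through all \qe covers one considers, and confirm that the Hodge-theoretic computation of $\chi$ via \cite{KS} is compatible with multiplicativity of $\chi$ under \'etale covers. Item (2) is more a matter of correctly assembling existing inputs (Takayama, Campana specialness, \cite{CCE15}, and \cref{lem:virab}) than of introducing anything essentially new.
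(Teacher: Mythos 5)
Your item (1) is correct and is essentially the intended argument: the computation of $\chi(X, \O X)$ for ICY/IHS spaces via \cite{KS} and Hodge symmetry on a resolution, the application of \cref{chi ne 0 pi1}, the multiplicativity of $\chi(-,\O{})$ under finite \'etale covers to upgrade finiteness to simple connectedness, and the reduction of the strongly stable case to \cref{TX ss structure}. (The paper gives no proof here beyond citing \cite[Theorem~I]{GGK}, so your write-up is a legitimate fleshing-out of the standard route.) One mis-statement to fix: removing a codimension-two analytic subset from a normal space \emph{can} change $\pi_1$ when that subset meets the singular locus --- this is precisely the $\pi_1(\Reg X)$ versus $\pi_1(X)$ issue the paper emphasizes. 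What you actually need, and what is true, is only \emph{surjectivity} of $\pi_1(U) \to \pi_1(X)$ for $U$ the complement of a closed analytic subset of a normal space; combined with the finite-index statement over the \'etale locus this yields the finite-index image of $\pi_1(X') \to \pi_1(X)$, cf.~\cite[Proposition~1.3]{Campana91}, which the paper invokes for exactly this purpose in the proof of \cref{pi1 dim four}.

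Item (2) contains a genuine gap: from ``$\can X$ is torsion'' you conclude that a resolution $\wh X$ has Kodaira dimension zero, but this fails for klt singularities that are not canonical. Writing $K_{\wh X} = \pi^* K_X + \sum a_i E_i$ with some $a_i < 0$, any pluricanonical section of $\wh X$ would be a constant function vanishing along the corresponding $E_i$, so $\kappa(\wh X) = -\infty$; log Enriques surfaces (which are rational) give concrete examples with $\cc1X = 0$. Hence specialness of $\wh X$ is not automatic and \cite[Theorem~6.5]{CCE15} cannot be applied as stated. Note that the paper's own use of this chain of citations (proof of \cref{prop:holonomy flat factor}) takes place in \cref{setup2}, i.e.\ under the standing assumptions ``canonical singularities and $K_X \sim 0$''. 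The repair is routine: first pass to the global index-one cover $X_1 \to X$, which is \qe, has canonical singularities and $\can{X_1} \isom \O{X_1}$, and satisfies $\wt q(X_1) = \wt q(X) = 0$ by \cref{wt q invar}; run your argument on the representation $\pi_1(X_1) \to \pi_1(X) \to \GL r\C$ (now $\kappa(\wh{X_1}) = 0$, so Takayama, specialness, \cite{CCE15} and \cref{lem:virab} all apply); finally, since $\pi_1(X_1) \to \pi_1(X)$ has finite-index image, finiteness of the image of the restricted representation forces finiteness of $\img \rho$. With this correction item (2) is complete.
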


This first two items in the statement above is a straightforward consequence of \cite[Theorem~I]{GGK} and Bochner's principle. 

Restricting ourselves to dimension four, we have a completely unconditional result:

\begin{thm}[Fundamental groups in dimension four] \label{pi1 dim four}
Let $X$ be a compact \kahler space of dimension $\le 4$ with klt singularities and $\cc1X = 0$.
Then:
\begin{enumerate}
\item\label{pf.1} $\pi_1(X)$ is virtually abelian, \emph{i.e.}~it contains an abelian (normal) subgroup $\Gamma \trianglelefteq \pi_1(X)$ of finite index.
\item\label{pf.2} All finite index abelian subgroups of $\pi_1(X)$ have even rank at most $2 \, \wt q(X) \le 8$.
In particular, if $\wt q(X) = 0$ then $\pi_1(X)$ is finite.
\end{enumerate}
\end{thm}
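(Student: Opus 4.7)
Reduce both items to showing $\pi_1(Z)$ is finite for the $Z$-factor of a torus cover, and derive this from virtual abelianity (obtained in low dimensions) combined with \cref{pi1 c1=0}(2) via an induced-character argument.

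\textbf{Step 1 (Torus-cover reduction).} Apply \cref{intro:tc} to obtain a \qe cover $\gamma\colon T\times Z\to X$, with $T$ a complex torus of dimension $\tilde q(X)$ and $Z$ a compact \kahler space with canonical singularities, $K_Z\sim 0$, $\tilde q(Z)=0$ and $\dim Z\le 4$. Since $\gamma$ is finite and surjective, $\pi_1(T\times Z)=\mathbb Z^{2\tilde q(X)}\times\pi_1(Z)$ has image of finite index in $\pi_1(X)$, so both items of the theorem reduce to the single statement
\[
(\star)\qquad \pi_1(Z)\text{ is finite whenever }\tilde q(Z)=0\text{ and }\dim Z\le 4.
\]

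\textbf{Step 2 (Virtual abelianity $\Rightarrow$ finiteness).} Suppose $\pi_1(Z)$ is virtually abelian, so it contains a finite-index normal abelian subgroup $A\cong\mathbb Z^r\oplus(\text{finite})$. If $r>0$, one may choose a character $A\to\mathbb C^*$ of infinite image; inducing it to $\pi_1(Z)$ yields a finite-dimensional complex representation of $\pi_1(Z)$ of infinite image, which contradicts \cref{pi1 c1=0}(2). Hence $r=0$ and $\pi_1(Z)$ is finite.

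\textbf{Step 3 (Virtual abelianity).} For $\dim Z\le 2$, the classification of canonical \kahler surfaces with trivial canonical class covers $Z$ \qe by a complex $2$-torus or by a K3 surface; the torus is ruled out by $\tilde q(Z)=0$, and K3 is simply connected. For $\dim Z\in\{3,4\}$, apply \cref{intro:hol} to produce, after a further \qe cover, a decomposition $\T Z=\sF\oplus\bigoplus_{k\in K}\sE_k$. The plan is then to first kill the flat factor $\sF$ by adapting the argument of \cref{prop:holonomy flat factor}: in place of a true GKP cover (unavailable in the \kahler setting), use the generic GKP cover of \cref{gkp max strata}, combined with Takayama's theorem $\pi_1(Z)=\pi_1(\widehat Z)$ for a desingularization and the specialness of $\widehat Z$ (which has vanishing Kodaira dimension), so that \cite[Theorem~6.5]{CCE15} brings the monodromy of $\sF$ into a virtually abelian group, and then \cref{lem:virab} forces it to be finite; a further \qe cover trivializes $\sF$, forcing $\rk\sF=0$. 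With $\sF=0$ the remaining dim-$4$ possibilities are either $\T Z$ strongly stable (an ICY$_4$ or IHS$_4$), which gives $\pi_1(Z)$ finite directly from \cref{pi1 c1=0}(1), or a splitting into two rank-$2$ $\mathrm{Sp}(1)$-foliations, which I would integrate via the Bochner principle (\cref{bochner}) applied to the two symplectic forms in order to realize a further \qe cover of $Z$ as a product of two K3 surfaces. In dim $3$ the only remaining possibility is an ICY$_3$ with $\mathrm{SU}(3)$-holonomy, whose virtual abelianity comes from Campana's Abelianity Conjecture for special \kahler manifolds in dim $\le 3$, applied to $\widehat Z$.

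\textbf{Main obstacle.} The technical core of Step 3 is (i) the integration of two rank-$2$ foliations in the split dim-$4$ case and (ii) the passage from the smooth model $\widehat Z$ back to $Z$ in the dim-$3$ ICY$_3$ step. Both are situations where the Decomposition Theorem of \cite{BGL} would give a clean product splitting of $Z$ into pieces with known fundamental groups; since \cite{BGL} is being avoided here, these splittings must instead be reconstructed from the holomorphic tensors produced by the Bochner principle and from a careful bookkeeping of \qe covers preserving $\tilde q=0$.
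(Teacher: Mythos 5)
Your Step 1 matches the paper's reduction, and your Step 2 (virtual abelianity plus \cref{pi1 c1=0}~(2) forces finiteness of $\pi_1(Z)$) is sound as far as it goes. The problem is Step 3, where the two sub-arguments you flag as obstacles are indeed genuine gaps, and they are exactly the points the paper's proof is designed to avoid. First, killing the flat factor by ``adapting'' \cref{prop:holonomy flat factor} to the generic \gkp cover of \cref{gkp max strata} does not work as stated: that proposition needs the full \gkp property in order to extend the flat bundle $\sF\big|_{\Reg X}$ across the singularities, equivalently to factor the monodromy representation through $\pi_1(X)$ (\cref{rem:extension}), and a cover that is \gkp only away from a subset of $\Sing X$ of smaller dimension gives no such factorization. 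The paper in fact records $\rk\sF=\wt q(X)$ as an open conjecture in the \kahler setting, proved only when a true \gkp cover exists or when $\rk\sF=n$. Second, integrating the two rank-two $\mathrm{Sp}(1)$-foliations into a product of K3 surfaces is essentially the splitting step of the Decomposition Theorem, which is precisely what is unavailable without~\cite{BGL}.

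The paper sidesteps all of this. For $\dim Z\le 3$ it simply quotes \cite[Corollary~1.8]{AlgApprox}. For $\dim Z=4$ with $\wt q(Z)=0$ and $\can Z\isom\O Z$ it computes the Euler characteristic: $h^1(Z,\O Z)=0$ by \cref{rem KS}, hence $h^3(Z,\O Z)=0$ and $h^4(Z,\O Z)=1$ by Serre duality, so $\chi(Z,\O Z)=2+h^2(Z,\O Z)\ge 2$, and \cref{chi ne 0 pi1} immediately bounds $|\pi_1(Z)|\le 4$. No holonomy decomposition, no treatment of the flat factor, and no integrability of foliations is needed. You already invoke this mechanism implicitly when you cite \cref{pi1 c1=0}~(1) for the strongly stable sub-case; the point you missed is that the $\chi\neq 0$ criterion applies to \emph{every} even-dimensional $Z$ with $\wt q(Z)=0$ and trivial canonical sheaf, not just the irreducible ones, so the entire case analysis of your Step 3 in dimension four can be deleted.
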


\begin{proof}[Proof of \cref{pi1 dim four}]
Recall from~\cite[Corollary~1.8]{AlgApprox} that all statements are well-known if $\dim X \le 3$.
We can therefore assume for the remainder that $X$ is of dimension four.
Let $\gamma \from T \x Z \to X$ be a torus cover (\cref{torus cover}).
By \emph{e.g.}~\cite[Proposition~1.3]{Campana91}, the image of $\pi_1(T \x Z) = \pi_1(T) \x \pi_1(Z)$ in $\pi_1(X)$ has finite index, and $\pi_1(T)$ is free abelian of rank $2 \, \wt q(X)$.
It is therefore sufficient to show that $\pi_1(Z)$ is finite.

If $\dim Z \le 3$, this holds by~\cite[Corollary~1.8]{AlgApprox} again.
It remains to consider the case where $\dim Z = 4$, \emph{i.e.}~where $X = Z$ and $\can X \isom \O X$ and $\wt q(X) = 0$.
By \cref{rem KS}, the last property implies $\HH1.X.\O X. = 0$ and then also $\HH3.X.\O X. \isom \HH3.X.\can X. = \HH1.X.\O X.\dual = 0$ by Serre duality~\cite[Chapter~VII, Theorem~3.10]{BS76}.
So
\[ \chi(X, \O X) = 1 + \hh2.X.\O X. + \underbrace{\hh4.X.\O X.}_{= 1} \ge 2. \]
By \cref{chi ne 0 pi1}, $\pi_1(X)$ is finite (of cardinality at most $4$).
This settles~\labelcref{pf.1}.

We have already exhibited a finite index abelian subgroup of $\pi_1(X)$ of rank at most $2 \, \wt q(X)$, namely the image of $\pi_1(T) \to \pi_1(X)$.
It is well-known that all such subgroups have the same rank.
Claim~\labelcref{pf.2} follows easily.
\end{proof}

\bibliographystyle{alpha}
\bibliography{biblio,20_Literatur}

\begin{thebibliography}{BCHM10}

\bibitem[Art69]{Artin69}
M.~Artin.
\newblock Algebraic approximation of structures over complete local rings.
\newblock {\em Inst. Hautes \'Etudes Sci. Publ. Math.}, (36):23--58, 1969.

\bibitem[BCHM10]{BCHM10}
Caucher Birkar, Paolo Cascini, Christopher~D. Hacon, and James McKernan.
\newblock Existence of minimal models for varieties of log general type.
\newblock {\em J.~Amer.~Math.~Soc.}, 23:405--468, 2010.

\bibitem[Bea83]{Bea83}
Arnaud Beauville.
\newblock Vari\'{e}t\'{e}s {K}\"{a}hleriennes dont la premi\`{e}re classe de
  {C}hern est nulle.
\newblock {\em J. Differential Geom.}, 18(4):755--782 (1984), 1983.

\bibitem[BEG13]{BEG}
S\'ebastien Boucksom, Philippe Eyssidieux, and Vincent Guedj.
\newblock Introduction.
\newblock In {\em An introduction to the {K}\"ahler-{R}icci flow}, volume 2086
  of {\em Lecture Notes in Math.}, pages 1--6. Springer, Cham, 2013.

\bibitem[BGL20]{BGL}
Benjamin Bakker, Henri Guenancia, and Christian Lehn.
\newblock {Algebraic approximation and the decomposition theorem for K\"{a}hler
  Calabi-Yau varieties}.
\newblock \href{http://arxiv.org/abs/2012.00441}{arXiv:2012.00441 [math.AG]},
  2020.

\bibitem[BS76]{BS76}
Constantin B{\u a}nic{\u a} and Octavian St{\u a}n{\u a}{\c s}il{\u a}.
\newblock {\em {Algebraic Methods in the Global Theory of Complex Spaces}}.
\newblock John Wiley \& Sons, 1976.

\bibitem[Cam91]{Campana91}
F.~Campana.
\newblock On twistor spaces of the class {$\mathscr C$}.
\newblock {\em J. Differential Geom.}, 33(2):541--549, 1991.

\bibitem[Cam95]{Cam95}
Fr{\'e}d{\'e}ric Campana.
\newblock Fundamental group and positivity of cotangent bundles of compact
  {K}\"{a}hler manifolds.
\newblock {\em J. Algebraic Geom.}, 4(3):487--502, 1995.

\bibitem[Cam04]{Campana04}
Fr\'{e}d\'{e}ric Campana.
\newblock Orbifolds, special varieties and classification theory.
\newblock {\em Ann. Inst. Fourier (Grenoble)}, 54(3):499--630, 2004.

\bibitem[Cam21]{Cam20}
Fr\'{e}d\'{e}ric Campana.
\newblock The {B}ogomolov-{B}eauville-{Y}au decomposition for {KLT} projective
  varieties with trivial first {C}hern class---without tears.
\newblock {\em Bull. Soc. Math. France}, 149(1):1--13, 2021.

\bibitem[Car57]{Cartan57}
Henri Cartan.
\newblock Quotient d'un espace analytique par un groupe d'automorphismes.
\newblock In {\em Algebraic geometry and topology}, pages 90--102. Princeton
  University Press, Princeton, N.~J., 1957.
\newblock A symposium in honor of S.~Lefschetz.

\bibitem[CCE15]{CCE15}
Fr\'{e}deric Campana, Beno\^{\i}t Claudon, and Philippe Eyssidieux.
\newblock Repr\'{e}sentations lin\'{e}aires des groupes k\"{a}hl\'{e}riens:
  factorisations et conjecture de {S}hafarevich lin\'{e}aire.
\newblock {\em Compos. Math.}, 151(2):351--376, 2015.

\bibitem[CGP19]{JHM2}
Junyan Cao, Henri Guenancia, and Mihai P{\u{a}}un.
\newblock {{Variation of singular K{\"a}hler-Einstein metrics: Kodaira
  dimension zero}}.
\newblock Preprint \href{http://arxiv.org/abs/1908.08087}{arXiv:1908.08087}, to
  appear in J. Eur. Math. Soc., 2019.

\bibitem[Che55]{Chevalley55}
Claude Chevalley.
\newblock Invariants of finite groups generated by reflections.
\newblock {\em Amer. J. Math.}, 77:778--782, 1955.

\bibitem[Dem85]{Dem85}
Jean-Pierre Demailly.
\newblock Mesures de {M}onge-{A}mp{\`e}re et caract{\'e}risation
  g{\'e}om{\'e}trique des vari{\'e}t{\'e}s alg{\'e}briques affines.
\newblock {\em M{\'e}m. Soc. Math. France (N.S.)}, (19):124, 1985.

\bibitem[DG94]{DG94}
G.~Dethloff and H.~Grauert.
\newblock Seminormal complex spaces.
\newblock In {\em Several complex variables, {VII}}, volume~74 of {\em
  Encyclopaedia Math. Sci.}, pages 183--220. Springer, Berlin, 1994.

\bibitem[Dru14]{DruelZL}
St{\'e}phane Druel.
\newblock The {Z}ariski-{L}ipman conjecture for log canonical spaces.
\newblock {\em Bull. Lond. Math. Soc.}, 46(4):827--835, 2014.

\bibitem[Dru18]{Dru16}
St\'ephane Druel.
\newblock A decomposition theorem for singular spaces with trivial canonical
  class of dimension at most five.
\newblock {\em Invent. Math.}, 211(1):245--296, 2018.

\bibitem[EGZ09]{EGZ}
Philippe Eyssidieux, Vincent Guedj, and Ahmed Zeriahi.
\newblock {Singular K{\"a}hler-Einstein metrics}.
\newblock {\em {J. Amer. Math. Soc.}}, 22:607--639, 2009.

\bibitem[Fuj78]{Fuj78}
Akira Fujiki.
\newblock {On Automorphism Groups of Compact K{\"a}hler Manifolds}.
\newblock {\em Invent.~Math.}, 44:225--258, 1978.

\bibitem[GGK19]{GGK}
Daniel Greb, Henri Guenancia, and Stefan Kebekus.
\newblock Klt varieties with trivial canonical class: holonomy, differential
  forms, and fundamental groups.
\newblock {\em Geom. Topol.}, 23:2051--2124, 2019.

\bibitem[GK14]{GK13}
Patrick Graf and S{\'a}ndor~J. Kov{\'a}cs.
\newblock {An optimal extension theorem for $1$-forms and the
  {L}ipman--{Z}ariski Conjecture}.
\newblock {\em Documenta Math.}, 19:815--830, 2014.

\bibitem[GK20]{TorusQuotients}
Patrick Graf and Tim Kirschner.
\newblock {Finite quotients of three-dimensional complex tori}.
\newblock {\em Ann.~Inst.~Fourier (Grenoble)}, 70(2):881--914, 2020.

\bibitem[GKKP11]{GKKP11}
Daniel Greb, Stefan Kebekus, S{\'a}ndor~J. Kov{\'a}cs, and {\relax Th}omas
  Peternell.
\newblock Differential forms on log canonical spaces.
\newblock {\em Publications Math{\'e}matiques de L'IH{\'E}S}, 114:1--83, 2011.

\bibitem[GKP16a]{GKP16}
Daniel Greb, Stefan Kebekus, and Thomas Peternell.
\newblock \'{E}tale fundamental groups of {K}awamata log terminal spaces, flat
  sheaves, and quotients of abelian varieties.
\newblock {\em Duke Math. J.}, 165(10):1965--2004, 2016.

\bibitem[GKP16b]{GKP}
Daniel Greb, Stefan Kebekus, and Thomas Peternell.
\newblock Singular spaces with trivial canonical class.
\newblock In {\em Minimal Models and Extremal Rays, Kyoto, 2011}, volume~70 of
  {\em Adv.~Stud.~Pure Math.}, pages 67--113. Mathematical Society of Japan,
  Tokyo, 2016.

\bibitem[GM88]{GMbook}
Mark Goresky and Robert MacPherson.
\newblock {\em Stratified {M}orse theory}, volume~14 of {\em Ergebnisse der
  Mathematik und ihrer Grenzgebiete (3) [Results in Mathematics and Related
  Areas (3)]}.
\newblock Springer-Verlag, Berlin, 1988.

\bibitem[GR84]{CAS}
Hans Grauert and Reinhold Remmert.
\newblock {\em Coherent analytic sheaves}, volume 265 of {\em Grundlehren der
  Mathematischen Wissenschaften [Fundamental Principles of Mathematical
  Sciences]}.
\newblock Springer-Verlag, Berlin, 1984.

\bibitem[Gra18]{AlgApprox}
Patrick Graf.
\newblock {Algebraic approximation of K{\"a}hler threefolds of Kodaira
  dimension zero}.
\newblock {\em Math.~Annalen}, 371:487--516, 2018.

\bibitem[Gue16]{GSS}
Henri Guenancia.
\newblock {Semistability of the tangent sheaf of singular varieties}.
\newblock {\em Algebraic Geometry}, 3(5):508--542, November 2016.

\bibitem[HP19]{HP}
Andreas H\"{o}ring and Thomas Peternell.
\newblock Algebraic integrability of foliations with numerically trivial
  canonical bundle.
\newblock {\em Invent. Math.}, 216(2):395--419, 2019.

\bibitem[Kaw81]{Kaw81}
Yujiro Kawamata.
\newblock Characterization of abelian varieties.
\newblock {\em Compos.~Math.}, 43(2):253--276, 1981.

\bibitem[Kaw85]{Kaw85}
Yujiro Kawamata.
\newblock {Minimal models and the Kodaira dimension of algebraic fiber spaces}.
\newblock {\em J.~reine angew.~Math.}, 363:1--46, 1985.

\bibitem[KM98]{KM98}
J{\'a}nos Koll{\'a}r and Shigefumi Mori.
\newblock {\em Birational Geometry of Algebraic Varieties}, volume 134 of {\em
  Cambridge Tracts in Mathematics}.
\newblock Cambridge University Press, Cambridge, 1998.

\bibitem[Kol93]{Kol93}
J{\'a}nos Koll{\'a}r.
\newblock {Shafarevich maps and plurigenera of algebraic varieties}.
\newblock {\em Invent.~Math.}, 113:177--215, 1993.

\bibitem[Kol07]{Kol07}
J{\'a}nos Koll{\'a}r.
\newblock {\em {Lectures on resolution of singularities}}, volume 166 of {\em
  Annals of Mathematics Studies}.
\newblock Princeton University Press, Princeton, NJ, 2007.

\bibitem[KS21]{KS}
Stefan Kebekus and Christian Schnell.
\newblock Extending holomorphic forms from the regular locus of a complex space
  to a resolution of singularities.
\newblock {\em J. Amer. Math. Soc.}, 34(2):315--368, 2021.

\bibitem[Mat73]{Mather73}
John~N. Mather.
\newblock Stratifications and mappings.
\newblock In {\em Dynamical systems ({P}roc. {S}ympos., {U}niv. {B}ahia,
  {S}alvador, 1971)}, pages 195--232, 1973.

\bibitem[Mat12]{Mather70}
John Mather.
\newblock Notes on topological stability.
\newblock {\em Bull. Amer. Math. Soc. (N.S.)}, 49(4):475--506, 2012.

\bibitem[P{\u{a}}u08]{Paun}
Mihai P{\u{a}}un.
\newblock {Regularity properties of the degenerate Monge-Amp{\`e}re equations
  on compact K{\"a}hler manifolds.}
\newblock {\em Chin. Ann. Math., Ser. B}, 29(6):623--630, 2008.

\bibitem[ST54]{ShephardTodd54}
G.~C. Shephard and J.~A. Todd.
\newblock Finite unitary reflection groups.
\newblock {\em Canad. J. Math.}, 6:274--304, 1954.

\bibitem[Tak03]{Takayama2003}
Shigeharu Takayama.
\newblock Local simple connectedness of resolutions of log-terminal
  singularities.
\newblock {\em Internat. J. Math.}, 14(8):825--836, 2003.

\bibitem[Tia00]{Tian00}
Gang Tian.
\newblock {\em Canonical metrics in {K}\"{a}hler geometry}.
\newblock Lectures in Mathematics ETH Z\"{u}rich. Birkh\"{a}user Verlag, Basel,
  2000.
\newblock Notes taken by Meike Akveld.

\bibitem[Tou68]{Tou68}
Jean-Claude Tougeron.
\newblock Id\'{e}aux de fonctions diff\'{e}rentiables. {I}.
\newblock {\em Ann. Inst. Fourier (Grenoble)}, 18(fasc., fasc. 1):177--240,
  1968.

\bibitem[TT81]{LeTeissier}
L\^{e}~D{\~u}ng Tr\'{a}ng and Bernard Teissier.
\newblock Vari\'{e}t\'{e}s polaires locales et classes de {C}hern des
  vari\'{e}t\'{e}s singuli\`eres.
\newblock {\em Ann. of Math. (2)}, 114(3):457--491, 1981.

\bibitem[Var89]{Var}
Jean Varouchas.
\newblock {K{\"a}hler spaces and proper open morphisms.}
\newblock {\em Math. Ann.}, 283(1):13--52, 1989.

\bibitem[Voi02]{Voi02}
Claire Voisin.
\newblock {\em {Hodge Theory and Complex Algebraic Geometry I}}, volume~76 of
  {\em Cambridge Studies in Advanced Mathematics}.
\newblock Cambridge University Press, 2002.

\bibitem[Wan16]{WangB}
Botong Wang.
\newblock Torsion points on the cohomology jump loci of compact {K}\"ahler
  manifolds.
\newblock {\em Math. Res. Lett.}, 23(2):545--563, 2016.

\bibitem[Xu14]{Xu14}
Chenyang Xu.
\newblock Finiteness of algebraic fundamental groups.
\newblock {\em Compos. Math.}, 150(3):409--414, 2014.

\bibitem[Yau78]{Yau78}
Shing-Tung Yau.
\newblock On the {R}icci curvature of a compact {K}{\"a}hler manifold and the
  complex {M}onge-{A}mp{\`e}re equation. {I}.
\newblock {\em Comm. Pure Appl. Math.}, 31(3):339--411, 1978.

\end{thebibliography}

\end{document}